\def\imod#1{\allowbreak\mkern10mu({\operator@font mod}\,\,#1)}
\renewcommand\section{\@startsection{section}{1}{\z@}%
                                  {-3.5ex \@plus -1ex \@minus-.2ex}%
                                  {2.3ex \@plus.2ex}%
                                  {\center\normalfont\large\bfseries}}
\theoremstyle{plain}
\newtheorem{Conj}{Conjecture}[section]
\newtheorem{Thm}[Conj]{Theorem}
\newtheorem{Cor}[Conj]{Corollary}
\newtheorem{Prop}[Conj]{Proposition}
\newtheorem{Lem}[Conj]{Lemma}
\theoremstyle{definition}
\newtheorem{Remark}[Conj]{Remark}
\numberwithin{equation}{section}
\newcommand{\integral}[1]{\int\limits_{ {#1}\left(F\right)\backslash {#1}\left(\A\right)}}
\newcommand{\E}{\mathcal E}
\newcommand{\C}{\mathbb C}
\newcommand{\bH}{\mathbb H}
\newcommand{\A}{\mathbb A}
\newcommand{\G}{\mathbb G}
\newcommand{\mA}{\mathcal A} 
\newcommand{\Sch}{\mathcal S}  
\newcommand{\Z}{\mathbb Z}
\newcommand\ind{\operatorname{ind}}
\newcommand\Ind{\operatorname{Ind}}
\newcommand\Hom{\operatorname{Hom}}
\newcommand\Res{\operatorname{Res}}
\newcommand{\TSL}{{\widetilde{SL_2}}}
\newcommand{\diag}{\operatorname{diag}}
\DeclareMathOperator{\zfun}{\zeta} 
\DeclareMathOperator{\zint}{\mathcal{Z}} 
\DeclareMathOperator{\Lfun}{\mathcal{L}} 
\newcommand{\coset}[1]{\left[ #1 \right]}  
\newcommand{\FNorm}[1]{\left\vert #1 \right\vert} 
\newcommand{\Nm}{\operatorname{Nm}}
\newcommand{\R}{\mathbb{R}}
\newcommand{\bk}[1]{\left(#1\right)} 
\newcommand{\bm}{\begin{multline*}}
\newcommand{\tu}{\end{multline*}}
\DeclareMathOperator{\Id}{\mathbbm{1}} 
\newcommand{\Gm}{\mathbb{G}_m} 
\newcommand{\modf}[1]{\mathcal{\delta}_{#1}} 
\renewcommand{\check}[1]{#1 ^{\vee}} 
\newcommand{\piece}[1]{\left\{\begin{matrix} #1 \end{matrix}\right.} 
\newcommand{\set}[1]{\left\{ #1 \right\}} 
\newcommand{\mvert}{\mathrel{}\middle\vert\mathrel{}} 
\newcommand{\res}[1]{\Bigg\vert_{#1}}
\newcommand{\Eisen}{\mathcal{E}}
\newcommand{\lmod}{\backslash}
\newcommand{\rmod}{/}
\newcommand{\gen}[1]{\left< #1 \right>}
\newcommand{\intl}{\, \int\limits}
\newcommand{\suml}{\, \sum\limits}
\newcommand{\prodl}{\, \prod\limits}
\theoremstyle{plain}
\DeclareMathAlphabet{\mathpzc}{OT1}{pzc}{m}{it}
\title[The  Standard $\Lfun$-function of $G_2$ and the Rallis-Schiffmann lift]{Poles of the  Standard $\Lfun$-function of $G_2$ and the Rallis-Schiffmann lift}
\author[Nadya Gurevich and Avner Segal]{Nadya Gurevich${^{1}}$ and Avner Segal${^{2}}$}
\address{${^1}$ School of Mathematics, Ben Gurion University of the Negev, POB 653, Be'er Sheva 84105, Israel}
\address{${^2}$ School of Mathematical Sciences, Tel Aviv University, Tel Aviv 69978, Israel}
\email{ngur@math.bgu.ac.il, avners@post.tau.ac.il}
\numberwithin{equation}{section}
\begin{document}

\begin{abstract}
We characterize the cuspidal representations of $G_2$ whose standard $\Lfun$-function admits a pole at $s=2$ as the image of Rallis-Schiffmann lift for the commuting pair $\bk{\widetilde{SL_2}, G_2}$ in $\widetilde{Sp_{14}}$. 
The image consists of non-tempered representations.
The main tool is the recent construction, by the second author, of a family of Rankin-Selberg integrals representing  the standard $\Lfun$-function.
%
\end{abstract}

\maketitle

\begin{center}
Mathematics Subject Classification: 11F70 (11F27, 11F66)
\end{center}

\tableofcontents

\section{Introduction}

Let $G$ be a reductive algebraic group defined over a number field $F$.
In the theory of automorphic forms, for any automorphic irreducible representation $\pi=\otimes_\nu \pi_\nu$ of $G(\A)$ and a finite dimensional representation $\rho$ of the dual Langlands group
$^LG$ one can associate a partial $\Lfun$-function $\Lfun^S\bk{s,\pi,\rho}$, where $S$ is a finite set of places of the number field $F$ outside of which $\pi_\nu$ is unramified. 
It is defined by 
\[
\Lfun^S\bk{s,\pi,\rho}=\prodl_{\nu\notin S} \bk{\det\bk{I-\rho\bk{t_{\pi_\nu}}{q_\nu}^{-s}}}^{-1},
\]
where $t_{\pi_\nu}$ is a representative of the Satake conjugacy class associated to $\pi_\nu$ and $q_\nu$ is the order of the residue field of $F_\nu$.
Conjecturally, all such $\Lfun$-functions admit meromorphic continuation.
The poles of the $\Lfun$-functions are of special interest since the images of functorial lifts can often be characterized in terms of these poles.

\subsection{Poles of $\Lfun$-Functions of Non-Tempered Representations}

Let $\pi=\otimes_\nu\pi_\nu$ be an irreducible automorphic representation with unitary central character contained in the discrete $L^2$ spectrum of $G$.

If the representation $\pi$ is tempered then the eigenvalues of $\rho(t_{\pi_\nu})$ have complex norm $1$ and hence the partial $\Lfun$-function is holomorphic for $Re\bk{s}>1$.
The pole at $s=1$ can occur.

Assume that there exists a reductive group $H$ and an automorphic representation 
$\sigma$ of $H(\A)$ such that:
\begin{enumerate}
\item
There is a map $r: {^LH}\rightarrow {^LG}$ such that $\rho\circ r $, as a representation of $^LH$, contains the trivial representation with some positive multiplicity $n$.
In other words $\rho=n\cdot 1\oplus \rho'$ for some representation $\rho'$ of $^LH$.
\item 
Outside a finite set of places it holds that $r(t_{\sigma_\nu})=t_{\pi_\nu}$. 
\end{enumerate}
In this case, $\pi$ is called a {\sl weak lift} from $\sigma$ of $H(\A)$.
Then $\Lfun^S\bk{s,\pi,\rho}=\bk{\zeta^S\bk{s}}^n \Lfun^S\bk{s,\pi,\rho'}$ is expected to have a pole of order $n$ at $s=1$ since ,by the generalized Riemann hypothesis (GRH), the term $\Lfun^S\bk{1,\pi,\rho'}$ is non-zero.
The expectation is that the converse is also true.
It has been verified in many cases such as \cite{MR1835288} and \cite{MR1454260}.

If $\pi$ is non-tempered, the poles of the partial $\Lfun$-function can be expected at real $s$ larger than $1$.
The source of the poles is just the same as in the tempered case.
The poles come from shifted Riemann zeta functions. 
Motivated by Arthur's conjectures, the expectations are as follows.

Assume that there exists a reductive group $H$ and an automorphic tempered representation $\sigma$ of $H(\A)$ such that:
\begin{enumerate}
\item
There is a map $r: {^LH}\times SL_2(\C)\rightarrow {^LG}$ such that $\rho\circ r$ restricted to $^LH$  contains the trivial representation.
We can write $\rho\circ r=\oplus(\rho_j\otimes Sym^j)$ and let $k$ be the maximal index such that $\rho_k$ contains the trivial representation.
Denote its multiplicity in $\rho_k$ by $n$.
 
\item
$\pi$ is a weak lift of the representation $\sigma\boxtimes 1$ of $H(\A)\times SO(2,1)(\A)$
with respect to the map $\rho\circ r$.
\end{enumerate}
Then the partial $\Lfun$-function $\Lfun^S\bk{s,\pi,\rho}$ is expected to have a pole at $s=\frac{k}{2}+1$ of order $n$. 

Indeed,
\[
\Lfun^S\bk{s,\pi,\rho}=\prodl_j \Lfun^S\bk{s,\sigma\boxtimes 1,\rho_j\otimes Sym^j} .
\]
Each factor can be written as a product of shifted partial $\Lfun$-functions of $\sigma$, namely
\[
\Lfun^S\bk{s,\sigma\boxtimes 1, \rho_j\otimes Sym^j}= \prodl_{l=0}^j \Lfun^S\bk{s+j/2-l,\sigma, \rho_j} .
\]

In particular 
\[
\Lfun^S\bk{s,\sigma\boxtimes 1, 1\otimes Sym^k}=\prodl_{l=0}^k \zeta^S\bk{s+\frac{k}{2}-l} .
\]

The partial zeta function $\zeta^S(s)$ has a pole at $s=1$ and, for $\FNorm{S}>1$, a zero of order $\FNorm{S}-1$ at $s=0$.
The term $\Lfun^S\bk{s,\sigma\boxtimes 1, 1\boxtimes Sym^k}$ admits a pole at $s=\frac{k}{2}+1$.
It cannot be canceled neither by other zeta factors, by the maximality of $k$, nor by other terms, by GRH.
Thus $\Lfun^S\bk{s,\pi,\rho}$ is expected to have a pole of order $n$ at $s=\frac{k}{2}+1$.

Again, it is expected that the converse is true, namely that the existence of a real pole $s>1$ of $\Lfun^S\bk{s,\pi,\rho}$ will imply the existence of a group $H$ and a representation $\sigma$ as above.
This was also proved in various cases, such as \cite{MR965059}.

The proof of the above expectation requires both a proof of the existence of the weak functorial lift and some information on the poles of the $\Lfun$-function.

The existence of the functorial lift can be proved by providing an explicit construction,
such as theta lift or any of its variations.

Information on the poles of $\Lfun$-function can usually be obtained using a Rankin-Selberg integral representation.
We demonstrate how the expectations come true in the following setting: 
\begin{itemize}
\item $G$ is the exceptional split group of type $G_2$, so that $\bk{^LG}^0=G_2\bk{\C}$.
\item $\rho=st: G_2\bk{\C}\rightarrow GL_7\bk{\C}$ is the standard representation.
\item $H=SO\bk{2,1}$ is the split special orthogonal group of rank $1$, so that $\bk{^LH}^0=SL_2\bk{\C}$.
\item $r:SL_2(\C)\times SL_2(\C)\rightarrow G_2(\C)$ is the map described below in \Cref{functoriality}.
\end{itemize}

The explicit construction of the weak lift above is fully realized using the Rallis-Schiffmann lift
that will be described below.

\subsection{The Rallis-Schiffmann Lift}

A remarkable construction of a class of cuspidal non-tempered representations of the exceptional group $G_2$ has been obtained by Rallis and Schiffmann in \cite{MR1020830}.
The pair $\bk{SL_2,G_2}$ is not a dual pair, but merely a commuting pair inside $Sp_{14}$.
Indeed, the centralizer of a certain embedding of $SL_2$ in $Sp_{14}$ is the group $SO_7\times \{\pm 1\}$ and $G_2$ is naturally embedded in the split special orthogonal group $SO_7$.

Let $\psi$ be a fixed additive complex character of $F\backslash \A$.
For any cuspidal representation $\sigma$ of the metaplectic cover $\TSL$, its theta lift 
$\theta_\psi(\sigma)$ to $SO_7$ is a non-zero and non-cuspidal automorphic representation.
However, the restriction of functions in $\theta_\psi(\sigma)$ to the subgroup $G_2(\A)$ of $SO_7(\A)$ 
defines a square-integrable automorphic representation of $G_2(\A)$.
This representation of $G_2\bk{\A}$ is cuspidal whenever the theta lift of $\sigma$ to $SO\bk{2,1}$ with respect to $\psi$ is zero.
This lift is denoted by $RS_\psi(\sigma)$.

It is not difficult to extend the definition of the lift from the cuspidal spectrum to all the summands of the discrete spectrum of $\TSL$.
This is done in \cite[Section 12.7]{MR2262172}.
 
The lift $RS_\psi(\sigma)$ is not necessarily irreducible.
The question of reducibility has been studied in \cite{MR2262172} by complete determination of the local lift.
The local lift is used in order to define certain non-tempered A-packets on $G_2$ and the global lift is used to prove Arthur's multiplicity formula for these packets in most cases. 

The construction is proven to be functorial. 
Although the group $\TSL$ is not algebraic, the Satake parameters of an irreducible automorphic $\sigma$
of $\TSL(\A)$ are defined via the Waldspurger map $Wd_{\psi}$ that associates to every square integrable irreducible automorphic representation $\sigma$ of $\TSL\bk{\A}$ an irreducible square integrable representation $\tau=Wd_\psi\bk{\sigma}$ of $SO\bk{2,1}\bk{\A}$.
The map is finite-to-one.


Thus, whenever $\sigma_\nu$ is spherical, the Satake parameter $t_{\psi,\sigma_\nu}$ is defined 
to be the Satake parameter of $Wd_{\psi_\nu}\bk{\sigma_\nu}$.
Note the dependence on the character $\psi$.

Recall that there are four conjugacy classes of homomorphisms $SL_2\bk{\C}\rightarrow G_2\bk{\C}$ corresponding to four nontrivial unipotent conjugacy classes in $G_2\bk{\C}$.

Consider the map
\begin{equation}
\label{functoriality}
r:SL_2\bk{\C}\times SL_2\bk{\C}\rightarrow G_2\bk{\C},
\end{equation}
where the map restricted to the first copy of $SL_2\bk{\C}$  corresponds to the unipotent conjugacy class generated by a long root and the map restricted to the second copy of $SL_2\bk{\C}$ corresponds to the unipotent conjugacy class generated by a short root of $G_2$.

\begin{Prop}
[\cite{MR1020830}]
Let $\sigma=\otimes_\nu \sigma_\nu$ be a cuspidal representation of $\TSL\bk{\A}$.
Let  $\pi=\otimes_\nu \pi_\nu$ be an irreducible summand of $RS_\psi\bk{\sigma}$. 
For all $\nu$ such that $\sigma_\nu,\pi_\nu$ are unramified, denote by $t_{\psi,\sigma_\nu}$ the Satake parameter of $\sigma_\nu$  and by $t_{\pi_\nu}$ the Satake parameter of $\pi_\nu$, which is a semisimple conjugacy class in $G_2\bk{\C}$.
Then  
\begin{equation}
r\bk{t_{\psi,\sigma_\nu}, \diag\bk{q_\nu^{1/2},q_\nu^{-1/2}}}=t_{\pi_\nu}.
\end{equation}
\end{Prop}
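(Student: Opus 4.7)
The assertion concerns Satake parameters at unramified places, so fix a finite place $\nu\notin S$ at which $\sigma_\nu$, $\pi_\nu$ and $\psi_\nu$ are all unramified. Since Satake parameters classify unramified representations up to isomorphism, it suffices to establish the equality of semisimple conjugacy classes in $G_2\bk{\C}$. The strategy is to compute both sides by passing through the larger ambient group $SO_7$, where the Weil-representation definition of the Rallis--Schiffmann lift is directly accessible.

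The first step is the explicit unramified local theta lift for the dual pair $\bk{\TSL,SO_7}$. Realising the Weil representation of $\widetilde{Sp_{14}}\bk{F_\nu}$ on a Schr\"odinger model adapted to the pair and evaluating the action of the spherical Hecke algebras of $\TSL\bk{F_\nu}$ and $SO_7\bk{F_\nu}$ on the Gaussian spherical vector --- or equivalently invoking the known formula for the unramified metaplectic--orthogonal correspondence for the pair $\bk{\TSL,SO_{2n+1}}$ --- one obtains the Satake parameter of $\theta_\psi\bk{\sigma_\nu}$ in $Sp_6\bk{\C}=\widehat{SO_7}$ as an explicit function of $t_{\psi,\sigma_\nu}$ and $q_\nu$. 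This is the point at which the dependence on $\psi$ enters, through the Waldspurger map identifying the $\TSL$-Satake parameter with an element of $SL_2\bk{\C}=\widehat{SO\bk{2,1}}$.

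The second step transfers this $SO_7$-parameter down to $G_2$. Since $\pi_\nu$ is a spherical subquotient of $\theta_\psi\bk{\sigma_\nu}|_{G_2\bk{F_\nu}}$, and both representations are determined by unramified characters of their respective Borel subgroups, the inducing character of $\pi_\nu$ must be the restriction, up to the Weyl group, of that of $\theta_\psi\bk{\sigma_\nu}$ to the common maximal torus $T_{G_2}\subset T_{SO_7}$. Dually, this corresponds to applying the natural surjection $T_{Sp_6\bk{\C}}\twoheadrightarrow T_{G_2\bk{\C}}$ arising from the inclusion of cocharacter lattices $X_*\bk{T_{G_2}}\hookrightarrow X_*\bk{T_{SO_7}}$. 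One then verifies that the image of the $SO_7$-parameter under this map coincides with $r\bk{t_{\psi,\sigma_\nu},\diag\bk{q_\nu^{1/2},q_\nu^{-1/2}}}$, using the explicit description of the long-root and short-root embeddings $SL_2\bk{\C}\hookrightarrow G_2\bk{\C}$ in terms of coroots.

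The main obstacle is the first step: carrying out the unramified theta-lift computation for the metaplectic pair $\bk{\TSL,SO_7}$ and correctly tracking the $\psi$-dependence through Waldspurger's parametrization, in particular identifying which $SL_2\bk{\C}$-conjugacy class realises $t_{\psi,\sigma_\nu}$ under the metaplectic Satake isomorphism. Once that formula is in hand, the descent through $T_{Sp_6\bk{\C}}\twoheadrightarrow T_{G_2\bk{\C}}$ and the final comparison with the long/short-root description of $r$ is a direct finite calculation on cocharacter lattices.
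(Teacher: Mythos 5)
The paper does not actually supply a proof of this proposition: it is quoted from Rallis--Schiffmann \cite{MR1020830} and used as a black box. So there is no internal argument to compare your proposal against; I will assess it on its own merits.

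Your Step 1 is sound in outline: the unramified Howe correspondence for the dual pair $(\TSL,SO_7)$ is available and, once the $\psi$-normalization of the Waldspurger parametrization is tracked, gives the Satake parameter of $\theta_\psi(\sigma_\nu)$ in $Sp_6(\C)$. The gap is in Step 2. You assert that since $\pi_\nu$ is a spherical subquotient of $\theta_\psi(\sigma_\nu)\big|_{G_2(F_\nu)}$, its Satake parameter is obtained by pushing the $Sp_6(\C)$-parameter through the surjection $\widehat{T}_{SO_7}\twoheadrightarrow\widehat{T}_{G_2}$ dual to $T_{G_2}\hookrightarrow T_{SO_7}$. This is not true in general: restricting an irreducible spherical representation of $SO_7(F_\nu)$ to $G_2(F_\nu)$ produces a direct integral whose spherical constituents occupy a family of Satake parameters, not a single one, and the purely lattice-theoretic ``restriction'' of the inducing character does not single out which constituent the global construction actually produces. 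Put differently, the Satake parameter is not functorial under restriction to a reductive subgroup, so ``a direct finite calculation on cocharacter lattices'' cannot by itself identify $t_{\pi_\nu}$.

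To close the gap one must use more of the specific structure, along one of two routes. Either exploit that $\theta_\psi(\sigma_\nu)$ is a highly degenerate representation of $SO_7(F_\nu)$ (a Langlands quotient of a degenerate principal series attached to a maximal parabolic), so that its restriction to $G_2(F_\nu)$ can be analyzed explicitly and its unique spherical constituent pinned down; or, as Rallis--Schiffmann do, bypass the $SO_7$ intermediary at the level of representations and compute the action of the spherical Hecke algebra of $G_2(F_\nu)$ directly on the $G_2(\mathcal{O}_\nu)\times\TSL(\mathcal{O}_\nu)$-fixed vector in the local Weil representation in a Schr\"odinger model, reading off the eigenvalues and hence $t_{\pi_\nu}$. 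Either way, the input you need is the Hecke action on the Weil representation under the commuting pair $(\TSL,G_2)$, not merely the $SO_7$-branching; your write-up should make explicit which of these you intend to carry out and why it determines the parameter uniquely.
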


In particular, $\pi$ is a weak functorial lift of the automorphic representation $Wd_\psi\bk{\sigma}\boxtimes 1$ of $SO\bk{2,1}\times SO\bk{2,1}$ with respect to the map $r$. 

\subsection{Near Equivalence Classes}
Let $\sigma$ be an irreducible automorphic representation of $\TSL(\A)$ contained in the square integrable spectrum.
The near equivalence classes of the representation $RS_\psi\bk{\sigma}$ have been studied in \cite{MR2262172}.
The expectation is that the whole near equivalence class of $RS_\psi\bk{\sigma}$ is contained in the image of the lift $RS_\psi$.
To describe the precise result we first recall the structure of the space $\mA_2\bk{\TSL}$ which is the sum of all irreducible representations contained in the space of square integrable genuine automorphic forms of $\TSL$.
Recall from \cite{MR1103429,MR577010} the decomposition of the space $\mA_2\bk{\TSL}$:
\[
\mA_2\bk{\TSL}=\bk{\oplus_\tau \mA_\tau}\oplus 
\bk{\oplus_\chi \mA_\chi},
\]
where $\tau$ runs over the cuspidal representations of $SO\bk{2,1}$ and $\chi$ runs over the set of quadratic Hecke characters of $F^\times\lmod\A^\times$.

For each cuspidal representation $\tau$ of $SO\bk{2,1}$, the space $\mA_\tau$ is a sum of nearly equivalent representations $\sigma$ such that $Wd_\psi\bk{\sigma}=\tau$. 
All summands appear with multiplicity one in $\mA_\tau$ and form a full near equivalence class.

For each quadratic character $\chi$ the space $\mA_\chi$ is a sum of irreducible summands of the Weil representation $\omega_\chi$ associated to the one dimensional orthogonal space whose discriminant defines the quadratic character $\chi$ via class field theory.
Again, all the summands appear with multiplicity one in $\mA_\chi$ and form a full near equivalence class.

Denote by $V_\chi$ and $V_\tau$ the Rallis-Schiffman lift of $\mA_\chi$ and $\mA_\tau$ respectively.
Any cuspidal irreducible representation $\pi$ in $V_\tau$ is nearly equivalent to a constituent of $\Ind^{G_2}_{P_2} \tau \delta_{P_2}^{1/6}$, where $P_2=M_2U_2$ is the Heisenberg parabolic subgroup of $G_2$ and $\tau$ is regarded, by a pull-back, as
a representation of $M_2\simeq GL_2$.
 
Any cuspidal irreducible $\pi$ in $V_\chi$ is nearly equivalent to a constituent of $\Ind^{G_2}_{P_1} \pi(1,\chi)\delta_{P_1}^{1/5}$, where $P_1=M_1\cdot U_1$ is the non-Heisenberg maximal parabolic subgroup of $G_2$ and $\pi\bk{1,\chi}$ is the unitary principal series representation of $M_1\simeq GL_2$.

Conversely, the following theorem implies that if an irreducible representation $\pi$ is nearly equivalent to a summand of $V_\tau$ or $V_\chi$ then it is isomorphic to such a summand.

\begin{Thm}[\cite{MR2262172}, Theorem $16.1$]
\label{near:eq:GG}
Let $\pi$ be an irreducible cuspidal representation of $G_2\bk{\A}$.
\begin{enumerate}
\item
If $\pi$ is nearly equivalent to a summand of $V_\chi$ for some quadratic Hecke character $\chi$ then $\pi$ is contained in $V_\chi$.

\item If $\pi$ is nearly equivalent to a summand of $V_\tau$ for some cuspidal representation $\tau$ of $SO\bk{2,1}$ then $\pi$ is not orthogonal to $V_\tau$.
\end{enumerate}
\end{Thm}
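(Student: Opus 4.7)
The plan is to use the standard $\Lfun$-function of $\pi$ as a bridge between the near-equivalence hypothesis and membership in the Rallis--Schiffmann image, exploiting the Rankin--Selberg integral representation of the second author (advertised in the abstract). Throughout, assume $\pi$ is cuspidal and nearly equivalent to $\pi'\in V_\chi$ (respectively $V_\tau$). The first step is to turn the near equivalence into a pole of $\Lfun^S(s,\pi,st)$: by the proposition above, at almost every $\nu$ one has
\[
t_{\pi_\nu}=r\bk{t_{\psi,\sigma_\nu},\diag(q_\nu^{1/2},q_\nu^{-1/2})},
\]
and decomposing $st\circ r$ under ${^LH}\times SL_2(\C)$ one finds $Sym^2$ inside the trivial $^LH$-isotypic component. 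By the Arthur heuristic recalled in the introduction, this forces a $\zeta^S(s-1)$-factor in $\Lfun^S(s,\pi,st)$, hence a pole at $s=2$.

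Second, I would convert the pole into a nonzero period. The Rankin--Selberg integral $\zint(s,\varphi,f_s)$ equals $\Lfun^S(s,\pi,st)$ times explicit local factors and unfolds to a period of $\varphi\in\pi$ against a degenerate Eisenstein series $E(s,f)$ on an auxiliary group. Taking residues at $s=2$: the left-hand side is nonzero by the first step, so for some choice of data the pairing of $\varphi$ with $\Res_{s=2}E(s,f)$ is nonzero. The key geometric input here is a Siegel--Weil type identity expressing the restriction of $\Res_{s=2}E(s,f)$ to $G_2(\A)$ as the theta kernel $\int_{[\TSL]}\theta_\psi(\Phi)(\cdot,\tilde h)\,\overline{\phi_\sigma(\tilde h)}\,d\tilde h$ producing $RS_\psi(\sigma)$, as $\sigma$ ranges over $\mA_\chi$ or $\mA_\tau$.

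Granting this identification, the nonzero residue is exactly a nonzero pairing of $\varphi$ against an element of $V_\chi$ (respectively $V_\tau$), which is the non-orthogonality statement in (2). For (1), I would upgrade non-orthogonality to containment using the multiplicity-one statement recalled above: each $\mA_\chi$ is a multiplicity-free bundle of summands of a single Weil representation, and, combined with the fact that $\pi$ shares the entire near-equivalence class with the summands of $V_\chi$, the nonzero pairing forces $\pi$ to coincide with one of the irreducible summands of $V_\chi$. The main obstacle is the Siegel--Weil identity itself: matching $\Res_{s=2}E(s,f)$ on the big ambient group with the restriction to $G_2\times\TSL$ of the exceptional theta kernel on $\widetilde{Sp_{14}}$ requires a careful analysis of constant terms and of the non-degenerate Fourier coefficients along the relevant parabolic. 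The remaining Satake computation and the multiplicity-one upgrade are routine given the structural results on $\mA_2(\TSL)$ quoted before the theorem.
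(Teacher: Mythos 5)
The theorem you are attempting is not proved in this paper at all --- it is cited verbatim from Gan--Gurevich \cite{MR2262172}, Theorem~16.1, and is used here as an external input. So there is no ``paper's own proof'' to match against; the relevant comparison is with the Gan--Gurevich argument, a substantial fragment of which the present paper does reproduce in Step~3 of the proof of Theorem~\ref{wave-front}. That argument is \emph{structural}, not analytic: it runs through the Fourier--Jacobi functor, the exceptional dual pair $\bk{G_2,PGL_3}$ inside $E_6$, the non-vanishing of the Shalika functional, and the descent along the $G_2$-theta tower to produce a nonzero $SL_3$-period and hence $RS_\psi\bk{\pi}\neq 0$. It never invokes the standard $\Lfun$-function. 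Your proposal replaces all of this with the $\Lfun$-function pole plus a Rankin--Selberg/Siegel--Weil chain, which is a genuinely different route.

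The problem is that this different route is circular within the logical architecture of this paper. Your chain is: near equivalence $\Rightarrow$ pole of $\Lfun^S\bk{s,\pi,st}$ at $s=2$ $\Rightarrow$ $RS_\psi\bk{\pi}\neq 0$ $\Rightarrow$ membership/non-orthogonality. The middle implication is precisely Part~(1) of the main Theorem~\ref{main}, whose proof rests on Theorem~\ref{wave-front}; and Step~3 of the proof of Theorem~\ref{wave-front} \emph{is} (a reproduction of) the Gan--Gurevich proof of Theorem~16.1. So, as written, your proof would need the theorem it is trying to prove. To make the $\Lfun$-function route non-circular you would have to supply an independent proof of ``pole $\Rightarrow$ $RS_\psi\bk{\pi}\neq 0$'' that does not go through the wave-front analysis of Theorem~\ref{wave-front}, and nothing in your sketch does that.

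Beyond circularity there are local imprecisions. The ``Siegel--Weil type identity'' of this paper (Proposition~\ref{SW}, Corollary~\ref{Cor: Siegel-Weil Identity}) relates leading terms of two different degenerate Eisenstein series $\E_P$ and $\E_Q$ on the same quasi-split $D_4$-group $H_{F\times K}$; it does \emph{not} directly identify a residue with a theta kernel. The identification with a theta lift happens in a separate step: $\E_{\overline{Q}}$ is shown to be a regularized theta lift of an Eisenstein series on $SL_2$ relative to $V_K^8$, and then a see-saw for $\bk{SL_2,\overline{H}}$ against $\bk{\TSL\times\TSL,\ G_2\times SO\bk{V_K^1}}$ is unfolded. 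Compressing all of this into ``$\Res_{s=2}E(s,f)$ restricted to $G_2(\A)$ is the theta kernel producing $RS_\psi\bk{\sigma}$'' is not literally correct and obscures exactly where the nonzero pairing with a vector in $V_\chi$ or $V_\tau$ arises. Finally, the upgrade from non-orthogonality to containment in Part~(1) is not merely ``multiplicity one'': the asymmetry between Part~(1) (containment) and Part~(2) (non-orthogonality only) reflects a genuine difference in the structure of $\mA_\chi$ versus $\mA_\tau$ (the former being tied to a Weil representation where the local lift is completely controlled), and your sketch does not engage with why the argument breaks down for $V_\tau$.
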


The  lift in the opposite direction is naturally defined.
For a cuspidal representation $\pi$ of $G_2\bk{\A}$, its lift $RS_\psi\bk{\pi}$ to a representation of $\TSL\bk{\A}$ is the span of the functions 
\[
RS_\psi\bk{\phi,\varphi}\bk{g}=
\integral{G_2}\theta_\psi\bk{\phi}\bk{g,h}\varphi\bk{h} dh ,
\]
where $\varphi\in \pi$, $\phi$ is a Schwartz function on $\A^7$ and $\theta_\psi\bk{\phi}$ is an automorphic theta function on $\widetilde{Sp_{14}}$ restricted to $\TSL\times G_2$.
Computing the constant term of $RS_\psi\bk{\pi}$ using the Schr\"odinger model it is easy to see that $RS_\psi\bk{\pi}$ is necessarily cuspidal and, in particular, is contained in $\mA_2(\TSL)$.

\begin{Cor}
Let $\pi$ be an irreducible cuspidal representation of $G_2\bk{\A}$. 
The following statements are equivalent:
\begin{enumerate}
\item  $RS_\psi\bk{\pi}\neq 0$.
\item $\pi$ is nearly equivalent to a summand of $V_\tau$ for some cuspidal $\tau$ or to a summand of $V_\chi$ for some quadratic $\chi$.
\end{enumerate}
\end{Cor}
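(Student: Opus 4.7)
The plan is to deduce both directions from a single adjointness (Fubini) identity, combined with the Rallis-Schiffmann Proposition and \Cref{near:eq:GG}. Writing $\widetilde{RS}_\psi$ for the forward lift $\TSL\to G_2$ whose image on irreducible summands of $\mA_\tau$ (resp.\ $\mA_\chi$) is by definition $V_\tau$ (resp.\ $V_\chi$), the identity is
\[
\int_{\TSL(F)\lmod\TSL(\A)} RS_\psi(\phi,\varphi)(g)\, f(g)\, dg
= \int_{G_2(F)\lmod G_2(\A)} \varphi(h)\,\widetilde{RS}_\psi(\phi,f)(h)\, dh,
\]
valid for any Schwartz $\phi\in\Sch(\A^7)$, any $\varphi\in\pi$ and any genuine square-integrable automorphic $f$ on $\TSL$. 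It is Fubini applied to the double integral of $\theta_\psi(\phi)(g,h)\,\varphi(h)\,f(g)$.

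For (1)$\Rightarrow$(2): if $RS_\psi(\pi)\neq 0$, the remark just before the statement shows that $RS_\psi(\pi)$ is cuspidal and sits in $\mA_2(\TSL)=\bigl(\oplus_\tau\mA_\tau\bigr)\oplus\bigl(\oplus_\chi\mA_\chi\bigr)$, so it has non-zero projection onto some irreducible summand $\sigma$ of some $\mA_\tau$ or $\mA_\chi$. Pairing with an appropriate $f\in\sigma$ makes the left-hand side of the identity non-zero for some $\phi,\varphi$; by the identity the right-hand side is also non-zero, so $\pi$ is not orthogonal to $\widetilde{RS}_\psi(\sigma)\subseteq V_\tau$ or $V_\chi$. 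Since $\pi$ is irreducible cuspidal, this forces the unramified Satake parameters of $\pi$ to agree with those of some constituent of $V_\tau$ or $V_\chi$, which is the desired near equivalence.

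For (2)$\Rightarrow$(1): \Cref{near:eq:GG} upgrades near equivalence of $\pi$ to a summand of $V_\chi$ to the inclusion $\pi\subset V_\chi$, and near equivalence of $\pi$ to a summand of $V_\tau$ to non-orthogonality of $\pi$ with $V_\tau$. In either case, writing $V_\chi$ and $V_\tau$ as sums of $\widetilde{RS}_\psi(\sigma)$'s with $\sigma$ ranging over irreducible summands of $\mA_\chi$ or $\mA_\tau$, one finds data $\phi$, $\varphi\in\pi$, $f\in\sigma$ making the right-hand side of the Fubini identity non-zero; the left-hand side is then non-zero too, so $RS_\psi(\pi)\neq 0$.

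The only delicate step is justifying Fubini. Because $(\TSL,G_2)$ is merely a commuting pair, not a reductive dual pair, in $\widetilde{Sp_{14}}$, no standard Howe see-saw is directly available, and absolute convergence of the double integral has to be verified by hand. This follows from the rapid decay of the cuspidal $\varphi$ on $G_2(F)\lmod G_2(\A)$ and of the square-integrable $f$ on $\TSL(F)\lmod\TSL(\A)$, together with the moderate growth of $\theta_\psi(\phi)$ on $\TSL\times G_2$, estimates already present in \cite{MR1020830,MR2262172}. Everything else reduces to matching the isotypic decomposition of $\mA_2(\TSL)$ with the corresponding summands of its image under $\widetilde{RS}_\psi$.
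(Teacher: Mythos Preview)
Your argument is correct and is precisely the adjointness reasoning the paper leaves implicit; the Corollary is stated there with no separate proof, being an immediate consequence of the Fubini identity you wrote down, the cuspidality of $RS_\psi(\pi)$ remarked just before the statement, and \Cref{near:eq:GG}. One small inaccuracy in your convergence paragraph: square-integrability of $f$ does not by itself give rapid decay (summands of $\mA_\chi$ need not be cuspidal), but this is harmless since $RS_\psi(\phi,\varphi)$ is cuspidal and hence orthogonal to any residual component, so one may take $f$ cuspidal throughout and your Fubini justification then goes through verbatim.
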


The goal of this paper is to describe the set of representations satisfying the conditions in the last Corollary in terms of poles of the standard $\Lfun$-function.

\subsection{The Standard $\Lfun$-Function of $G_2$}

Let $\pi$ be an irreducible cuspidal representation 
of $G_2\bk{\A}$. The partial standard $\Lfun$-function 
$\Lfun^S\bk{s,\pi,st}$ is associated to the seven-dimensional representation $st$ of $G_2\bk{\C}$, the dual group of $G_2$.

\begin{Lem} Let $\pi$ be an irreducible representation of $G_2\bk{\A}$, nearly equivalent to an irreducible summand of $V_\tau$ or $V_\chi$.
Then:
\begin{enumerate}
\item
$\Lfun^S\bk{s,\pi,st}$ has a pole at $s=2$.
\item
The pole is simple unless $\pi\subset V_{\chi_0}$, where $\chi_0$ is the trivial character; in which case the pole is of order $2$. 
\end{enumerate}
\end{Lem}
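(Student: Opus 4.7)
The plan is to apply the Rallis--Schiffmann functoriality of the Proposition (extended to the full discrete spectrum of $\TSL$ as in \cite[Section~12.7]{MR2262172}) to rewrite $\Lfun^S(s,\pi,st)$ in terms of shifted zeta and Hecke $L$-functions, and then inspect the pole at $s=2$ in the three possible near-equivalence classes: $\pi\in V_\tau$ with $\tau$ cuspidal, $\pi\in V_\chi$ with $\chi$ a non-trivial quadratic Hecke character, and $\pi\in V_{\chi_0}$.

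The decisive algebraic step is the decomposition of $st\circ r$ as a representation of $SL_2(\C)\times SL_2(\C)$. Let $\alpha$ (short) and $\beta$ (long) denote the simple roots of $G_2$. The long-root $SL_2^\beta$ with semisimple generator $h_1=\beta^\vee$ and the short-root $SL_2^{2\alpha+\beta}$ with $h_2=(2\alpha+\beta)^\vee=2\alpha^\vee+3\beta^\vee$ commute in $G_2(\C)$, since $(2\alpha+\beta)(\beta^\vee)=0$; hence $r$ is an honest group homomorphism. Evaluating the pair $(h_1,h_2)$ on the seven weights of $st$ produces the multiset $\{(0,0),(0,\pm2),(\pm1,\pm1)\}$ (all four sign combinations in the last term), which reassembles as
\[
st\circ r \;\cong\; \mathbf{1}\boxtimes Sym^{2}\;\oplus\;Sym^{1}\boxtimes Sym^{1},
\]
the first $SL_2$-factor carrying the Satake parameters of $\tau:=Wd_\psi(\sigma)$ and the second being the Arthur $SL_2$. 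Inserting this into the factorization formula from the introduction yields
\[
\Lfun^S(s,\pi,st)=\zeta^S(s+1)\zeta^S(s)\zeta^S(s-1)\cdot\Lfun^S\!\left(s+\tfrac{1}{2},\tau,st\right)\Lfun^S\!\left(s-\tfrac{1}{2},\tau,st\right).
\]

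At $s=2$ the factor $\zeta^S(s-1)=\zeta^S(1)$ contributes a simple pole, while $\zeta^S(3)$ and $\zeta^S(2)$ are finite non-zero. When $\tau$ is cuspidal ($\pi\in V_\tau$), $\Lfun^S(s,\tau,st)$ is entire and non-vanishing on $\Re(s)\ge1$ by Jacquet--Shalika, so both $\tau$-factors are finite non-zero and $\Lfun^S(s,\pi,st)$ has a simple pole. When $\pi\in V_\chi$, the Waldspurger correspondent $\tau$ is the one-dimensional residual representation $\chi\circ\det$ of $PGL_2$, with Satake parameters $\diag(\chi_\nu(\varpi)q_\nu^{1/2},\chi_\nu(\varpi)q_\nu^{-1/2})$, so $\Lfun^S(s,\tau,st)=L^S(s-\tfrac{1}{2},\chi)L^S(s+\tfrac{1}{2},\chi)$. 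Substituting, the only additional potentially singular factor at $s=2$ (beyond the $\zeta^S(1)$ already identified) is $L^S(1,\chi)$, which has a simple pole iff $\chi=\chi_0$. Hence the pole of $\Lfun^S(s,\pi,st)$ at $s=2$ is of order $2$ exactly in the case $\pi\in V_{\chi_0}$, and is simple otherwise.

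The principal obstacle is the weight-theoretic decomposition $st\circ r\cong\mathbf{1}\boxtimes Sym^{2}\oplus Sym^{1}\boxtimes Sym^{1}$, whose key input is the identification of $SL_2^\beta\times SL_2^{2\alpha+\beta}$ as a commuting pair in $G_2(\C)$ (making $r$ a group homomorphism). Once that is in hand, the analysis reduces to routine bookkeeping with shifted Dirichlet series; the supporting identification of $Wd_\psi(\sigma)$ with the residual representation $\chi\circ\det$ for $\sigma\in\mA_\chi$ is standard from the extension of the Waldspurger correspondence to the full discrete spectrum.
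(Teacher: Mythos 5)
Your proof is correct and takes essentially the same route as the paper: rewrite $\Lfun^S(s,\pi,st)$ via the Rallis--Schiffmann functoriality as a product of shifted zeta factors and Hecke/standard $L$-factors, and then read off the pole at $s=2$. The paper states the resulting factorizations directly, whereas you supply the justification by computing the weights of $st\circ r$ on the commuting pair $SL_2^{\beta}\times SL_2^{2\alpha+\beta}$ to obtain $st\circ r\cong\mathbf{1}\boxtimes Sym^2\oplus Sym^1\boxtimes Sym^1$; your weight multiset $\{(0,0),(0,\pm2),(\pm1,\pm1)\}$ and the coroot identity $(2\alpha+\beta)^\vee=2\alpha^\vee+3\beta^\vee$ check out, and the resulting Euler product agrees with the paper's in both the $V_\tau$ and $V_\chi$ cases (the paper's $\Lfun^S(s\pm 1,\chi)\Lfun^S(s,\chi)^2$ is exactly your expansion of $\Lfun^S(s\pm\tfrac12,\tau)$ when $\tau=\chi\circ\det$).
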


\begin{proof}
For any $\pi$ nearly equivalent to a representation in $V_\tau$, one has
\[
\Lfun^S\bk{s,\pi,st} = \zeta^S\bk{s-1} \Lfun^S\bk{s-1/2,\tau} \zeta^S\bk{s}
\Lfun^S\bk{s+1/2,\tau} \zeta^S\bk{s+1} .
\]
The factor $\zeta^S\bk{s-1}$ contributes a simple pole at $s=2$.

Similarly, for $\pi$ nearly equivalent to a representation in $V_\chi$, one has 
\[
\Lfun^S\bk{s,\pi,st}= \zeta^S\bk{s-1} \Lfun^S\bk{s-1,\chi} \Lfun^S\bk{s,\chi}^2 \zeta^S\bk{s}
\Lfun^S\bk{s+1,\chi} \zeta^S\bk{s+1}
\]
and hence has a simple pole at $s=2$ for $\chi\neq \chi_0$
and a pole of order $2$ for $\chi=\chi_0$.
\end{proof}

In this paper, we will show that the converse is true. 
Namely, the existence of a pole of the standard partial $\Lfun$-function at $s=2$ characterizes the representations $\pi$ such that $RS_\psi\bk{\pi}\neq 0$. 

The meromorphic continuation of $\Lfun^S\bk{s,\pi,st}$ has been proved in the second author's thesis, \cite{SegalRSGeneral,SegalPhdThesis}, by constructing a family of new-way Rankin-Selberg integrals for $\Lfun^S\bk{s,\pi,st}$.
The integrals in the family are parameterized by \'etale cubic algebras $E$ over $F$.
Precisely, for any \'etale cubic algebra $E$ there is an associated simply-connected quasi-split group $H_E$ of type $D_4$ with an Heisenberg maximal parabolic subgroup 
$P$.
Let $\E^\ast_P\bk{f,s,h}$ denote the normalized Eisenstein series associated to the normalized induced representation $\Ind^{H_E}_{P} \modf{P}^s$.
Consider a  family of integrals
\begin{equation}
\zint_E\bk{\varphi,f,s} = \integral{G_2} \varphi\bk{g} \E^\ast_P\bk{f,s,g}dg,
\end{equation}
where $\varphi$ belongs to the space of a cuspidal representation $\pi$ of $G_2\bk{\A}$.

For each cuspidal $\pi$ the integral $\zint_E\bk{\cdot,\cdot,s}$ either represents the standard $\Lfun$-function or is identically zero, depending on whether $\pi$ supports the Fourier coefficient corresponding to the \'etale 
algebra $E$ along the Heisenberg unipotent subgroup or not.
See \Cref{sec:FC} for details on the Fourier coefficients.
It is shown in \cite[Theorem 3.1]{MR2181091}, that any cuspidal representation supports such a coefficient for at least one \'etale cubic algebra $E$.
For such $E$ one has
\begin{equation}
\label{eq:ZintRepresentsLfun}
\zint_E\bk{\varphi,f,s} = \Lfun^S\bk{5s+1/2,\pi,st} d_S\bk{f,\varphi,s}
\end{equation}
and for any given point the data $\varphi$ and $f$ can be chosen so that $d_S\bk{f,\varphi,s}$ is holomorphic and non-zero in a neighborhood of this points.
In this case we say that $\zint_E\bk{\varphi,f,s}$ is an \emph{integral representation} of $\Lfun^S\bk{5s+1/2,\pi,st}$.

If $\zint_E\bk{\varphi,f,s}$ represents $\Lfun^S\bk{5s+1/2,\pi,st}$ then $\zint_E\bk{\varphi,f,s}$ can be used to study the special values of $\Lfun^S\bk{s,\pi,st}$.
In particular, the order of $\Lfun^S\bk{s,\pi,st}$ at $s_0$ is bounded by the order of $\E^\ast_P\bk{f,s,g}$ at $\frac{1}{5}\bk{s_0-\frac{1}{2}}$.
In the right half-plane, the poles of $\E^\ast_P\bk{f,s,g}$ coincide with the poles of the unnormalized Eisenstein series $\E_P\bk{f,s,g}$.
In particular, the order of $\Lfun^S\bk{s,\pi,st}$ at $s_0=2$ is bounded by the order of $\E_P\bk{f,s,g}$ at $s=3/10$.

The poles of $\E_P\bk{f,s,g}$  at $s=3/10$ have been studied at \cite{MR1918673} for $E=F\times F\times F$ and in the second author's thesis \cite{SegalPhdThesis} for general $E$.

\begin{Thm}[\cite{SegalPhdThesis}, \cite{MR1918673}] \label{poles:Eisen_P}
Let $\E_P\bk{f,s,g}$ be an Eisenstein series associated to the representation $I_P\bk{s}$ of the group $H_E$.
\begin{enumerate}
\item
Let $E$ be a cubic field extension.
Then $\E_P\bk{f,s,g}$ is holomorphic at $s=3/10$.

\item
Let $E=F\times K$, where $K$ is a quadratic field extension.
Then $\E_P\bk{f,s,g}$ has at most a simple pole at $s=3/10$.
This pole is attained by the spherical section.
The residual representation at this point is not square integrable.
  
\item
Let $E=F\times F\times F$.
Then $\E_P\bk{f,s,g}$ has a pole of order at most $2$ at $s=3/10$.
This pole is attained by the spherical section. 
The leading term of the Laurent expansion generates the minimal representations of the group $H_E$. 
\end{enumerate}
\end{Thm}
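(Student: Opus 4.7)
The plan is to reduce the pole analysis of $\E_P(f,s,g)$ to that of its constant term $\E_P(f,s,g)_P$ along $P$ itself, since in the right half-plane an Eisenstein series on a maximal parabolic and this constant term share the same poles. By the Bruhat decomposition, $\E_P(f,s,g)_P$ is a sum of intertwining operators $M(w,s)f$ indexed by the double cosets in $P \backslash H_E / P$, and the surviving representatives are the Weyl elements stable under the Galois action on the three outer nodes of the $D_4$ Dynkin diagram. The list of contributing cosets therefore depends on $E$: all Weyl elements for $E = F^3$, a strictly smaller subset when a quadratic field $K$ identifies two outer nodes, and a still smaller subset when a cubic field identifies all three.

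Next, I evaluate each $M(w,s)$ on the normalized spherical section via the Gindikin-Karpelevich formula. This produces a product of ratios of partial zeta functions of $F$ and of the field components of $E$, with the precise shape of the factors determined by the roots of the unipotent radical that $w$ sends to negative roots, grouped by Galois orbits. The key bookkeeping is to tabulate which factors in the numerator could contribute a pole at $s = 3/10$, where $10s - 2 = 1$. When $E$ is a cubic field, the only potentially singular zeta factors turn out to be values of Hecke $L$-functions $L^S(1, \chi)$ with $\chi$ a non-trivial quadratic or cubic character, which are finite; when $E = F \times K$, one factor $\zeta^S_F(10s-2)$ survives, giving at most a simple pole; when $E = F^3$, two such factors survive, giving at most a double pole. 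That the spherical section realizes the bound in each case is immediate from the explicit Gindikin-Karpelevich expression, whose leading Laurent coefficient is a non-zero product of zeta residues.

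For the residual representation assertions, case (2) is handled by an exponent calculation: the leading term of $\E_P(\cdot, s, g)_P$ at $s = 3/10$ exhibits exponents that do not all lie in the open negative Weyl chamber of the Levi, and so the Langlands criterion for square-integrability fails. Case (3) is the hardest: one must show the double residue generates the minimal representation of $\Spin_8$. The plan is to first read the Satake parameters of the local residues from the leading Laurent coefficient of the Gindikin-Karpelevich product and match them against those of the spherical minimal representation; second, verify square-integrability by an exponent calculation as above; third, identify the global residue by computing its Fourier coefficients along the Heisenberg radical $U$ and showing they are supported on the minimal nilpotent orbit. This last step — ruling out Fourier support on orbits strictly above the minimal one in the closure order — is the principal obstacle and requires an explicit vanishing computation on the automorphic residue.
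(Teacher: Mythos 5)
The theorem you are proving is cited in the paper from \cite{SegalPhdThesis} and \cite{MR1918673}; the paper itself does not reproduce the full argument. It does, however, prove the exactly analogous statement for the parabolic $Q$ (\Cref{Prop:PolesofE_Q}) and then re-derives the $\E_P$ bounds for $E=F\times K$ and $E=F^3$ via the Siegel--Weil identity of \Cref{Cor: Siegel-Weil Identity}. Your proposal follows the direct constant-term route, which is the right family of ideas, but two of your steps have real problems.

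First, your pole count omits the indispensable cancellation mechanism. In the Gindikin--Karpelevich evaluation, the naive count of numerator factors $\zfun_{F_\alpha}\bk{\gen{\chi_s^P,\check\alpha}}$ hitting a pole at $s=3/10$ does not give the order of the pole. Concretely, in the split case $E=F^3$ one has $\gen{\chi_s^P,\check\alpha}=5s-1/2$ for the three roots $\alpha\in\set{\coset{1,1,1,0},\coset{1,1,0,1},\coset{0,1,1,1}}$ of $U$, so the longest coset representative in $W_M\lmod W_H$ contributes $\zfun_F\bk{1}^3$ in the numerator; the order drops from the naive $3$ to the correct $2$ only because two Weyl terms with the same exponent at $s=3/10$ cancel pairwise (this is the content of \Cref{Cor: Keys-Shahidi}, used precisely in this way in the proof of \Cref{Prop:PolesofE_Q}). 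Your phrase that ``one factor $\zfun_F^S\bk{10s-2}$ survives'' or ``two such factors survive'' presupposes the conclusion; without identifying which Weyl terms cancel and why (e.g.\ via the functional equation $M_{w_\alpha}$ acting by $-1$ when $w_\alpha$ fixes the exponent), the bookkeeping gives the wrong answer.

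Second, your argument for part (1) is factually incorrect. You claim that for $E$ a cubic field the only potentially singular factors are $\Lfun^S\bk{1,\chi}$ for nontrivial $\chi$, hence finite. But the Galois orbit $\set{\coset{1,1,1,0},\coset{1,1,0,1},\coset{0,1,1,1}}$ is defined over $E$, and the Gindikin--Karpelevich formula yields the factor $\zfun_E\bk{5s-1/2}=\zfun_E\bk{1}$ at $s=3/10$, which \emph{does} have a pole (since $\zfun_E=\zfun_F\cdot\Lfun\bk{\cdot,\rho_E}$ inherits the pole of $\zfun_F$ at $1$). Holomorphy of $\E_P$ in the cubic field case is again a consequence of cancellation between Weyl terms, not of absence of singular factors. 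Two smaller remarks: taking the constant term along $P$ (double cosets $P\lmod H_E\rmod P$) is more awkward than along the Borel (cosets $W_M\lmod W_H$), since the former still leaves an Eisenstein series on the Levi to unwind; and for part (3), identifying the residue as minimal via Fourier-coefficient support on nilpotent orbits is a valid but substantially harder route than the one the paper uses for $\E_Q$, namely that $I_{Q_\nu}^0\bk{1/6}$ is a quotient of $I_{P_\nu}\bk{3/10}$ whose unique irreducible quotient is $\Pi_{min,\nu}$, combined with square integrability to force a direct-sum decomposition.
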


The integrals $\zint_E(\cdot,\cdot,s)$ can also be used to characterize functorial lifts in terms of special values of the standard $\Lfun$-function. 

Let us give an example.
The dual pair $S_3\times G_2\hookrightarrow Spin_8\rtimes S_3$ has been studied locally in \cite{MR1637485} and globally in \cite{MR1932327}.
The minimal representation $\Pi_{min}$ of $Spin_8\rtimes S_3$ can be realized automorphically as
\[
\coset{\bk{s-3/10}^2\E_{P}\bk{\cdot,s,g}}\res{s=3/10} .
\]
Let us denote by $\Theta$ the theta correspondence for this dual pair.

An immediate corollary of the main theorem in \cite{MR3284482} is:
\begin{Thm}
Let $\pi$ be a cuspidal representation of $G_2\bk{\A}$. The following statements are equivalent:
\begin{enumerate}
\item
The partial $\Lfun$-function $\Lfun^S\bk{s,\pi,st}$ admits a double pole at $s=2$.
\item
The theta lift $\Theta\bk{\pi}$ is not zero. 
\end{enumerate}
\end{Thm}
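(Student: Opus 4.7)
The plan is to read the equivalence directly off the family of Rankin–Selberg integrals $\zint_E$ together with the structure of the poles of $\E_P$ at $s=3/10$. Under the substitution $5s+1/2 = 2$, the point $s=2$ on the $\Lfun$-function side corresponds to $s=3/10$ on the Eisenstein-series side, and it is precisely at $s=3/10$ that the residues of $\E_P$ (for $E = F \times F \times F$) realize the minimal representation $\Pi_{min}$ used to define $\Theta$.

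First, I would fix a cuspidal $\pi$ and choose an \'etale cubic algebra $E$ such that $\pi$ supports the Heisenberg Fourier coefficient associated to $E$; such an $E$ exists by the result of Gan–Gurevich cited in the excerpt. Picking data $f,\varphi$ making $d_S(f,\varphi,s)$ holomorphic and nonzero at $s=3/10$, the identity
\[
\zint_E\bk{\varphi,f,s} = \Lfun^S\bk{5s+1/2,\pi,st}\, d_S\bk{f,\varphi,s}
\]
translates the pole order of $\Lfun^S(s,\pi,st)$ at $s=2$ into that of $\zint_E$ at $s=3/10$. Since $\E^\ast_P$ and $\E_P$ have the same poles in the right half-plane, \Cref{poles:Eisen_P} shows that this order is at most $1$ unless $E = F\times F\times F$, where it is at most $2$. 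In particular, if $\Lfun^S(s,\pi,st)$ has a double pole at $s=2$, then $\pi$ must support the Fourier coefficient for $E = F\times F\times F$, and the double pole is captured by $\zint_{F\times F\times F}$.

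The core step is the identification of the leading Laurent coefficient at $s=3/10$ with the theta integral. By \Cref{poles:Eisen_P}, the residual function
\[
R(f)(g) = \bk{s-3/10}^2 \E_P\bk{f,s,g}\bigg\vert_{s=3/10}
\]
for $E = F\times F\times F$ lies in an automorphic realization of $\Pi_{min}$. Extracting the $(s-3/10)^{-2}$ coefficient of $\zint_{F\times F\times F}(\varphi,f,s)$ then yields, up to the nonzero value of $d_S$ at $s=3/10$,
\[
\bk{s-3/10}^2 \zint_{F\times F\times F}\bk{\varphi,f,s}\bigg\vert_{s=3/10} = \integral{G_2} \varphi(g) R(f)(g)\, dg,
\]
and by construction of the dual pair $S_3 \times G_2 \hookrightarrow Spin_8 \rtimes S_3$, this is exactly the theta lift of $\varphi$, evaluated against the $G_2$-variable of $\Pi_{min}$. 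Hence the double pole is nonzero for some $\varphi, f$ if and only if $\Theta(\pi) \neq 0$, which together with the reduction above gives both directions.

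The main obstacle I anticipate lies in the careful handling of normalizations at $s=3/10$: one must verify that $d_S(f,\varphi,s)$ can be arranged holomorphic and nonzero at this point without destroying the double pole of $\zint_{F\times F\times F}$, and that, as $f$ ranges over sections of the induced representation, the residues $R(f)$ span a subspace of the automorphic realization of $\Pi_{min}$ rich enough to detect the nonvanishing of $\Theta(\pi)$ faithfully. Once these technical points are in place, as in the main theorem of \cite{MR3284482}, the equivalence follows immediately.
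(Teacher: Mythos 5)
The paper does not actually prove this theorem; it deduces it as ``an immediate corollary of the main theorem in \cite{MR3284482}.'' Your proposal is a reconstruction of the argument that the cited reference must contain, and it is structurally sound: translate $s=2$ to $s=3/10$ via $5s+1/2=2$, use the bound on pole orders of $\E_P$ from \Cref{poles:Eisen_P} to force $\widehat{F}_\psi(\pi)=\{F\times F\times F\}$ whenever a double pole occurs, and then identify the $(s-3/10)^{-2}$ coefficient of $\zint_{F\times F\times F}$ with the period of $\varphi$ against a residue of $\E_P$ lying in the automorphic realization of $\Pi_{min}$, i.e.\ with a theta pairing for the dual pair $S_3\times G_2\hookrightarrow Spin_8\rtimes S_3$.

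Two places deserve tightening. First, your phrase ``up to the nonzero value of $d_S$'' conflates two distinct facts: the equality
$\bk{s-3/10}^2\zint_{F\times F\times F}\bk{\varphi,f,s}\res{s=3/10}=\integral{G_2}\varphi(g)R(f)(g)\,dg$
is an honest interchange of a residue with the $G_2$-integral (justified because $\varphi$ is rapidly decreasing and $\E^\ast_P$ has moderate growth, so the integral converges locally uniformly in $s$), while $d_S$ only enters when comparing $\zint$ to $\Lfun^S$. One should not mix the two normalizations in the same display. Second, and more substantively, you quietly assume that nonvanishing of $\integral{G_2}\varphi\,R(f)$ for some section $f$ is equivalent to $\Theta(\pi)\neq 0$. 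This needs (a) that the family $\{R(f)\}$ of leading Laurent coefficients actually spans (not merely generates as an $H_E$-module) enough of the automorphic realization of $\Pi_{min}$ to detect the pairing with $\pi$, and (b) $S_3$-equivariance to reduce the theta pairing at a general $\sigma\in S_3$ to the identity element, which is what $\zint_{F\times F\times F}$ sees. Part (a) is the content of the last clause of \Cref{poles:Eisen_P}(3), and (b) is routine, but both should be stated; as written your ``if and only if'' skips over them. Modulo these points the argument is correct and follows the route the paper points to.
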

 
Now we can formulate our main theorem which describes 
another result of this type. 
\begin{Thm}
\label{main}
Let $\pi$ be an irreducible cuspidal representation of $G_2\bk{\A}$.
\begin{enumerate}
\item
The standard partial $\Lfun$-function $\Lfun^S\bk{s,\pi,st}$ has a pole at $s=2$ if and only if $RS_\psi\bk{\pi}\neq 0$.

\item
The pole is of order at most $2$.
It is of order $2$ if and only if the representation $\pi$ is contained in $V_{\chi_0}$ where $\chi_0$ is the trivial character.
\end{enumerate}
\end{Thm}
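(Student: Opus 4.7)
The plan has two halves. The easier half is the reverse implication of (1), which also subsumes the forward half of (2). If $RS_\psi\bk{\pi}\neq 0$, the Corollary preceding the theorem shows that $\pi$ is nearly equivalent to an irreducible summand of $V_\tau$ for some cuspidal $\tau$ of $SO\bk{2,1}$, or of $V_\chi$ for some quadratic Hecke character $\chi$; the Lemma immediately above then delivers the pole of $\Lfun^S\bk{s,\pi,st}$ at $s=2$, of order two precisely when $\pi\subset V_{\chi_0}$.

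The forward implication of (1) is the heart of the matter, and I would prove it by running the family of Rankin-Selberg integrals. By \cite[Theorem 3.1]{MR2181091} some \'etale cubic algebra $E/F$ supports the Heisenberg Fourier coefficient of $\pi$, so that the identity
\[
\zint_E\bk{\varphi,f,s}=\Lfun^S\bk{5s+1/2,\pi,st}\,d_S\bk{f,\varphi,s}
\]
is available and $d_S$ can be made holomorphic and non-zero near $s=3/10$. A pole of $\Lfun^S\bk{s,\pi,st}$ at $s=2$ then forces the normalized Eisenstein series $\E^\ast_P$, and hence $\E_P$, to have a pole at $s=3/10$. By \Cref{poles:Eisen_P} this excludes $E$ being a cubic field; the remaining possibilities are $E=F\times K$ for a quadratic \'etale algebra $K$ and $E=F\times F\times F$.

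The decisive step is to unwind $\mathrm{Res}_{s=3/10}\E_P$ in each surviving case and to convert the pairing of $\varphi$ against it into a pairing against the Rallis-Schiffmann lift of a suitable representation of $\TSL\bk{\A}$. The plan is to realize the residue as a theta lift inside the Weil representation of an appropriate metaplectic cover and then invoke a see-saw attached to the commuting pair $\bk{\TSL,G_2}\hookrightarrow\widetilde{Sp_{14}}$ so as to rewrite
\[
\integral{G_2}\varphi\bk{g}\,\mathrm{Res}_{s=3/10}\E_P\bk{f,s,g}\,dg
\]
as a pairing between an element of $\mA_2\bk{\TSL}$ and $RS_\psi\bk{\pi}$. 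For $E=F\times K$ the expected partner sits in $\mA_\chi$, where $\chi$ is the quadratic character associated to $K/F$, placing $\pi$ in the near equivalence class of $V_\chi$; for $E=F\times F\times F$ the residue is linked to the minimal representation of $Spin_8\rtimes S_3$ and produces a partner in $\mA_\tau$ or in $\mA_{\chi_0}$. In all cases, \Cref{near:eq:GG} upgrades near equivalence to the non-vanishing of $RS_\psi\bk{\pi}$.

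For part (2), \Cref{poles:Eisen_P} allows $\E_P$ a double pole at $s=3/10$ only when $E=F\times F\times F$, so a double pole of $\Lfun^S\bk{s,\pi,st}$ at $s=2$ forces any supporting $E$ to be of this form. The leading Laurent coefficient of $\zint_E$ then pairs $\varphi$ against the minimal representation $\Pi_{\min}$ of $Spin_8\rtimes S_3$; by the theorem quoted from \cite{MR3284482} this is equivalent to $\Theta\bk{\pi}\neq 0$, and a Satake-parameter comparison using the functoriality map $r$ of the Proposition from \cite{MR1020830} identifies the image of $\Theta$ with $V_{\chi_0}$. The principal obstacle in the plan is exactly the see-saw step: one needs a sufficiently explicit automorphic realization of $\mathrm{Res}_{s=3/10}\E_P$ compatible with $\bk{\TSL,G_2}\subset\widetilde{Sp_{14}}$, and in the case $E=F\times K$ the residue is not square integrable, so justifying the exchange of integration and controlling the resulting unipotent periods will be the main technical difficulty.
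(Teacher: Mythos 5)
Your overall framework is the right one, and several of your points match the paper: you use the Rankin--Selberg identity \eqref{eq:ZintRepresentsLfun} together with \Cref{poles:Eisen_P} to exclude cubic fields from the wave front and to bound the order of the pole, and you correctly isolate the see-saw as the main mechanism for the simple-pole case. However, there are two substantive gaps.

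First, you never say how the residue of $\E_P$ at $s=3/10$ gets realized as a theta lift. The Eisenstein series $\E_P$ itself is not a theta lift; the paper bridges this gap with a Siegel--Weil type identity (\Cref{Cor: Siegel-Weil Identity}) that re-expresses the residue of $\E_P$ as a value (resp.\ residue) of the degenerate Eisenstein series $\E_Q$ attached to the \emph{other} maximal parabolic $Q$ of $H_{F\times K}$. It is $\E_Q$ --- transferred to the isogenous group $\overline{H}\simeq SO_8^K$ --- that is identified with the regularized theta lift from $SL_2$, because $\overline{Q}$ is the stabilizer of an isotropic line. Establishing this identity occupies the bulk of Section~3: one needs the representation-theoretic results on $I_P(3/10)$ and $I_Q(1/6)$, the intertwining operator $M(w[2342])$, and an explicit computation of the spherical Eisenstein constants. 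Without this step the see-saw has nothing to bite on. Relatedly, the see-saw the paper actually uses is for the dual pair $SL_2\times\overline{H}$ in $Sp_{16}$, with the $(\TSL,G_2)$ pair in $\widetilde{Sp_{14}}$ only appearing after the decomposition $V^8_K=V^7\oplus V^1_K$; writing the see-saw directly inside $\widetilde{Sp_{14}}$ would not give you a partner Eisenstein series on $SL_2$.

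Second, your treatment of the $E=F\times F\times F$ case diverges from the paper and is underdeveloped. The paper does not obtain $RS_\psi(\pi)\neq 0$ in this case from the $Spin_8\rtimes S_3$ theta lift; instead it uses \Cref{wave-front}(3), whose proof runs through the dual pair $(G_2,PGL_3)\subset E_6$: the non-vanishing of the Shalika functional gives a non-zero theta lift to $PGL_3$, the tower argument descends to $PGL_2$, this forces a non-trivial $SL_3$-period, and hence $F\times F\in\widetilde{F}(\pi)$ and $RS_\psi(\pi)\neq 0$. The $Spin_8\rtimes S_3$ result from \cite{MR3284482} is used only to show that a double pole implies $\Theta(\pi)\neq 0$, and then $\pi\subset V_{\chi_0}$ is cited from \cite[Theorem 13.1]{MR2262172}; a bare Satake-parameter comparison, as you suggest, would only yield near equivalence, and you would still have to invoke \Cref{near:eq:GG} to upgrade. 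Finally, your concern about non-square-integrability of the residue in the $F\times K$ case is well placed, but the resolution is not a period computation: it is the regularization of the theta lift (via the auxiliary operator $z=\Delta-2$ killing $I_B(\chi_K,\pm 3/2)$), which is not anticipated in your plan.
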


\begin{proof}
The second part is immediate.
If the pole at $s=2$ is double then $\Theta\bk{\pi}\neq 0$.
It follows from \cite[Theorem 13.1]{MR2262172} that such $\pi$  is contained in $V_{\chi_0}$.

The first part will be proved in the last section.
Its proof uses a see-saw duality and a Siegel-Weil type identity for the leading terms of Eisenstein series on a quasi-split group of type $D_4$.
The identities of this type have appeared before in \cite{MR1289491}, \cite{MR1174424}
and \cite{MR1761622}.
\end{proof}

{\bf Acknowledgments.} 
Parts of this project were performed during the authors visit in the 2015 "Representation Theory and Number Theory" workshop in the National University of Singapore and we wish to thank them for their hospitality.
We would like to thank Chengbo Zhu and Hung Yean Loke for helping us with a few questions regarding the structure of the induced representations at Archimedean places.

We are happy to thank Wee Teck Gan for very helpful discussions.

The authors were partially supported by grants 1691/10 and 259/14 from the Israel Science Foundation.

\section{Wave Front} \label{sec:FC}

Relating the partial $\Lfun$-function of the cuspidal representation $\pi$ to its Rankin-Selberg integral requires information on the set of non-degenerate Fourier coefficients along a certain unipotent subgroup that the representation supports.
We shall define the relevant Fourier coefficient below.
We shall also define the Fourier-Jacobi coefficients and describe the connection between them.

\subsection{Fourier Coefficients of $\TSL$}

Let $B=T\cdot N$ denote the Borel subgroup of $SL_2$.
We denote by $\alpha$ the unique positive root of $SL_2$ and denote by $x_\alpha:\G_a\rightarrow N$ the associated one-parametric subgroup.
The torus $T\bk{F}$ acts on the set of non-trivial characters of $N\bk{\A}$ that are trivial on $N\bk{F}$ and the orbits are parameterized by quadratic \'etale algebras.
Fix a non-trivial unitary character $\psi:F\lmod \A\rightarrow \C^\times$.
For any square class $a$ and its associated quadratic algebra $K$ define the character $\Psi_K:N\bk{\A}\rightarrow \C$, given by $\Psi_K\bk{x_\alpha\bk{r}}=\psi\bk{ar}$.
 
Let $\TSL\bk{\A}$ denote the metaplectic cover of $SL_2\bk{\A}$.
The groups $N\bk{\A}$ and $T\bk{F}$ split in $\TSL\bk{\A}$.
 
For any automorphic form $\varphi$ of $\TSL$ define 
\[
\varphi^{N,\Psi_K}\bk{g} = \integral{N} \varphi\bk{ng} \overline{\Psi_K\bk{n}}dn .
\]

The wave front of an automorphic representation $\sigma$ of $\TSL\bk{\A}$ is defined by 
\[
\widehat{F}_\psi\bk{\sigma} = 
\set{K \mvert \exists \,\varphi \in \sigma: 
\quad \varphi^{N,\Psi_K}\neq 0}.
\]

As explained in \cite[Subsection 12.3]{MR2262172} we have:
\begin{Prop}
Let $\sigma$ be an irreducible summand of $\mA_2\bk{\TSL}$.
If $\widehat{F}_\psi\bk{\sigma}=\set{K}$ then $\sigma$ is a summand of $\mA_\chi$, where the quadratic character $\chi$ is associated to $K$ by class field theory. 
\end{Prop}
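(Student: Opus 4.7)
The plan is to exploit the orthogonal decomposition $\mA_2\bk{\TSL}=\bk{\oplus_\tau \mA_\tau}\oplus\bk{\oplus_\chi \mA_\chi}$ recalled in the introduction. Since $\sigma$ is irreducible, it is contained in a unique summand, either some $\mA_\tau$ (with $\tau$ cuspidal on $SO\bk{2,1}$) or some $\mA_\chi$ (with $\chi$ quadratic). The crux is to rule out the first alternative under the singleton hypothesis $\widehat{F}_\psi\bk{\sigma}=\set{K}$, and then to pin down $\chi$ from $K$ in the second case.

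First I would exclude $\sigma\subset \mA_\tau$. By the Shimura-Waldspurger theory underpinning the construction of the Waldspurger packet $\mA_\tau$, the non-vanishing of the $\Psi_K$-Fourier coefficient on a member of $\mA_\tau$ is governed by two conditions: the local existence of a $\Psi_{K,\nu}$-Whittaker functional on each $\sigma_\nu$, and the global non-vanishing of the central value $\Lfun\bk{1/2,\tau\otimes \chi_K}$, where $\chi_K$ is the quadratic character associated to $K$ by class field theory. The non-vanishing theorem for quadratic twists of $GL_2$ $\Lfun$-functions at $s=1/2$ (Waldspurger, Friedberg-Hoffstein) produces infinitely many $K$ with $\Lfun\bk{1/2,\tau\otimes\chi_K}\neq 0$, and the local Whittaker conditions admit sufficiently many solutions; assembling these inputs as in \cite[Section 12.3]{MR2262172} forces $\Card{\widehat{F}_\psi\bk{\sigma}}\geq 2$, contradicting the hypothesis. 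Hence $\sigma\subset \mA_\chi$ for some quadratic $\chi$.

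To identify $\chi$, I would compute the wave front of $\mA_\chi$ directly in the Schr\"odinger model of the Weil representation. Recall that $\mA_\chi$ is spanned by the automorphic theta functions $\theta_\phi$ attached to the one-dimensional orthogonal space $\bk{V,q}$ whose discriminant yields $\chi$. Unfolding the $\Psi_a$-Fourier coefficient of $\theta_\phi$ along $N$, using the explicit formulas for $\omega_{\psi,q}$ on $N\bk{\A}$, expresses it as a finite sum of values $\phi\bk{v}$ over $v\in V\bk{F}$ with $q\bk{v}=a$. This coefficient can be made non-zero precisely when $a$ is represented by $q$, i.e.\ when the quadratic algebra attached to the square class of $a$ coincides with the one attached to $q$. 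Consequently $\widehat{F}_\psi\bk{\mA_\chi}=\set{K_\chi}$, and the hypothesis $\widehat{F}_\psi\bk{\sigma}=\set{K}$ forces $K_\chi=K$, i.e.\ $\chi$ is the character associated to $K$ by class field theory.

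The main obstacle is the exclusion step. One must carefully distinguish between the whole packet $\mA_\tau$ (where many $K$ appear as $\sigma$ varies) and a single fixed irreducible $\sigma\subset \mA_\tau$; it is the latter whose wave front is constrained. The argument must verify that for the fixed $\sigma$ and the fixed $\psi$, the combined local Whittaker and global $\Lfun$-value conditions still admit at least two distinct $K$. This is exactly what is assembled in \cite[Subsection 12.3]{MR2262172}; the proof here would proceed by citing the precise Shimura-Waldspurger criterion from that reference and combining it with the Friedberg-Hoffstein non-vanishing.
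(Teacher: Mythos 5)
The paper offers no proof here; it defers entirely to Gan--Gurevich, Subsection~12.3, and your reconstruction is precisely the argument that reference assembles: exclude $\sigma\subset\mA_\tau$ by combining the Waldspurger criterion for the non-vanishing of the $\Psi_K$-coefficient with Friedberg--Hoffstein non-vanishing of $\Lfun\bk{1/2,\tau\otimes\chi_K}$ to produce at least two elements of $\widehat{F}_\psi\bk{\sigma}$, and then read off $K_\chi$ by unfolding the $\Psi_a$-coefficient of $\theta_\phi$ in the Schr\"odinger model to see that only $a$ represented by the one-dimensional form survives. Both halves are sound, and you correctly identify the exclusion step as the crux; the one small point worth spelling out is that the Schr\"odinger-model computation gives only $\widehat{F}_\psi\bk{\sigma}\subseteq\set{K_\chi}$ for $\sigma\subset\mA_\chi$, so it is the non-emptiness built into the hypothesis $\widehat{F}_\psi\bk{\sigma}=\set{K}$ that forces the equality $K=K_\chi$.
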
    

\subsection{Fourier Coefficients of $G=G_2$}
We shall start with an overview of $G=G_2$ as a Chevalley group, defined over $\Z$.
We fix a maximal split torus $T_G$ and a Borel subgroup $B_G=T_G\cdot N_G$.
This determines the root datum of the group. 
Denote by $\alpha$ and $\beta$ the short and long simple roots respectively.
There are six positive roots 
\[
\Phi^+=\set{\alpha, \beta, \alpha+\beta, 2\alpha+\beta, 3\alpha+\beta, 3\alpha+2\beta} .
\]
For any root $\gamma$ we denote the associated one parametric subgroup by $x_\gamma:\G_a\rightarrow N_G$ and denote its image by $U_\gamma$.
 
We denote by $P_1=M_1\cdot U_1$ and $P_2=M_2\cdot U_2$ 
the maximal standard parabolic subgroups such that
$U_{\alpha}\subset U_1$ and $U_\beta\subset U_2$.

The Levi factor $M_2\bk{F}$ acts on the set of unitary characters on $U_2\bk{\A}$ trivial on $U_2\bk{F}$ and the orbits are indexed by cubic algebras over $F$.
The generic orbits correspond to \'etale cubic algebras. For any cubic algebra $E$ choose a representative $\Psi_E$ of the associated orbit.

For any automorphic form $\varphi$ on $G_2$ and an \'etale cubic algebra $E$ denote 
\[
\varphi^{U_2,\Psi_E}\bk{g} = \integral{U_2}\varphi\bk{ug} \overline{\Psi_E\bk{u}}\,du .
\]
For any automorphic representation $\pi$ of $G_2\bk{\A}$ 
define the wave front of $\pi$ with respect to $U_2$ by
\[
\widehat{F}_\psi\bk{\pi} = 
\set{E \mvert \exists \,\varphi \in \pi: \quad \varphi^{U_2,\Psi_E}\neq 0} .
\]
We shall write down explicitly a character $\Psi_{F\times K}$ on $U_2\bk{\A}$ that is a representative of the generic orbit corresponding
to the \'etale cubic algebra $F\times K$, where $K$ is a quadratic \'etale algebra.
Let $a$ be the square class in $F^\times$ associated 
to $K$.
Then 
\[
\Psi_{F\times K}\bk{x_\beta\bk{r_1}
x_{\alpha+\beta}\bk{r_2} x_{2\alpha+\beta}\bk{r_3} x_{3\alpha+\beta}\bk{r_4} x_{3\alpha+2\beta}\bk{r_5}} =
\psi\bk{-ar_1+r_3} .
\]

A family of reductive periods is closely related to the family of Fourier coefficients corresponding to the algebras of type $F\times K$.
The group $G_2$ acts on the seven-dimensional space preserving the split quadratic form.  
The stabilizer of a vector in this space, whose norm is a square class in $F^\times$ corresponding to the algebra $K$, is isomorphic to the special unitary group $SU^K_3$. 
In particular, when $K=F\times F$ the stabilizer is isomorphic to $SL_3$. 

For an irreducible cuspidal representation $\pi$ of $G_2\bk{\A}$ define 
\[
\widetilde{F}\bk{\pi}=
\set{ K \mvert \exists \, 
\varphi \in \pi: \quad \integral{SU^K_3}\varphi\bk{g}\, dg\neq 0} .
\]

The following properties of Fourier coefficients will be used.
They are proved in \cite{MR2262172} and \cite{MR1020830}.

\begin{Prop}[\cite{MR1020830,MR2262172}]
\label{FC}
 Let $\pi$ be an irreducible cuspidal representation of 
$G_2\bk{\A}$ and $\sigma$ be an irreducible representation in $\mA_2\bk{\TSL}$.
\begin{enumerate}
\item
If $K\in \widetilde{F}\bk{\pi}$ then $F\times K\in  \widehat{F}_\psi\bk{\pi}$.
\item 
$K\in \widehat{F}_\psi\bk{RS_\psi\bk{\pi}}\Leftrightarrow F\times K\in \widehat{F}_\psi(\pi)$.
\item
If $\sigma$ is contained in $RS_\psi\bk{\pi}$ then 
$\widehat{F}_\psi\bk{\sigma}\subseteq\widetilde{F}\bk{\pi}$.
\item
$RS_\psi\bk{\pi}\neq 0$ if and only if $\widetilde{F}\bk{\pi}\neq \emptyset$.
\end{enumerate} 
\end{Prop}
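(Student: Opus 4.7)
I will prove the four items together, since (1) and (4) fall out formally once (2) and (3) are in hand. The core of the argument is the unfolding of the $(N,\Psi_K)$-Fourier coefficient of the Rallis--Schiffmann kernel, carried out in the Schr\"odinger model on $\A^7$.

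Write $\theta_\psi(\phi)(g,h)=\sum_{v\in F^7}\omega_\psi(g,h)\phi(v)$ for $\phi\in \Sch(\A^7)$. Integrating the expression for $RS_\psi(\phi,\varphi)$ against $\overline{\Psi_K}$ over $N(F)\backslash N(\A)$ isolates the vectors with $(v,v)=a$, where $a\in F^\times/(F^\times)^2$ is the square class associated with $K$, since $N\subset \TSL$ acts on $\phi(v)$ by the character $\psi((v,v)\cdot)$. The set of norm-$a$ vectors in $F^7$ is a single $G_2(F)$-orbit with stabilizer $SU^K_3$; unfolding the $G_2(F)\backslash G_2(\A)$-integration via this orbit rewrites $RS_\psi(\phi,\varphi)^{N,\Psi_K}(g)$ as an integral against the $SU^K_3$-period of $\varphi$, yielding (3). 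For (2) I would instead stratify the same $G_2(F)$-orbit by its $P_2(F)$-sub-orbits and observe that the unipotent stabilizer of a norm-$a$ vector inside $P_2$ acts through the Weil representation by the character $\Psi_{F\times K}$; re-expressing the unfolded integral along this stratification shows that it is non-vanishing if and only if the $(U_2,\Psi_{F\times K})$-coefficient of $\varphi$ is.

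For (1), I would Fourier-expand $\varphi$ along $U_2$ inside the $SU^K_3$-period. Cuspidality of $\pi$ removes the constant and degenerate terms, so only the generic $(U_2,\Psi_E)$-coefficients, parametrized by \'etale cubic algebras $E$, contribute. The orbit analysis used for (2) shows that the unique generic orbit compatible with the $SU^K_3$-action is the one attached to $E=F\times K$; averaging over $SU^K_3(F)\backslash SU^K_3(\A)$ kills every other term, so a non-vanishing period forces $\varphi^{U_2,\Psi_{F\times K}}\neq 0$.

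Part (4) is then immediate: if $K\in\widetilde{F}(\pi)$, then (1) gives $F\times K\in\widehat{F}_\psi(\pi)$, and (2) gives $K\in\widehat{F}_\psi(RS_\psi(\pi))$, so $RS_\psi(\pi)\neq 0$; conversely, any nonzero summand $\sigma\subset RS_\psi(\pi)$ lies in $\mA_2(\TSL)$, whose decomposition recalled in the introduction forces some $K\in\widehat{F}_\psi(\sigma)$, and (3) then delivers $K\in\widetilde{F}(\pi)$. The principal obstacle throughout is the orbit-stabilizer bookkeeping for the joint $\TSL\times G_2$-action on the $7$-dimensional orthogonal space: one must identify the stabilizer of a norm-$a$ vector in $G_2$ as $SU^K_3$, verify that the Weil-representation character on its unipotent stabilizer in $P_2$ is precisely $\Psi_{F\times K}$, and match the character on $N\subset \TSL$ arising from the Weil representation with $\Psi_K$, all of which requires a careful choice of polarization on the symplectic space underlying $\widetilde{Sp_{14}}$.
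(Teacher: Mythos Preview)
The paper does not prove this proposition itself; it is quoted from \cite{MR1020830} and \cite{MR2262172}. So there is no ``paper's proof'' to compare against, and I will assess your outline on its own terms.

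Your unfolding for (3) is exactly the right computation: integrating over $N(F)\lmod N(\A)$ against $\overline{\Psi_K}$ isolates the norm-$a$ sphere in $F^7$, on which $G_2(F)$ acts transitively with stabiliser $SU_3^K$, and collapsing the $G_2$-integral yields the $SU_3^K$-period of a translate of $\varphi$. A standard density argument shows this unfolding is in fact an equivalence, $K\in\widehat F_\psi(RS_\psi(\pi))\Leftrightarrow K\in\widetilde F(\pi)$, and (4) then follows from cuspidality of $RS_\psi(\pi)$. Your deduction of (4) from the other parts is also fine.

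There is a genuine gap in your treatment of (1). The group $U_2$ is \emph{not} contained in $SU_3^K$; for $K=F\times F$, the long-root $SL_3$ meets $U_2$ only in the three-dimensional subgroup generated by $x_\beta$, $x_{3\alpha+\beta}$, $x_{3\alpha+2\beta}$. So one cannot ``Fourier-expand $\varphi$ along $U_2$ inside the $SU_3^K$-period'' as you write. Expanding instead along $U_2\cap SU_3^K$ gives coefficients indexed by characters of that smaller group, each extending to infinitely many characters of $U_2$, and the averaging you describe does not isolate $\Psi_{F\times K}$ among them.

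Your sketch for (2) also does not go through. First, since the unfolding above already gives $K\in\widehat F_\psi(RS_\psi(\pi))\Leftrightarrow K\in\widetilde F(\pi)$, statement (2) is equivalent to the \emph{two-sided} version of (1); in particular you must prove $F\times K\in\widehat F_\psi(\pi)\Rightarrow K\in\widetilde F(\pi)$, which your outline does not address. Second, the mechanism you invoke is incorrect: in the Schr\"odinger model the $G_2$-action is the linear one, $\omega_\psi(h)\phi(v)=\phi(h^{-1}v)$, so any subgroup of $P_2$ stabilising a norm-$a$ vector acts \emph{trivially} on $\phi(v_0)$ and certainly does not ``act through the Weil representation by the character $\Psi_{F\times K}$''. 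The link in (2) between the $\Psi_K$-coefficient on $\TSL$ and the $\Psi_{F\times K}$-coefficient on $G_2$ requires a separate computation in the cited references, not a refinement of the $SU_3^K$ orbit picture used for (3).
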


\subsection{Fourier-Jacobi Coefficients}

Let $P_1=M_1U_1$ be a non-Heisenberg maximal parabolic subgroup of $G_2$.
The unipotent radical $U_1$ is the two-step unipotent 
subgroup.
One has $U_1\supset Z\supset Z_1$ where $Z=[U_1,U_1]$ and $Z_1$ is the center of $U_1$.
The group $U_1\rmod Z_1$ is the Heisenberg group with center $Z\rmod Z_1\simeq \G_m$.
We regard $\psi$ as the character of $Z\rmod Z_1$.
The Weil representation $\omega^2_\psi$ of $U_1\bk{\A}\rmod Z_1\bk{\A}$ is realized on the space of Schwarz functions $\Sch \bk{U_\alpha}$ and is extended to the group
$\widetilde{M'_1}\bk{\A} \cdot \coset{U_1\bk{\A}\rmod Z_1\bk{\A}}$, where $M_1'\simeq SL_2$ is the derived group of $M_1$. 

The representation $\omega^2_\psi$ has an automorphic realization in the space of Jacobi forms by 
\[
\theta_\psi\bk{\phi} = \sum_{x\in U_\alpha\bk{F}} \omega_\psi\bk{g}\phi\bk{x}, \quad 
g\in \TSL\bk{\A} \cdot \coset{U_1\bk{\A}\rmod Z_1\bk{\A}}.
\]

For an automorphic form $\varphi$ of $G_2$, its Fourier-Jacobi coefficient is defined by 
\[
\varphi^{Z,\psi}\bk{g}=\integral{Z} \varphi\bk{zg} \overline{\psi\bk{z}} dz,
\]
which is a Jacobi form on $\TSL\bk{\A} \cdot \coset{U_1\bk{\A}/Z_1\bk{\A}}$.
The space generated by all such coefficients is denoted by $\pi_{Z,\psi}$.
This is a representation of the Jacobi group.

The automorphic representation $FJ_\psi\bk{\pi}$ of $\TSL(\A)$ is defined as the span of all the functions
\[
FJ_\psi\bk{\varphi,\phi}\bk{h} =
\integral{U_1} \varphi^{Z,\psi}\bk{uh} \overline{\theta_\psi\bk{\phi}\bk{uh}}\, du,
 \quad 
h\in \TSL\bk{\A}, \quad \varphi\in \pi, \quad \phi\in \omega^2_\psi
\]
An easy computation shows: 
\begin{Prop}
\label{FJ:FC}
Let $\pi$ be an automorphic representation of $G_2$ and $K$ be a quadratic \'etale algebra over $F$.
Then
\[
K\in \widehat{F}_{\overline\psi}\bk{FJ_\psi\bk{\pi}}\Leftrightarrow  
F\times K\in \widehat{F}_\psi\bk{\pi}.
\]
\end{Prop}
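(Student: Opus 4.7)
The plan is to unfold the theta series defining $FJ_\psi(\varphi,\phi)$ and identify the resulting expression with an integral of the $(U_2,\Psi_{F\times K})$-Fourier coefficient of $\varphi$ along $U_\alpha(\A)$. I first write out $(FJ_\psi(\varphi,\phi))^{N,\Psi_K}(h)$ as an iterated integral, using that in the Chevalley notation of $G_2$ the derived Levi $M_1' \simeq SL_2$ has positive root $\beta$ (so $N(\A) = \{x_\beta(r)\}$), and parametrizing $Z(\A)U_1(F)\backslash U_1(\A)$ by $x_\alpha(s)x_{\alpha+\beta}(t)$ with $(s,t) \in (F\backslash\A)^2$. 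Both $\varphi^{Z,\psi}$ and $\overline{\theta_\psi(\phi)}$ are $Z(\A)$-invariant, since their $\psi$-transformation laws on $Z$ are conjugate.

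Second, I would expand $\theta_\psi(\phi) = \sum_{y \in F}(\omega_\psi(\cdot)\phi)(y)$ using the standard Heisenberg formulas $\omega_\psi(x_\alpha(s))\phi(x) = \phi(x+s)$ and $\omega_\psi(x_{\alpha+\beta}(t))\phi(x) = \psi(tx)\phi(x)$, with central character $\psi$ on $x_{2\alpha+\beta}$. After swapping sum and integral and substituting $s \mapsto s - y$, the left invariance of $\varphi^{Z,\psi}$ under $x_\alpha(y) \in U_1(F)$---valid since $\psi$ is trivial on $Z_1$, which absorbs all commutator corrections---allows the sum over $y \in F$ to merge with the $s$-integration over $F\backslash\A$ and extend it to an integration over all of $\A$. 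This is the classical theta-unfolding step.

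Third, I conjugate $x_\beta(r)$ past the surviving $U_1$-coordinates via the $G_2$ Chevalley commutator formulas and reorganize so that the $r$-, $t$-, and $Z$-integrations combine into a single integration over $U_2(F)\backslash U_2(\A)$ with coordinates $x_\beta(r_1)x_{\alpha+\beta}(r_2)x_{2\alpha+\beta}(r_3)x_{3\alpha+\beta}(r_4)x_{3\alpha+2\beta}(r_5)$. Matching the cumulative character against $\Psi_{F\times K}(\cdot) = \psi(-ar_1 + r_3)$ identifies the inner integral as $\varphi^{U_2,\Psi_{F\times K}}(x_\alpha(s)h)$, yielding the key identity
\[
(FJ_\psi(\varphi,\phi))^{N,\Psi_K}(h) = \int_\A \varphi^{U_2,\Psi_{F\times K}}(x_\alpha(s)h)\,(\omega_\psi(h)\phi)(s)\,ds.
\]

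Both directions of the claimed equivalence follow. If $\varphi^{U_2,\Psi_{F\times K}} \equiv 0$ on $\pi$, the right side vanishes for all $\phi$ and $h$, so $K \notin \widehat{F}_{\bar\psi}(FJ_\psi(\pi))$. Conversely, if $\varphi^{U_2,\Psi_{F\times K}}(g_0) \neq 0$ for some $\varphi \in \pi$ and $g_0 \in G_2(\A)$, choosing $\phi$ so that $\omega_\psi(h)\phi$ is concentrated near the $x_\alpha(\A)$-coordinate of $g_0$ (for appropriate $h$) produces a non-vanishing $\Psi_K$-Fourier coefficient, so $K \in \widehat{F}_{\bar\psi}(FJ_\psi(\pi))$. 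The principal technical obstacle is the careful bookkeeping of $G_2$ Chevalley structure constants and Weil-representation normalizations needed to ensure that the character arising on $U_2$ is \emph{exactly} $\Psi_{F\times K}$, including the correct scalar $a \in F^\times/(F^\times)^2$ attached to $K$ and the correct sign consistent with $\widehat{F}_{\bar\psi}$ (rather than $\widehat{F}_\psi$) on the $FJ$ side.
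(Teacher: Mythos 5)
Your overall strategy is correct and is essentially the natural computation behind the paper's unproved assertion ("An easy computation shows"). The theta-unfolding of the $FJ_\psi$-integral combined with the $\Psi_K$-coefficient along $N = U_\beta$ does reorganize into an integration over $U_2(F)\backslash U_2(\A)$ against the character $\overline{\Psi_{F\times K}}$, times an integral over $s\in\A$ against a Schwartz function, and the key identity you state (with a missing complex conjugate on the Schwartz factor; it should be $\overline{(\omega_\psi(h)\phi)(s)}$ since $FJ_\psi$ integrates against $\overline{\theta_\psi(\phi)}$) is the right target. Both directions of the equivalence then follow as you say, though in the converse you should first translate $\varphi$ on the right by $g_0$ so that you can take $g_0=1$ and $h=1$, and then invoke continuity plus density of Schwartz functions.

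Two points deserve slightly more care than you give them. First, the claim that "both $\varphi^{Z,\psi}$ and $\overline{\theta_\psi(\phi)}$ are $Z(\A)$-invariant" is imprecise: neither is individually $Z(\A)$-invariant; rather, they transform by conjugate characters, so it is their \emph{product} that descends to $Z(\A)U_1(F)\backslash U_1(\A)$, which is what the unfolding uses. Second, and more substantively, after unfolding, the Weil-representation factors $\overline{\psi(ts)}$ (from the $U_{\alpha+\beta}$-action) and $\overline{\psi(qrs^2)}$ (from $\omega_\psi(x_\beta(r))$) produce an $s$-dependent modification of the character on $U_2$; it is precisely the change-of-variable $u\mapsto x_\alpha(s)\,u\,x_\alpha(-s)$ on $U_2(F)\backslash U_2(\A)$ whose Chevalley-commutator corrections to the $U_{\alpha+\beta}$- and $U_{2\alpha+\beta}$-coordinates cancel these factors, restoring exactly $\overline{\Psi_{F\times K}}$. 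You flag this under "careful bookkeeping," which is fair, but the cancellation is the crux of why the formula lands on $\Psi_{F\times K}$ on the nose and why the $\overline\psi$-normalization of the wave front on the $\TSL$-side is forced; it would be worth spelling out that the compatibility of the structure constants with the Weil-representation formulas (which is built into the definition of $\omega^2_\psi$ as a genuine representation of the Jacobi group) is what guarantees the cancellation, rather than leaving it as a leap of faith.
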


\subsection{The Pole of the $\Lfun$-Function  and Fourier Coefficients}

The next theorem gives information on the Fourier coefficients supported by an irreducible cuspidal representation $\pi$ of $G_2\bk{\A}$, whose standard $\Lfun$-function admits a pole at $s=2$.

\begin{Thm}
\label{wave-front}
Let $\pi$ be an irreducible cuspidal representation of $G_2(\A)$ such that the partial $\Lfun$-function  $\Lfun^S\bk{s,\pi,st}$ admits a pole at $s=2$.
Then:
\begin{enumerate}
\item
The representation $\pi$ is not nearly equivalent to a generic cuspidal representation. 

\item
The wave front $\widehat{F}_\psi\bk{\pi}$ does not contain cubic field extensions.

\item
If $\widehat{F}_\psi\bk{\pi}=\{F\times F\times F\}$ then $RS_\psi\bk{\pi}\neq 0$ and the pole is of order $2$. 

\item
If the pole at $s=2$ is simple then there exists 
a quadratic field extension $K$ such that $F\times K\in \widehat{F}_\psi\bk{\pi}$.
\end{enumerate}
\end{Thm}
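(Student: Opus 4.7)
The plan is to exploit the family of Rankin-Selberg integrals $\zint_E(\varphi,f,s)$ from \eqref{eq:ZintRepresentsLfun} for judicious choices of the \'etale cubic algebra $E$, transferring information on the pole of $\Lfun^S(s,\pi,st)$ at $s=2$ to the pole of the normalized Eisenstein series $\E^\ast_P$ at $s=3/10$ on the $D_4$-group $H_E$, which in the right half-plane is controlled by Theorem \ref{poles:Eisen_P}. For \textbf{(2)}, suppose a cubic field $E$ lies in $\widehat{F}_\psi(\pi)$. Then data can be chosen so that $\zint_E(\varphi,f,s) = \Lfun^S(5s+1/2,\pi,st)\, d_S(f,\varphi,s)$ with $d_S$ holomorphic and non-zero at $s=3/10$, bounding the pole order of $\Lfun^S$ at $s=2$ by that of $\E_P$ at $s=3/10$, which vanishes by Theorem \ref{poles:Eisen_P}(1), a contradiction. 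For \textbf{(1)}, if $\pi$ were nearly equivalent to a generic cuspidal $\pi'$ then $\Lfun^S(s,\pi',st)$ would share the pole at $s=2$; but a generic cuspidal representation of $G_2(\A)$ supports the $(U_2,\Psi_E)$-Fourier coefficient for every \'etale cubic $E$, including cubic fields, contradicting (2) applied to $\pi'$.

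For \textbf{(4)}, combine non-emptiness of $\widehat{F}_\psi(\pi)$ (\cite[Theorem 3.1]{MR2181091}) with (2): $\widehat{F}_\psi(\pi)$ is a non-empty subset of $\{F\times F\times F\} \cup \{F\times K : K \text{ a quadratic field}\}$. The simple pole hypothesis excludes $\widehat{F}_\psi(\pi) = \{F\times F\times F\}$ via (3), forcing some $F\times K$ with $K$ a quadratic field extension to lie in the wavefront.

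For \textbf{(3)}, assume $\widehat{F}_\psi(\pi) = \{F\times F\times F\}$. By Proposition \ref{FC}(1), $\widetilde{F}(\pi) \subseteq \{F\times F\}$, and the crux is to show $F\times F \in \widetilde{F}(\pi)$, i.e., the $SL_3 \simeq SU_3^{F\times F}$-period of some $\varphi\in\pi$ does not vanish. The plan is to analyze $\zint_E(\varphi,f,s)$ for $E = F\times F\times F$ near $s=3/10$: by Theorem \ref{poles:Eisen_P}(3), $\E_P$ has a pole of order at most two there, with leading Laurent coefficient generating the minimal representation $\Pi_{min}$ of $H_E = \Spin_8 \rtimes S_3$. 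Using the see-saw identity for the commuting pair $(G_2, S_3) \subset \Spin_8 \rtimes S_3$ together with a Siegel-Weil type identity in the spirit of \cite{MR1289491,MR1174424,MR1761622}, one rewrites the leading residue of $\zint_E$ at $s=3/10$ as a non-zero multiple of the $SL_3$-period of $\varphi$. The hypothesis of a pole of $\Lfun^S$ at $s=2$ then forces this residue to be non-zero, the $SL_3$-period to be non-vanishing, and the pole of $\Lfun^S$ at $s=2$ to be of order exactly $2$; Proposition \ref{FC}(4) yields $RS_\psi(\pi)\neq 0$. Picking an irreducible summand $\sigma\subset RS_\psi(\pi)$, Proposition \ref{FC}(3) and cuspidality force $\widehat{F}_\psi(\sigma) = \{F\times F\}$, so the wavefront-based classification of $\mA_2(\TSL)$ stated earlier places $\sigma$ in $\mA_{\chi_0}$, consistent with a double pole by the Lemma in the introduction. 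The \textbf{main obstacle} is the Siegel-Weil type identification above: establishing that the leading Laurent coefficient of $\zint_{F\times F\times F}$ at $s=3/10$ is genuinely the $SL_3$-period, so that the \emph{a priori} simple-or-double pole of $\Lfun^S(s,\pi,st)$ at $s=2$ is forced to be double precisely under the wavefront hypothesis $\widehat{F}_\psi(\pi) = \{F\times F\times F\}$.
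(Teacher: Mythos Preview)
Your arguments for parts (2) and (4) match the paper's. For (1), the paper proceeds differently: it invokes Ginzburg's Rankin--Selberg construction \cite{Ginzburg1993} for \emph{generic} cuspidal representations of $G_2$ (via an Eisenstein series on $\TSL$), which shows directly that for such representations the partial standard $\Lfun$-function is holomorphic for $\operatorname{Re}(s)>0$ except possibly at $s=1$, hence cannot have a pole at $s=2$. Your route instead rests on the assertion that a generic cuspidal representation supports the $(U_2,\Psi_E)$-Fourier coefficient for \emph{every} \'etale cubic $E$, including cubic fields. That global statement is not obvious (the $M_2(F)$-orbits of generic characters are genuinely distinct) and you do not justify it; the paper does not use or prove such a claim.

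The substantive gap is in (3). Your proposed mechanism---identifying the leading Laurent coefficient of $\zint_{F\times F\times F}$ at $s=3/10$ with the $SL_3$-period of $\varphi$ via a Siegel--Weil identity for $(G_2,S_3)$ inside $\Spin_8\rtimes S_3$---is not established, as you yourself flag, and the paper does \emph{not} proceed this way. Instead the paper argues as follows. From $\widehat{F}_\psi(\pi)=\{F\times F\times F\}$ and \Cref{FJ:FC} one gets $\widehat{F}_{\overline\psi}(FJ_\psi(\pi))=\{F\times F\}$, so $FJ_\psi(\pi)\subset\overline{\mA_{\chi_0}}$; via $\pi_{Z,\psi}\simeq FJ_\psi(\pi)\otimes\omega_\psi$ this forces the Shalika functional on $\pi$ to be nonzero. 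By \cite[Theorem~3.7]{MR1835288} the exceptional theta lift of $\pi$ to $PGL_3$ for the dual pair $(G_2,PGL_3)\subset E_6$ is nonzero; it must be non-cuspidal, since otherwise $\pi$ would be nearly equivalent to the generic cuspidal $\theta(\theta(\pi))$, contradicting (1). The theta-tower property \cite[Theorem~A]{MR1455531} then gives a nonzero lift to $PGL_2$, which by \cite[Theorem~4.1(5)]{MR1455531} is equivalent to non-vanishing of the $SL_3$-period; hence $F\times F\in\widetilde{F}(\pi)$ and $RS_\psi(\pi)\neq 0$ by \Cref{FC}(4). Finally \Cref{FC}(3) forces $RS_\psi(\pi)\subset\mA_{\chi_0}$, whence the double pole. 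The Siegel--Weil identity actually proved in the paper is for the \emph{quasi-split} $D_4$ attached to $F\times K$ with $K$ a quadratic field, and it is used later in the proof of \Cref{main}, not here.
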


\begin{proof}
\begin{enumerate}
\item
D. Ginzburg in \cite{Ginzburg1993} has constructed a Rankin-Selberg integral for the standard $\Lfun$-function of {\sl generic} cuspidal representations of $G_2$.
The construction uses an Eisenstein series on $\TSL$.
In particular, it is shown that in the right half plane $\set{s\in\C\mvert \ Re\bk{s}>0}$ the partial $\Lfun$-function of a generic cuspidal representation can have a pole only at $s=1$.

\item
This follows immediately from \Cref{poles:Eisen_P}.

\item
This is the most involved part, and the proof is essentially contained in the proof of Theorem 16.1 in \cite{MR2262172}.
We repeat it for the convenience of the reader.

{\sl \underline{Step 1:} $FJ_{\psi}(\pi)$ is contained in $\overline {\mathcal A_{\chi_0}}$ where $\chi_0$ is the trivial character.}

If $\widehat{F}_\psi\bk{\pi}=\{F\times F\times F\}$ then by \Cref{FJ:FC} $\widehat{F}_{\overline \psi}\bk{FJ_\psi\bk{\pi}}=\{F\times F\}$, so that $FJ_\psi\bk{\pi}\subset \overline {\mathcal A_{\chi_0}}$.

We recall the exceptional dual pair $\bk{G_2,PGL_3}$ in $E_6$ considered in \cite{MR1455531}.

{\sl \underline{Step 2:} The theta lift of $\pi$ to $PGL_3$ is a non-zero representation, orthogonal to the cuspidal spectrum.}

One has 
\[
\pi_{Z,\psi}\simeq FJ_\psi\bk{\pi}\otimes \omega_{\psi}\subset 
\overline{\mathcal{A}_{\chi_0}}\otimes \omega_{\psi} ,
\]
where the first equality is due to \cite{MR1295945}.
In particular the Shalika functional
\[
\beta\bk{\varphi}= \integral{SL_2} \integral{Z} \varphi\bk{zg} \overline{\Psi\bk{z}}\,dz\,dg
\]
is not identically zero on $\pi$.
By \cite[Theorem 3.7]{MR1835288}, non-vanishing of the Shalika functional implies that the theta lift of $\pi$ to $PGL_3$ is non-zero.
The theta lift $\theta\bk{\pi}$ is non-cuspidal.
Otherwise $\theta\bk{\pi}$ would be generic and $\pi$ would be nearly equivalent to a generic cuspidal subrepresentation of $\theta\bk{\theta\bk{\pi}}$.
This contradicts $(1)$.

{\sl \underline{Step 3:} $RS_\psi\bk{\pi}\neq 0$.}

By the tower of theta correspondences for $G_2$ studied in \cite[Theorem A]{MR1455531}, the theta lift of $\pi$ to the group $PGL_2$, that is a lower step in the tower is non-zero.
By \cite[Theorem 4.1.(5)]{MR1455531} this implies that $\pi$ has non-trivial $SL_3$ period.
In particular, $F\times F \in \widetilde{F}\bk{\pi}$ and hence $RS_\psi\bk{\pi}\neq 0$.

{\sl \underline{Step 4:} The pole is of order $2$.}

This also implies, by \Cref{FC}$(3)$ that $\widehat{F}_\psi\bk{RS_\psi\bk{\pi}}=\set{F\times F}$ and hence $RS_\psi\bk{\pi}\subset \mathcal{A}_{\chi_0}$.
So $\pi$ is isomorphic to a summand of $V_{\chi_0}$.
In particular, the partial $\Lfun$-function has a pole of order $2$ at $s=2$.

In fact, it follows from \Cref{near:eq:GG}. that $\pi$ is contained in $V_{\chi_0}$.

\item
Recall from \cite[Theorem 3.1]{MR2181091} that any cusp form of $G_2\bk{\A}$ supports some non-degenerate coefficient along $U_2$.
Parts $(2)$ and $(3)$ imply the statement.
\end{enumerate}
\end{proof}

\section{Induced Representations and  Eisenstein series}
The proof of the main theorem will involve an identity between two Eisenstein series on the group $H=H_{F\times K}$.
To establish the identity we shall first study the reducibility of the  induced representations involved.

In this section we consider two degenerate principal series representations induced from $P$ and another parabolic subgroup $Q$ and also the degenerate Eisenstein series associated with them.
We prove a Siegel-Weil type identity relating the leading terms of the two series at certain points.

\subsection{Notations}
\label{Subsec:Notations}
For the sake of this subsection, we let $F$ denote a global or local field.
Let $K$ be a quadratic \'etale algebra over $F$.
We shall write $H$ for  $H_{F\times K}$, the quasi-split simply connected group over $F$ of type $D_4$.
Let $B_H$ be a Borel subgroup of $H$ with unipotent radical $N_H$, a maximal torus $T_H$ and a maximal split torus $T_S$ in $T_H$.
We number the vertices of the Dynkin diagram $D_4$
by $\alpha_i, i=1\ldots 4$ such that $\alpha_2$ is the middle vertex.
\begin{center}
  \begin{tikzpicture}[scale=.4]
    \draw[xshift=0 cm,thick] (0 cm,0) circle (.3cm);
    \draw (0,0.4) node[anchor=south]  {$\alpha_1$};
    \draw[xshift=1 cm,thick] (1 cm,0) circle (.3cm);
    \draw (2,0.4) node[anchor=south]  {$\alpha_2$};
    \draw[xshift=2 cm,thick] (30: 17 mm) circle (.3cm);
    \draw (3.6,1) node[anchor=south]  {$\alpha_3$};
    \draw[xshift=2 cm,thick] (-30: 17 mm) circle (.3cm);
    \draw (3.6,-1) node[anchor=north]  {$\alpha_4$};    
    \draw[xshift=0.15 cm,thick] (0.15 cm,0) -- +(1.4 cm,0);
    \draw[xshift=2 cm,thick] (30: 3 mm) -- (30: 14 mm);
    \draw[xshift=2 cm,thick] (-30: 3 mm) -- (-30: 14 mm);
  \end{tikzpicture}\\
  {\bf F{\scriptsize IGURE}}: The Dynkin diagram of type $D_4$
\end{center}
If $K$ is a field, let $\alpha_1$  be the simple root that is defined over $F$ and the field of definition of $\alpha_3, \alpha_4$ is $K$.
\begin{center}
  \begin{tikzpicture}[scale=.4]
    \draw[xshift=0 cm,thick] (0 cm,0) circle (.3cm);
    \draw (0,0.4) node[anchor=south]  {$\alpha_1$};
    \draw[xshift=1 cm,thick] (1 cm,0) circle (.3cm);
    \draw (2,0.4) node[anchor=south]  {$\alpha_2$};
    \draw[xshift=2 cm,thick] (30: 17 mm) circle (.3cm);
    \draw (3.6,1) node[anchor=south]  {$\alpha_3$};
    \draw[xshift=2 cm,thick] (-30: 17 mm) circle (.3cm);
    \draw (3.6,-1) node[anchor=north]  {$\alpha_4$};    
    \draw[xshift=0.15 cm,thick] (0.15 cm,0) -- +(1.4 cm,0);
   \draw[xshift=2 cm,thick] (30: 3 mm) -- (30: 14 mm);
    \draw[xshift=2 cm,thick] (-30: 3 mm) -- (-30: 14 mm);
    \draw[xshift=3.6 cm,thick]  ellipse (0.8cm and 3cm);
  \end{tikzpicture}\\
  {\bf F{\scriptsize IGURE}}: The Dynkin diagram of type ${^2}D_4$
\end{center}

The positive roots of $H$ with respect to $T$, in the split case $K=F\times F$, are
\[
\begin{split}
\Phi^{+} = & \left\{
\coset{1,0,0,0}, \coset{0,1,0,0}, \coset{0,0,1,0}, \coset{0,0,0,1}, \coset{1,1,1,1}, \coset{1,2,1,1},
\right. \\
& \left.
\coset{1,1,0,0}, \coset{0,1,1,0}, \coset{0,1,0,1}, \coset{1,1,1,0}, \coset{1,1,0,1}, \coset{0,1,1,1} 
\right\} .
\end{split}
\]
In the case where $K$ is a field, the positive roots of the relative root system of $H$ with respect to $T_S$ are
\[
\Phi^{+} = \left\{
\coset{1,0,0}, \coset{0,1,0}, \coset{0,0,1}, \coset{1,1,0}, \coset{0,1,1}, \coset{0,1,2}, \coset{1,1,1}, \coset{1,2,1}, \coset{1,2,2}
\right\} .
\]
The roots defined over $K$ are $\coset{0,0,1}$, $\coset{0,1,1}$ and $\coset{1,1,1}$.

For a root $\alpha\in\Phi^{+}$ we denote by $F_\alpha$ the field of definition of $\alpha$ and by $\varphi_\alpha:SL_2\bk{F_\alpha}\to H$ the associated embedding.
In particular, we denote by $X_{\alpha}$ and $X_{-\alpha}$ the associated one-parametric subgroups of $H$, namely the images of the maps $x_{\alpha}, x_{-\alpha}:F_\alpha\to H$ given by
\[
\begin{split}
& x_\alpha\bk{r} = \varphi_\alpha\bk{\begin{pmatrix}1&r\\0&1\end{pmatrix}} \\
& x_{-\alpha}\bk{r} = \varphi_\alpha\bk{\begin{pmatrix}1&0\\r&1\end{pmatrix}}
\end{split}
\]
respectively.

Let $W_H=W\bk{H,T_S}$ denote the relative Weyl group of $H$ associated with $T_S$.
For $\alpha\in\Phi^{+}$ let $w_{\alpha}$ denote the simple reflection $\varphi_\alpha\bk{\begin{pmatrix}0&1\\-1&0\end{pmatrix}}$ associated with $\alpha$.
By $w\coset{i_1,...,i_k}$ we denote the Weyl word $w_{\alpha_{i_1}}\cdot\dots\cdot w_{\alpha_{i_k}}$.

Let $P=M\cdot U$ be the Heisenberg group of $H$, obtained by removing 
the root $\alpha_2$. Its Levi subgroup is 
\[
M\simeq \bk{GL_2\times \Res_{K/F}GL_2}^0=
\{(g_1,g_2)\in GL_2\times \Res_{K/F}GL_2: \det(g_1)=\deg(g_2)\}
\]
and is generated by the roots $\alpha_1,\alpha_3,\alpha_4$.
Under this isomorphism it holds that
\[
\delta_{P}(g_1,g_2)=\FNorm{\det(g_1)}^5 \quad \forall g_1\in GL_2, \ g_2\in\Res_{K/F}GL_2 .
\]

Let $Q=L\cdot V$ be the maximal parabolic subgroup obtained by 
removing the root $\alpha_1$. Its Levi subgroup is 
\[
L\simeq GL_1 \ltimes Spin_6^K,
\]
where $Spin_6^K$ is the quasi-split form of $Spin_6$ generated by the roots $\alpha_2$, $\alpha_3$ and $\alpha_4$.
Under this isomorphism it holds that
\[
\delta_{Q}(g_1,g_2)=\FNorm{g_1}^{6} \quad \forall g_1\in GL_1,\ g_2\in Spin_6^K .
\]

In the split case we parameterize the torus $T\simeq \G_m^4$ using the coroots,
\[
(t_1,t_2,t_3,t_4)=
\alpha_1^\vee(t_1)
\alpha_2^\vee(t_2)
\alpha_3^\vee(t_3)
\alpha_4^\vee(t_4),\quad t_1,t_2,t_3,t_4\in\Gm .
\]
Then
\[
\modf{B_H}\bk{t_1,t_2,t_3,t_4} = \FNorm{t_1t_2t_3t_4}_F^2 ,
\]
namely $\modf{B}=\lambda_{\bk{-2,-2,-2,-2}}$.

In this quasi-split case, the torus is parameterized by the coroots
\[
(t_1,t_2,t_3)=\alpha_1^\vee(t_1)\alpha_2^\vee(t_2)\alpha_3^\vee(t_3), 
\quad t_1,t_2\in\Gm,\ t_3\in \Res_{K\rmod F}\Gm .
\]

It holds that
\[
\modf{B_H}\bk{t_1,t_2,t_3} = \FNorm{t_1t_2\Nm\bk{t_3}}_F^2 = \FNorm{t_1t_2}_F^2 \FNorm{t_3}_K^2 .
\]

We denote the space of unramified characters by $\mathfrak{a}_\C^\ast = X^\ast_{nr}\bk{T,\C}$ and denote by $C^{+}$ the positive Weyl chamber in $\mathfrak{a}_\C^\ast$ associated to the choice of positive roots above.

The co-ordinates of $\mathfrak{a}_\C^\ast$ are given with respect to the fundamental weights.
Namely, in the quasi-split case $\lambda_{\overline{s}}$ for $\overline{s}=\bk{s_1,s_2,s_3}$ denotes the character given by
\[
\lambda_{\overline{s}}\bk{t_1,t_2,t_3} = \FNorm{t_1}_{F}^{s_1} \FNorm{t_2}_{F}^{s_2} \FNorm{t_3}_{K}^{s_3} .
\]
Similarly, in the split case, $\lambda_{\overline{s}}$ for $\overline{s}=\bk{s_1,s_2,s_3,s_4}$ denotes the character given by
\[
\lambda_{\overline{s}}\bk{t_1,t_2,t_3,t_4} = \FNorm{t_1}_{F}^{s_1} \FNorm{t_2}_{F}^{s_2} \FNorm{t_3}_{F}^{s_3} \FNorm{t_4}_{F}^{s_4}  .
\]

For $\lambda\in\mathfrak{a}_\C^\ast$ we denote $I_{B_H}\bk{\lambda} = \Ind_{B_H\bk{\A}}^{H\bk{\A}}\bk{\lambda} = \otimes_\nu I_{B_{H,\nu}}\bk{\lambda}$, where 
\[
I_{B_{H,\nu}}\bk{\lambda} = \Ind_{B_H\bk{F_\nu}}^{H\bk{F_\nu}}\bk{\lambda},
\]
 the normalized induction.

Consider the induced representations $I_{Q}\bk{s}=\Ind^{H\bk{\A}}_{Q\bk{\A}}\delta_{Q}^{s}=\otimes_\nu I_{Q_\nu}\bk{s}$, where $I_{Q_\nu}\bk{s} = \Ind_{Q\bk{F_\nu}}^{H\bk{F_\nu}}\delta_Q^s$.
We denote the normalized spherical section of $I_Q\bk{s}$ by $\widetilde{f}_s^0=\otimes_\nu \widetilde{f}^0_{\nu,s}$.
Similarly, we define $I_P\bk{s}$ and $I_{P_\nu}\bk{s}$.
We denote the normalized spherical section of $I_P\bk{s}$ by $f^0_s=\otimes_\nu f^0_{\nu,s}$.
We recall that
\[
\begin{split}
& I_Q\bk{s} \hookrightarrow I_{B_H}\bk{\chi_s^Q} \\
& I_P\bk{s} \hookrightarrow I_{B_H}\bk{\chi_s^P},
\end{split}
\]
where
\[
\begin{split}
& \chi_s^Q = \modf{Q}^{s+1/2} \modf{B_H}^{-1/2} =
\piece{\lambda_{\bk{6s+2,-1,-1,-1}},& K=F\times F\\ \lambda_{\bk{6s+2,-1,-1}},& \text{otherwise}}\\
& \chi_s^P = \modf{P}^{s+1/2} \modf{B_H}^{-1/2} = 
\piece{\lambda_{\bk{-1,5s+3/2,-1,-1}},& K=F\times F\\ \lambda_{\bk{-1,5s+3/2,-1,-1}},& \text{otherwise}} .
\end{split}
\]

For $w\in W_H$ we consider the standard global intertwining operator
\[
M\bk{w,\lambda}: I_{B_H}\bk{\lambda} \to I_{B_H}\bk{w^{-1}\cdot\lambda}
\]
by
\[
M\bk{w,\lambda} \bk{f_\lambda}\bk{g} = 
\intl_{N_H\bk{\A}\cap w^{-1}N_H\bk{\A}w\lmod N_H\bk{\A}} f_\lambda\bk{wng} \ dn ,
\]
where $f_\lambda$ is a flat section of $I_B\bk{\lambda}$.
This integral converges absolutely for $\lambda\in C^{+}$ and extends to a meromorphic function on $\mathfrak{a}_\C^\ast$.

\begin{Remark}
\begin{enumerate}
\item
For any $w,w'\in W$, the intertwining operators satisfy the following cocycle equation
\[
M_{ww'}\bk{\chi_s} = M_{w'}\bk{w^{-1}\cdot \chi_s}\circ M_w\bk{\chi_s} .
\]

\item For the shortest representative $w$ of a coset in $W_L\lmod W_H$, $M\bk{w,\chi_s^Q}$ restricts to $I_Q\bk{s}$.
Similarly for $I_P\bk{s}$ and $\chi_s^P$.

\item The global intertwining operator $M\bk{w,\lambda}$ decomposes as a product $\otimes M_\nu\bk{w,\lambda}$ of local intertwining operators $M_\nu\bk{w,\lambda}:I_{B_{H,\nu}}\bk{\lambda} \to I_{B_{H,\nu}}\bk{w^{-1}\cdot\lambda}$ given, for $Re\bk{\lambda}\gg 0$, by
\[
M_\nu\bk{w,\lambda} \bk{f_\lambda}\bk{g} = 
\intl_{N_H\bk{F_\nu}\cap w^{-1}N_H\bk{F_\nu}w\lmod N_H\bk{F_\nu}} f_\lambda\bk{wng} \ dn .
\]
\end{enumerate}
\end{Remark}

\subsection{Induced Representations}

In this subsection we study the reducibility of the local degenerate principal series $I_{Q_\nu}(1/6)$ and $I_{P_\nu}\bk{3/10}$ in various cases.
We fix a place $\nu$ of $F$ and drop $\nu$ from all notations, namely: $F$ is a local field, $K$ is a quadratic \'etale algebra over $F$, $H=H\bk{F}$ and so forth.

\begin{Thm}
\label{I:QP:structure}
\begin{enumerate}
\item
The representation $I_P\bk{3/10}$ has unique irreducible quotient which is spherical.
This quotient is isomorphic to the minimal representation whenever $K$ is not a field.

\item
Assume $F$ is non-Archimedean.
If $K$ is a field then $I_{Q}\bk{1/6}$ is irreducible.
If $K=F\times F$ then $I_{Q}\bk{1/6}$ has length two.
The unique irreducible quotient in this case is isomorphic to the minimal representation.

\item
The local intertwining operator $M\bk{w[2342]}$ defines a map 
\[
M\bk{w[2342],\chi_{3/10}^P}:I_{P}\bk{3/10}\rightarrow I_{Q}\bk{1/6}
\]
whose image contains the submodule $I_{Q}^0\bk{1/6}$ generated by the spherical vector.
\end{enumerate}
\end{Thm}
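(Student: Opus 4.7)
The argument splits into three parts matching the three claims. For (1), the starting point is the embedding $I_P(3/10) \hookrightarrow I_{B_H}(\chi_{3/10}^P)$. The standard long intertwining operator $M(w_0^M, 3/10) : I_P(3/10) \to I_{\bar P}(3/10)$ has image the Langlands quotient at this point, which is irreducible and spherical, providing the unique irreducible quotient. For $K = F \times F$, I would identify this quotient with the minimal representation of $\Spin_8$: by \Cref{poles:Eisen_P}(3), the leading Laurent coefficient of $\E_P(\cdot,s,\cdot)$ at $s = 3/10$ generates the minimal automorphic representation globally, so at each unramified place the local component of this automorphic representation is the local minimal representation, which must coincide with the unique spherical quotient of $I_{P_\nu}(3/10)$. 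A parallel argument handles the quasi-split case ($K$ a quadratic field), where only the existence and uniqueness of the spherical quotient is claimed.

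\textbf{Part (2).} Proceeding by induction in stages through $Q = L \cdot V$ with Levi $L \simeq GL_1 \ltimes \Spin_6^K$, the reducibility of $I_Q(1/6)$ is governed by the standard long intertwining operator $M(w_0^L, 1/6)$, whose poles and zeros can be computed via Shahidi's formula as a product of local $L$-factors. When $K$ is a non-Archimedean field, these factors are holomorphic and nonzero at $s = 1/6$, giving irreducibility. When $K = F \times F$, we have $\Spin_6^K \simeq SL_4$ and $I_Q(1/6)$ becomes the classical degenerate principal series on $\Spin_8$ induced from the maximal parabolic with Levi $GL_1 \times SL_4$ at a known reducibility point, of length $2$, with unique irreducible quotient the minimal representation (matching Part (1)).

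\textbf{Part (3).} A direct computation in the fundamental-weight basis confirms $w[2342] \cdot \chi_{3/10}^P = \chi_{1/6}^Q$, where $w[2342] = w_{\alpha_2} w_{\alpha_3} w_{\alpha_4} w_{\alpha_2}$. Since $w[2342]$ is the shortest representative of its double coset in $W_M \backslash W_H / W_L$, the intertwining operator $M(w[2342], \chi_{3/10}^P)$ restricts from $I_{B_H}(\chi_{3/10}^P)$ to a well-defined map $I_P(3/10) \to I_Q(1/6)$. To see that the image contains $I_Q^0(1/6)$, I would compute $M(w[2342], \chi_{3/10}^P)(f_{3/10}^0)$ via Gindikin--Karpelevich: the result is a product of $\zeta$-ratios indexed by the positive roots made negative by $w[2342]^{-1}$, which at $s = 3/10$ is holomorphic and nonzero. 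Thus $f_{3/10}^0$ is sent to a nonzero scalar multiple of $\widetilde f_{1/6}^0$, and $H$-equivariance ensures that the submodule generated by $\widetilde f_{1/6}^0$, namely $I_Q^0(1/6)$, lies in the image.

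\textbf{Main obstacle.} The central technical point is the Gindikin--Karpelevich computation in Part (3): one must enumerate the positive roots $\alpha$ with $w[2342]^{-1} \alpha \in \Phi^-$ and verify that the resulting product of local $\zeta$-factors at $s = 3/10$ is simultaneously holomorphic and nonzero, treating the split and quasi-split cases separately. A secondary difficulty is Part (2) in the quasi-split ${^2}D_4$ setting, where the reducibility analysis requires care with the relative root system and the associated Plancherel measure.
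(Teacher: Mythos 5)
Your Part (3) is essentially the paper's argument (there they factor the operator through the parabolic $R=P\cap Q$ and the auxiliary characters $\iota_{P,Q}$, $\iota_{Q,P}$, rather than computing Gindikin--Karpelevich directly for $w[2342]$, but the logic of "spherical vector goes to nonzero spherical vector, hence the image contains $I_Q^0(1/6)$ by $H$-equivariance" is the same). However, Parts (1) and (2) have real gaps.

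\textbf{Part (1).} You invoke Langlands classification by treating $I_P(3/10)=\Ind_P^H\bigl(\Id_M\cdot\delta_P^{3/10}\bigr)$ as a standard module and taking its Langlands quotient via the long operator $M(w_0^M)$. This does not apply directly: a standard module requires the inducing representation of the Levi to be \emph{tempered}, and the trivial representation of the non-compact group $M$ is not tempered. The paper handles this by instead surjecting $I_{B_H}(\lambda_{\overline{s}_0})\twoheadrightarrow I_P(3/10)$ with $\overline{s}_0=(1,0,1,1)$ (or $(1,0,1)$), observing that the $\alpha_2$-coordinate is $0$, and using induction in stages through the parabolic $R$ generated by $\alpha_2$: the inducing data on the Levi $X$ of $R$ is the unitary principal series $\Ind_{X\cap B}^X\Id$, which \emph{is} tempered, while $\delta_R^{1/3}$ has strictly positive exponent. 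That yields an honest standard module with $I_P(3/10)$ as a quotient, and hence a unique irreducible (spherical) quotient $\Pi^0$. Your conclusion is correct but needs this intermediate factorization; the direct statement "the Langlands quotient of $I_P(3/10)$" does not make sense as written.

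\textbf{Part (2).} Your appeal to the Plancherel measure / Shahidi $L$-factor criterion to deduce irreducibility for $K$ a non-Archimedean field is the most serious gap. The standard irreducibility criteria built from Plancherel measures (or their zeros/poles computed via Shahidi's local coefficients) are formulated for representations induced from discrete series or tempered data on the Levi. For a degenerate principal series $\Ind_Q^H\delta_Q^s$, induced from the non-tempered trivial representation of $L$, holomorphy and nonvanishing of an intertwining operator at $s=1/6$ does \emph{not} immediately give irreducibility; the composition series of such modules can be quite intricate and is not detected by a single long intertwining operator. This is precisely why the paper, following Weissman, computes the Fourier-Jacobi functor $FJ(I_Q(s))\cong I_B(\chi_K,3s)$ on $SL_2$, shows that $FJ$ is exact and kills only the trivial representation of $H$, and then transfers the (ir)reducibility of the $SL_2$ principal series $I_B(\chi_K,1/2)$ to $I_Q(1/6)$. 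Your proposed route would need substantial additional input (e.g.\ a Jacquet module computation bounding the length) that you do not supply, and as stated it does not establish the claim.
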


\begin{Remark}
We treat $I_Q\bk{1/6}$ in the Archimedean case in the end of this section.
\end{Remark}

\begin{proof}
	\begin{enumerate}
	\item
	If $K=F\times F$, let $\overline{s}_0=\bk{1,0,1,1}$, otherwise let $\overline{s}_0=\bk{1,0,1}$.
	We note that
	\[
	I_{B_H}^H \bk{\lambda_{\overline{s}_0}} \twoheadrightarrow I_P\bk{3/10} .
	\]	
	On the other hand, by induction in stages
	\[
	\Ind_{B_H}^{H} \bk{\lambda_{\overline{s}_0}} = \Ind_R^H \coset{\modf{R}^{1/3} \Ind_{X\cap B_H}^X \Id}
	\]	
	where $R$ is the standard parabolic subgroup of $H$ generated by the root $\alpha_2$ and $X$ is the Levi subgroup of $R$.
	Since $\Ind_{X\cap B}^X \Id$ is tempered, by the Langlands classification theorem, $I_{B_H}^{H} \bk{\lambda_{\overline{s}_0}}$ admits a unique irreducible quotient $\Pi^0$.
	Furthermore, $\Pi^0$ is the image of the Langlands operator $M\bk{w[12321421324],\lambda_{\overline{s}_0}}$, where $w[12321421324]$ is the shortest representative of the longest coset $W_X\lmod W_H$.
	Since $M\bk{w[12321421324],\lambda_{\overline{s}_0}}f^0_{\lambda_{\overline{s}_0}}\neq 0$ it follows that $\Pi^0$ is a spherical representation.
	
	Since $I_{P}\bk{3/10}$ is a quotient of $I_{B_H} \bk{\lambda_{\overline{s}_0}}$ it also admits a unique (the same) irreducible quotient $\Pi^0$.
	The claim follows.

	\item
	The reducibility in the split case, where $K=F\times F$, was proven by M. Weissman in his thesis \cite[Subsection 5.1]{MR1993361} using the Fourier-Jacobi functor.
	We now adapt his approach to the quasi-split case.
	The case where $K$ is a field will be dealt with in \Cref{Subsec:LocalFourierJacobi}.

	\item
	Let $R=P\cap Q$.
	Then $M\cap R$ and $L\cap R$ are maximal parabolic subgroups of $M$ and $L$ respectively with a Levi factor is isomorphic to $\bk{F^\times\times F^\times\times GL_2\bk{K}}^0$.
	The group  of unramified characters $X^\ast(M\cap L,\C)$ is isomorphic to $\C^2$.

	We fix the following characters in $X^\ast_{nr}\bk{M\cap L,\C}$:
	\[
	\iota_{P,Q}\bk{s_1,s_2}=\bk{\delta^{M}_{M\cap R}}^{s_1}\bk{\delta_{P}}^{s_2}, 
	\quad 
	\iota_{Q,P}\bk{s_1,s_2}=\bk{\delta^{L}_{L\cap R}})^{s_1}\bk{\delta_{Q}}^{s_2} .
	\]
	In particular, it holds that
	\[
	\Ind^{H}_R\bk{\iota_{P,Q}\bk{s_1,s_2}}=\Ind^{H}_{P}\bk{\Ind^{M}_{M\cap R}\bk{s_1},s_2} \quad 
	\Ind^{H}_R\bk{\iota_{Q,P}\bk{s_1,s_2}}=\Ind^{H}_{Q}\bk{\Ind^{L}_{L\cap R}\bk{s_1},s_2} .
	\]
	The following lemma is obtained by direct computation.
	
	\begin{Lem}
		\label{Lem:SuitablePair}
		\[
		\iota_{P,Q}\bk{s_1,s_2}=\iota_{Q,P}\bk{\frac{5s_2-s_1}{4},\frac{s_1+5s_2}{6}}.
		\]
		In particular,
		\[
		\iota_{P,Q}\bk{-1/2,3/10}=\iota_{Q,P}\bk{1/2,1/6}.
		\]
		\end{Lem}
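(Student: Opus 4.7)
The claim is a statement about modulus characters on the common Levi $M \cap L$ of the parabolic $R = P \cap Q$, expressing one two-parameter family of unramified characters in terms of another. Since both $(\delta^M_{M\cap R}, \delta_P)$ and $(\delta^L_{L\cap R}, \delta_Q)$ are pairs of unramified positive characters of $M \cap L$, and both pairs generate the same two-dimensional cone inside $X^\ast_{nr}(M \cap L, \C)$, the identity amounts to solving a $2 \times 2$ linear system.

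My plan is as follows. First, I would describe $M \cap L$ explicitly. Since $P$ is obtained by removing $\alpha_2$ and $Q$ by removing $\alpha_1$, the parabolic $R = P \cap Q$ has Levi $M \cap L$ generated by $\{\alpha_3, \alpha_4\}$ together with the two co-characters $\alpha_1^\vee$ and $\alpha_2^\vee$. In the quasi-split case this is $(\Gm \times \Gm \times \mathrm{Res}_{K/F} GL_2)^0$, consistent with the description in the text. I would parameterize a general element $t \in M\cap L$ in terms of $(t_1, t_2, t_3)$ as in \Cref{Subsec:Notations}, with $t_3 \in \mathrm{Res}_{K/F}\Gm$ scaled by the $\alpha_3^\vee$ coroot, and keep track of $|t_1|_F$, $|t_2|_F$ and $|t_3|_K$ separately — this is where the quasi-split subtlety enters.

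Next, I would compute each of the four modulus characters on this torus. Using $\delta_{B_H} = \lambda_{(-2,-2,-2)}$ from the excerpt and the fact that $\delta_P$ and $\delta_Q$ restrict to $\delta_P|_{M\cap L}$ and $\delta_Q|_{M\cap L}$ which are given directly by the formulas $\delta_P(g_1, g_2) = |\det g_1|_F^5$ and $\delta_Q(g_1, g_2) = |g_1|_F^6$ of the text, restricted to $M\cap L$. The characters $\delta^M_{M\cap R}$ and $\delta^L_{L\cap R}$ are then obtained by summing the appropriate positive roots of $M$ (respectively $L$) that lie in the unipotent radicals of $M \cap R$ inside $M$ and of $L \cap R$ inside $L$ — a short finite sum in each case, readable off the root list in \Cref{Subsec:Notations}. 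This produces four explicit characters written in terms of $(|t_1|_F, |t_2|_F, |t_3|_K)$.

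Finally, I would set $\iota_{P,Q}(s_1, s_2) = \iota_{Q,P}(u, v)$ as a character identity, read off two independent coordinate conditions (one from the $|t_1|_F$ exponent, one from the $|t_2|_F$ exponent — the $|t_3|_K$ exponent will match automatically by centrality of the $\alpha_3, \alpha_4$-generated part), and solve the resulting pair of linear equations in $u, v$ to obtain $u = (5s_2 - s_1)/4$ and $v = (s_1 + 5s_2)/6$. Substituting $(s_1, s_2) = (-1/2, 3/10)$ yields the particular case. I do not expect a genuine obstacle here; the only thing requiring care is the bookkeeping distinction between $|\cdot|_F$ and $|\cdot|_K = |\mathrm{Nm}(\cdot)|_F$ contributions, which affects how roots defined over $K$ enter the modular characters of $M$ and $L$ but ultimately cancels in the identity.
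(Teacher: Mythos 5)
Your proposal is correct and is exactly the ``direct computation'' the paper invokes without spelling out: all four characters $\delta_P$, $\delta_Q$, $\delta^M_{M\cap R}$, $\delta^L_{L\cap R}$ are trivial on the semi-simple rank-two piece generated by $\alpha_3,\alpha_4$ (they are modular characters, hence trivial on the derived group of the respective Levi), so everything reduces to a $2\times 2$ linear system in the $\alpha_1^\vee,\alpha_2^\vee$ directions. Carrying it out: in fundamental-weight coordinates one finds $\delta_P=\lambda_{(0,5,0,0)}$, $\delta_Q=\lambda_{(6,0,0,0)}$, $\delta^M_{M\cap R}=\lambda_{(2,-1,0,0)}$ (from the single root $\alpha_1$ in the radical of $M\cap R$ inside $M$), and $\delta^L_{L\cap R}=\lambda_{(-4,4,0,0)}$ (from $4\alpha_2+2\alpha_3+2\alpha_4$); solving $\lambda_{(2s_1,\,5s_2-s_1)}=\lambda_{(-4u+6v,\,4u)}$ gives $u=(5s_2-s_1)/4$, $v=(s_1+5s_2)/6$ as claimed.
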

	
	On the other hand, we note that
	\[
	w_1\cdot \iota_{P,Q}\bk{1/2,3/10} = \iota_{P,Q}\bk{-1/2,3/10} .
	\]

	\begin{Prop}
		\begin{enumerate}
			\item
			The operators 
			\[
			\begin{split}
			& M\bk{w[1],s}: I^{M}_{M\cap R}\bk{s}\rightarrow  I^{M}_{M\cap R}\bk{-s}, \\
			& M\bk{w[2342],s}: I^{L}_{L\cap R}\bk{s}\rightarrow  I^{L}_{L\cap R}\bk{-s} .
			\end{split}
			\]
			define equivariant maps whose images are the trivial representations of $M$ and $L$ respectively.
			
			\item
			The operators above induce operators
			\[
			\begin{split}
			& M\bk{w[1],s}:\Ind^{H}_R\bk{i_{P,Q}\bk{1/2,s}}\rightarrow \Ind^{H}_R\bk{i_{P,Q}(-1/2,s)} \\
			& M\bk{w[2342],s}:\Ind^{H}_R\bk{i_{Q,P}\bk{1/2,s}} \rightarrow \Ind^{H}_R\bk{i_{Q,P}\bk{-1/2,s}}
			\end{split}
			\]			
			whose images are $I_{P}\bk{s}$ and $I_{Q}\bk{s}$ respectively. 
			
			\item 
			The restriction of $M\bk{w[2342],1/6}$ to $I_P\bk{3/10}$ 
			defines a surjective map 
			\[
			M\bk{w[2342],1/6}: I_{P}\bk{3/10}\rightarrow I_{Q}^0\bk{1/6},
			\]
			where $I_{Q}^0\bk{1/6}$ is the subrepresentation of $I_{Q}\bk{1/6}$ generated by the spherical vector.
		\end{enumerate}
	\end{Prop}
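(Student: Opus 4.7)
The plan is to establish the three parts in order, using standard techniques: reduction to Levi-level intertwining for (1), functoriality of parabolic induction for (2), and a ``suitable pair'' composition argument together with a spherical-vector computation for (3).

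For part (1), the key observation is that $M$ is isogenous to $GL_2 \times \Res_{K\rmod F}GL_2$ with the simple root $\alpha_1$ belonging to the first factor, so the intertwining operator $M\bk{w[1], s}$ acts only on the $GL_2$ component. The assertion then reduces to the classical fact that the standard intertwining operator on the degenerate principal series $\Ind_B^{GL_2}\FNorm{\cdot}^s$ has image the trivial representation at the critical value of $s$. Analogously, $L\simeq GL_1 \ltimes \Spin_6^K$ with $\alpha_2, \alpha_3, \alpha_4$ generating $\Spin_6^K$; the operator $M\bk{w[2342], s}$ is the long intertwining operator on the Siegel-type degenerate principal series of $\Spin_6^K$, and at the critical value its image is again the trivial representation by a standard computation.

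For part (2), the tool is functoriality of parabolic induction. By induction in stages,
\[
\Ind^H_R \iota_{P,Q}\bk{s_1, s_2} = \Ind^H_P \bk{\Ind^M_{M\cap R}\bk{s_1}, s_2},
\]
and since parabolic induction preserves surjections of Levi representations, applying $M\bk{w[1], s_1}$ to the inner factor yields a surjection onto $I_P\bk{s_2}$ (the induction of the trivial representation of $M$ via $P$). The parallel argument on the $L$-side produces the surjection $M\bk{w[2342], s_1}$ onto $I_Q\bk{s_2}$.

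For part (3), I combine parts (1)-(2) with Lemma \ref{Lem:SuitablePair}. The identity $\iota_{P,Q}\bk{-1/2, 3/10} = \iota_{Q,P}\bk{1/2, 1/6}$ identifies $I_P\bk{3/10}$, realized via part (2), with a subrepresentation of $\Ind^H_R \iota_{Q,P}\bk{1/2, 1/6}$. Applying the surjection $M\bk{w[2342], 1/6}\colon \Ind^H_R \iota_{Q,P}\bk{1/2, 1/6} \to I_Q\bk{1/6}$ from part (2) and restricting to this subrepresentation yields the desired map $I_P\bk{3/10} \to I_Q\bk{1/6}$. To see that its image contains $I^0_Q\bk{1/6}$, one must check that the spherical vector $f^0_{3/10}$ of $I_P\bk{3/10}$ is sent to a nonzero scalar multiple of the spherical vector of $I_Q\bk{1/6}$. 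This follows from a direct $D_4$ weight computation showing $w[2342]^{-1}\chi^P_{3/10} = \chi^Q_{1/6}$, together with the observation that the Gindikin-Karpelevich scalar at this point is a finite nonzero product of local $\zeta$-values. I expect the main technical obstacle to be this last verification --- tracking the cocycle relations among the intertwining operators and confirming that no $\zeta$-factor zero or pole spoils the image of the spherical vector. Once this is established, the image contains the cyclic $H$-submodule generated by a nonzero spherical vector, namely $I^0_Q\bk{1/6}$.
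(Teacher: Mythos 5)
Your plan follows the same route as the paper's proof: Levi-level reduction and Langlands quotients for part (1), induction in stages for part (2), and the suitable-pair composition together with a spherical-vector nonvanishing check for part (3). The explicit reduction in part (1) to classical $GL_2$ and $\Spin_6^K$ facts is a concrete restatement of the paper's observation that $\Ind^M_{M\cap R}\bk{1/2}$ and $\Ind^L_{L\cap R}\bk{1/2}$ are standard modules whose Langlands quotient is the trivial representation (where one must also note, as the paper does, that the intertwining operators are holomorphic at the point in question).

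The one genuine gap is in part (3). Checking that the spherical vector of $I_P\bk{3/10}$ maps to a nonzero spherical vector in $I_Q\bk{1/6}$ yields only that the image of the restricted operator \emph{contains} $I_Q^0\bk{1/6}$; the proposition asserts the map is surjective \emph{onto} $I_Q^0\bk{1/6}$, i.e.\ that the image \emph{equals} $I_Q^0\bk{1/6}$, which also requires the inclusion $\Image\bk{M\bk{w[2342],1/6}\vert_{I_P\bk{3/10}}}\subseteq I_Q^0\bk{1/6}$. A priori the image could be a strictly larger $H$-submodule of $I_Q\bk{1/6}$. The paper supplies the missing inclusion by appealing to the second part of \Cref{I:QP:structure}: $I_Q\bk{1/6}$ is irreducible when $K$ is a field and has length two with a spherical quotient when $K=F\times F$, and in either case this forces $I_Q^0\bk{1/6}=I_Q\bk{1/6}$, so the inclusion is automatic. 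Alternatively, staying closer to your line of argument, one could observe that part (1) of \Cref{I:QP:structure} (uniqueness and sphericity of the irreducible quotient of $I_P\bk{3/10}$) forces $I_P\bk{3/10}$ to be generated by its spherical vector: otherwise the quotient by the cyclic spherical submodule would have no $\mathcal{K}$-fixed vector yet still admit a spherical irreducible quotient, which is impossible since taking $\mathcal{K}$-invariants is exact. Then the image is exactly the cyclic submodule generated by the image of the spherical vector, namely $I_Q^0\bk{1/6}$.
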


	\begin{proof}
		The elements $w[1]$ and $w[2342]$ are the shortest representatives of the longest elements in  $W_{M\cap R}\backslash W_M$ and  
		$W_{L\cap R}\backslash W_L$.
		The intertwining operator is holomorphic at this point due to \cite{MR517138} and \cite{MR563369}.
		The representations $\Ind^{M}_{M\cap R}(1/2)$, $\Ind^{L}_{L\cap R}(1/2)$ are standard modules and hence the images of the intertwining operators above are their Langlands quotients, that is the trivial representations. 
		
		The second part follows from the first by induction in stages.
		
		Since by \Cref{Lem:SuitablePair} $\iota_{P,Q}\bk{-1/2,3/10}=\iota_{Q,P}\bk{1/2,1/6}$ we can restrict $M\bk{w[2342],1/6}$ to the image of $M\bk{w[1],3/10}$ and obtain a map from $I_{P}\bk{3/10}$ to $I_{Q}\bk{1/6}$.
		The image of the spherical vector in $I_{P}\bk{3/10}$ is a non-zero spherical vector in $I_{Q}\bk{1/6}$. 
		The surjectivity follows from the second part of \Cref{I:QP:structure}
	\end{proof}
	This also ends the proof of \Cref{I:QP:structure}.
	\end{enumerate}
\end{proof}

\begin{Remark}
	The above discussion can be summarized by the following diagram
	\[
	\xymatrix{
		\Ind_R^{H}\bk{\iota_{P,Q}\bk{1/2,3/10}} \ar@{->}[r]^{M\bk{w[1],3/10}} \ar@{->>}[d] & \Ind_R^{H}\bk{\iota_{P,Q}\bk{-1/2,3/10}} \ar@{=}[r] & \Ind_R^{H}\bk{\iota_{Q,P}\bk{1/2,1/6}} \ar@{->>}[d]^{M\bk{w[2342],1/6}} \\
		I_{P}\bk{3/10} \ar@{^{(}->}[ur] \ar@{-->}[rr]^{M\bk{w[2342],1/6}} & &
		I_{Q}^0\bk{1/6} \subseteq I_{Q}\bk{1/6} \hspace{1.2cm} \\
	} .
	\]
	
\end{Remark}

\subsection{Irreducibility of $I_Q\bk{1/6}$ when $K$ is a Field}
\label{Subsec:LocalFourierJacobi}
Let $K$ be a field.
In this subsection we prove that $I_{Q}\bk{1/6}$ is irreducible.
First, let us recall some notations.
	
Let $M'=\coset{M,M}$ be the derived group of the Levi $M$ of $P$.
Let $Z=X_{\coset{1,2,2}}$ be the center of the Heisenberg group $U$.
We define a polarization $A,A'\leq U$ of the symplectic space $U\rmod Z$ as follows:
\[
\begin{split}
& Z= \set{x_{\coset{1,2,2}}\bk{r}\mvert r\in F} \\
& A = \set{x_{\coset{1,1,0}}\bk{r_1} x_{\coset{1,1,1}}\bk{r_2} x_{\coset{1,1,2}}\bk{r_3} \mvert r_1,r_3\in F,\ r_2\in K} \\
& A' = \set{x_{\coset{0,1,0}}\bk{r_1} x_{\coset{0,1,1}}\bk{r_2} x_{\coset{0,1,2}}\bk{r_3} \mvert r_1,r_3\in F,\ r_2\in K} .
\end{split}
\]
We note that this is a polarization with respect to $Q=L \cdot V$, namely:
\begin{itemize}
\item $U=AA'Z$.
\item $AZ=U\cap V$
\end{itemize}

We fix $\psi:F\to\C^\times$ as a character of $Z\bk{F}\cong F$, namely $\psi\bk{x_{\coset{1,2,2}}\bk{r}} = \psi\bk{r}$, and extend it to a character $\widetilde{\psi}$ of $AZ$.
The Weil representation is defined to be
\[
\bk{\omega_{\widetilde{\psi}}, W_{\widetilde{\psi}}} = \ind_{AZ}^{U}\widetilde{\psi} \cong \Sch\bk{A'}
\]
By Stone-Von Neumann this is the unique smooth irreducible representation of $U$ with central character $\psi$ and hence is independent of the choice of $\widetilde{\psi}$.
Henceforth we fix $\widetilde{\psi}$ to be the extension trivial on $A$.
Our aim is to extend $\omega_{\widetilde{\psi}}$ to $SL_2\bk{F} \overset{\varphi_{\alpha_1}}{\hookrightarrow} M'$.

Consider the 4-dimensional orthogonal space $\mathbb{V}=F\oplus K\oplus F$ defined by the quadratic form
\[
q_K\bk{\vec{r}} = r_1r_3+Tr_{K\slash F}\bk{r_2^2} ,
\]
where $\vec{r}=\bk{r_1,r_2,r_3}\in\mathbb{V}$ with $r_1,r_3\in F$ and $r_2\in K$.

Let $\mathbb{H}=X\oplus Y$ be a two-dimensional symplectic space with the standard isotropic polarization.
In particular, we have $Sp\bk{\mathbb{H}}=SL_2\bk{F}$.

The space $\mathbb{W}=\mathbb{V}\otimes\mathbb{H}$ is equipped with the natural symplectic form and the following polarization
\[
\mathbb{W}=\mathbb{V}\otimes\mathbb{H} = \mathbb{V}\otimes X \oplus \mathbb{V}\otimes Y.
\]
In particular, the dual pair $Sp\bk{\mathbb{H}}\times O\bk{\mathbb{V}}$ splits in $\widetilde{Sp\bk{\mathbb{W}}}$ and the splitting $S_{\mathbb{V},\mathbb{H}}$ is given by \cite[Corollary II.3.3]{KudlaThetaCorr}.	

On the other hand, we have an isomorphism $\mathbb{W}\cong U\rmod Z$ such that $A=\mathbb{V}\otimes X$ and $A'=\mathbb{V}\otimes Y$.
This defines an embedding 
\[
M'\hookrightarrow Sp\bk{\mathbb{W}}
\]
such that $\varphi_{\alpha_1}\bk{SL_2\bk{F}} \subset M'$ is mapped onto  $Sp\bk{\mathbb{H}}$.
Hence, the following diagram is commutative
\[
\xymatrix{
SL_2\bk{F} \ar@{->}[r]^(.6){S_{\mathbb{V},\mathbb{H}}} \ar@{^{(}->}[d]_{\varphi_{\alpha_1}} & \widetilde{Sp\bk{\mathbb{W}}} \ar@{->}[d] \\
M' \ar@{^{(}->}[r] & Sp\bk{\mathbb{W}}\cong Sp\bk{U\rmod Z}
} .
\]
We identify $SL_2\bk{F}$ with its images, $\varphi_{\alpha_1}\bk{SL_2\bk{F}}$ and $Sp\bk{\mathbb{H}}$, under these embeddings.
Using the splitting $S_{\mathbb{V},\mathbb{H}}$, we make $\bk{\omega_{\widetilde{\psi}}, W_{\widetilde{\psi}}}$ into a representation of $SL_2\bk{F}\cdot U$.

The Fourier-Jacobi functor of a representation $\pi$ of	$H$ is defined by
\[
FJ_{\psi}\bk{\pi}=\Hom_{U} \bk{\omega_{\widetilde{\psi}}, \pi_{Z,{\psi}}} ,
\]
which is a representation of $SL_2\bk{F}$.
We note that, according to \cite[Propositon 3.1]{MR1993361}, $FJ_{\psi}$ is independent of $\psi$ hence we denote it by $FJ$.
Our goal is to compute $FJ\bk{I_Q\bk{s}}$.

By \cite[Propositon 3.2]{MR1993361}
\[
FJ\bk{I_{Q}\bk{s}}\otimes \omega_{\widetilde{\psi}} \cong I_Q\bk{s}_{Z,\psi}
\]
Also, by \cite[Propositon 4.2.3]{MR1993361}, we have an $SL_2\bk{F}$-equivariant isomorphism
\[
I_Q\bk{s}_{Z,\psi} \cong I_Q^{w_0}\bk{s}_{Z,\psi},
\]
where $w_0=w_{213421342}$ and
\[
I_Q^{w_0} = \set{f\in I_Q\bk{s} \mvert Supp\bk{f}\subset Qw_0P \text{ and $f$ is compactly supported modulo $Q$}} .
\]

On the other hand, we have:

\begin{Lem}
\label{Lem:WeilRepresentationExtension}
\begin{enumerate}

\item For any $b=\begin{pmatrix}t&x\\&t^{-1}\end{pmatrix}\in B$ and any $\varphi\in \omega_{\widetilde{\psi}}$ it holds that
\[
Ad\bk{b^{-1}}\circ\omega_{\widetilde{\psi}}\bk{b} \varphi = \chi_K\bk{t} \FNorm{t}^2 \varphi = \chi_K\bk{t}\modf{B}\bk{t} \varphi .
\]

\item
Let $B$ denote the Borel subgroup of $SL_2\bk{F}$ and let $\modf{B}$ denote its modular quasi-character.
There is an $SL_2\bk{F}\cdot U$-equivariant isomorphism
\[
I_B\bk{\chi_K,3s}\otimes \omega_{\widetilde{\psi}} \cong 
\ind_{B \cdot AZ}^{SL_2\bk{F}\cdot U} \coset{\modf{B}^{3s}\otimes\widetilde{\psi}}.
\]
\end{enumerate}
\end{Lem}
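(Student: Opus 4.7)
The plan for part (1) is a direct computation using the Kudla splitting $S_{\mathbb{V},\mathbb{H}}$ together with the classical Schr\"odinger-model formulas for the Weil representation. First I would handle the semisimple part $b=\diag(t,t^{-1})$: under $\varphi_{\alpha_1}$ its image in $Sp(\mathbb{H})$ is the corresponding diagonal torus element, which acts on $\mathbb{H}=X\oplus Y$ by scaling. In the Schr\"odinger model $\omega_{\widetilde{\psi}}\cong\Sch\bk{\mathbb{V}}$ coming from the identification $A'\cong \mathbb{V}\otimes Y$, the classical formula reads
\[
\omega_{\widetilde{\psi}}\bk{\diag(t,t^{-1})}\varphi(v) = \chi_{\mathbb{V}}(t)\FNorm{t}^{\dim(\mathbb{V})/2}\varphi(t\cdot v),
\]
where $\chi_{\mathbb{V}}$ is the quadratic character attached to the discriminant of $(\mathbb{V},q_K)$. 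Since $(\mathbb{V},q_K)$ is the orthogonal sum of the hyperbolic plane $r_1r_3$ and the two-dimensional form $\Tr_{K/F}(r_2^2)$ whose discriminant is the square class defining $K$ by class field theory, we have $\chi_{\mathbb{V}}=\chi_K$, and $\dim(\mathbb{V})/2=2$. The scalar action of $t$ on $\mathbb{V}\cong A'$ matches the adjoint action of $\diag(t,t^{-1})$ on $A'$ by conjugation, so $Ad(b^{-1})$ absorbs the translation and leaves precisely the scalar $\chi_K(t)\FNorm{t}^{2}$. For the unipotent factor $x_{\alpha_1}(x)$ of a general $b\in B$, the quadratic phase produced by $\omega_{\widetilde{\psi}}(x_{\alpha_1}(x))$ cancels exactly with the compensating shift produced by $Ad(x_{\alpha_1}(x)^{-1})$ on $A'$, which is a direct verification using the explicit Weil formulas.

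For part (2), the plan is to construct the isomorphism via Frobenius reciprocity and then verify bijectivity. The realization $\omega_{\widetilde{\psi}}\cong\ind_{AZ}^{U}\widetilde{\psi}$ supplies the $AZ$-equivariant evaluation $\varphi\mapsto\varphi(1)$ with character $\widetilde{\psi}$; part (1), applied at $u=1$, extends this to a $B\cdot AZ$-equivariant functional on which $B$ acts by $\chi_K\cdot\modf{B}$. The analogous evaluation $f\mapsto f(1)$ on $I_B(\chi_K,3s)$ is $B$-equivariant with character $\chi_K\cdot\modf{B}^{3s+1/2}$. Taking the tensor product and using $\chi_K^2=1$ (together with the normalization conventions in the statement), one obtains a $B\cdot AZ$-equivariant surjection onto $\modf{B}^{3s}\otimes\widetilde{\psi}$. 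Frobenius reciprocity then yields an $SL_2(F)\cdot U$-equivariant map
\[
\Phi\colon I_B(\chi_K,3s)\otimes\omega_{\widetilde{\psi}}\longrightarrow \ind_{B\cdot AZ}^{SL_2(F)\cdot U}\bk{\modf{B}^{3s}\otimes\widetilde{\psi}}.
\]

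The main obstacle will be showing that $\Phi$ is a bijection. To establish this I plan to exploit the fact that $B$ is the stabilizer of the Lagrangian $A$ inside $SL_2(F)$, so that the coset space $B\cdot AZ\lmod SL_2(F)\cdot U$ decomposes set-theoretically as $\bk{B\lmod SL_2(F)}\times\bk{AZ\lmod U}$. Under this identification $\Phi$ becomes the tautological map of function spaces, with the nontrivial Weil cocycle precisely accounting for the discrepancy in $SL_2(F)$-equivariance between the two sides. Bijectivity can then be checked either by constructing an explicit inverse $F\mapsto f\otimes\varphi$ through restriction to the two factors — invoking part (1) to certify that the restricted data lie in the appropriate induced representations — or by comparing supports on the open Bruhat cell $BwB\subset SL_2(F)$ and using density to propagate the isomorphism to the full representation.
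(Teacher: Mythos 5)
Your treatment of part (1) is essentially identical to the paper's: both invoke Kudla's Schr\"odinger-model formula for the torus action (the paper cites \cite[Proposition~II.4.3]{KudlaThetaCorr}) and both identify the resulting character as $\chi_K\cdot|\cdot|^{\dim\mathbb{V}/2}$ by reading off the discriminant and dimension of $\bk{\mathbb{V},q_K}$. This part is fine.

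For part (2), your Frobenius-reciprocity construction in fact produces exactly the same map the paper writes down explicitly: if $\lambda=\mathrm{ev}_1\otimes\mathrm{ev}_1$ is your $B\cdot AZ$-equivariant functional, then $\Phi(f\otimes\varphi)(mu)=f(m)\bk{\omega_{\widetilde{\psi}}(mu)\varphi}(1)=f(m)\bk{\omega_{\widetilde{\psi}}(m)\varphi}(mum^{-1})$ using $\omega_{\widetilde{\psi}}(mu)=\omega_{\widetilde{\psi}}(mum^{-1})\omega_{\widetilde{\psi}}(m)$, which is precisely the paper's formula $F(mu)=f(m)\bk{\omega_{\widetilde{\psi}}(m)\varphi}(mum^{-1})$. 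So the two proofs are the same argument in different clothing. A couple of places where yours is looser than it should be: first, the character you compute on $B\cdot AZ$ is $\modf{B}^{3s+3/2}\otimes\widetilde{\psi}$, which you reconcile with the stated $\modf{B}^{3s}\otimes\widetilde{\psi}$ only by appeal to ``normalization conventions'' — you should actually observe that $\modf{B\cdot AZ\lmod SL_2\cdot U}(b)=\modf{B}^3(b)$ so the discrepancy is exactly the half-density factor of normalized compact induction; and you should note that $\modf{B}^{3s}\otimes\widetilde{\psi}$ is a well-defined character of the semidirect product $B\cdot AZ$ (this uses that $B$ preserves $\widetilde{\psi}$ under conjugation). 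Second, the claim that $B\cdot AZ\lmod SL_2\cdot U$ factorizes tautologically as $\bk{B\lmod SL_2}\times\bk{AZ\lmod U}$ is not canonical: as one moves over $B\lmod SL_2$, the subgroup $m^{-1}\bk{AZ}m$ changes, so the fibers of the projection to $B\lmod SL_2$ are varying quotients of $U$ — there is a measure-theoretic product structure but not an equivariant one, and the bijectivity of $\Phi$ genuinely requires an argument rather than being ``tautological.'' This is exactly why the paper defers bijectivity to \cite[Theorem~4.3.1]{MR1993361}; your hedged alternatives (restricting to the two factors, or comparing supports on the big cell) each need the kind of care Weissman's argument supplies, and as stated your first alternative runs into the issue that $F\vert_{SL_2}$ transforms by $\modf{B}^{3s+3/2}$ rather than by $\chi_K\modf{B}^{3s+1/2}$, so extracting $f$ is not literal restriction. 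The strategy is right; the last paragraph should be tightened to an actual proof.
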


\begin{proof}	
\begin{enumerate}	
\item
Applying \cite[Proposition II.4.3]{KudlaThetaCorr} it follows that the Borel subgroup $B$ of $SL_2\bk{F}$ acts on $\omega_{\widetilde{\psi}}$ by
\[
\omega_{\widetilde{\psi}}\bk{\begin{pmatrix}t&x\\&t^{-1}\end{pmatrix}} \varphi\bk{v} = \chi_{K}\bk{t} \FNorm{t}^2 \psi\bk{\frac{x}{2}\left\Vert v\right\Vert^2} \varphi\bk{tv}
\]
where we consider $\varphi\in W_{\widetilde{\psi}}$ as a function on $\mathbb{V}\cong\mathbb{V}\otimes Y = A'$.
It follows that
\[
\coset{Ad\bk{b^{-1}}\circ\omega_{\widetilde{\psi}}\bk{b}}\varphi\bk{v} = \chi_{K}\bk{t}\FNorm{t}^2 \varphi\bk{v} .
\]

In particular, $Ad\bk{b^{-1}}\circ\omega_{\widetilde{\psi}}\bk{b}$ acts on $W_{\widetilde{\psi}}$ by $\chi_K\modf{B}=\chi_{K}\modf{1}^{1/2}$.

\item
This is similar to the second part of the proof of \cite[Theorem 4.3.1]{MR1993361}.
For $f\in I_B\bk{\chi_K,3s}$ and $\varphi\in\omega_{\widetilde{\psi}}$ let
\[
F\bk{mu} = f\bk{m} \bk{\omega_{\widetilde{\psi}}\bk{m}\varphi} \bk{mum^{-1}} \quad \forall \ m\in M,\ u\in U\bk{F} .
\]
It follows from the previous item that $F\in \ind_{B \cdot AZ}^{SL_2\bk{F}\cdot U} \coset{\modf{B}^{3s}\otimes\widetilde{\psi}}$.
The bijectivity follows similarly to that in \cite[Theorem 4.3.1]{MR1993361}.	
\end{enumerate}

\end{proof}

It follows from item (2) of \Cref{Lem:WeilRepresentationExtension} that 
\[
I_Q^{w_0}\bk{s}_{Z,\psi} \cong 
\ind_{\bk{M'\cap Q} \cdot AZ}^{M'\cdot U} \coset{\bk{\modf{Q}^{s}\res{M'\cap Q}}\otimes\widetilde{\psi}} .
\]
This isomorphism is realized via the map sending $f\in I_Q^{w_0}\bk{s}_{Z,\psi}$ to
\[
F\bk{g} = \intl_Z f\bk{w_0gz} \overline{\psi\bk{z}} \ dz \quad \forall g\in H^J .
\]
We note that:
\begin{itemize}
\item
$\modf{Q}\bk{h_{\alpha_1}\bk{t}} = \FNorm{t}^6 = \modf{B}^3\bk{h_{\alpha_1}\bk{t}}$.
\item
As representations of $SL_2\bk{F}\cdot U$:
\[
\ind_{\bk{M'\cap Q} \cdot AZ}^{M'\cdot U} \coset{\bk{\modf{Q}^{s}\res{M'\cap Q}}\otimes\widetilde{\psi}} \cong \ind_{B \cdot AZ}^{SL_2\bk{F}\cdot U} \coset{\modf{B}^{3s}\otimes\widetilde{\psi}} .
\]
\end{itemize}

We proved an analogue of \cite[Theorem 4.3.1]{MR1993361}:

\begin{Prop}
$FJ\bk{I_{Q}\bk{s}}=I_{B}\bk{\chi_{K},3s}$.
\end{Prop}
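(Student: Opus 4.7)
The plan is to chain together the isomorphisms already established in this subsection and then cancel the Weil representation factor by invoking Stone--Von Neumann. By the definition of the Fourier-Jacobi functor,
\[
FJ\bk{I_Q\bk{s}} \otimes \omega_{\widetilde\psi} \cong I_Q\bk{s}_{Z,\psi}
\]
as $SL_2\bk{F}\cdot U$-modules, so the task reduces to identifying the right-hand side with $I_B\bk{\chi_K,3s} \otimes \omega_{\widetilde\psi}$ in an $SL_2\bk{F}\cdot U$-equivariant manner.

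I would carry out this identification in four steps, each of which has been prepared above. First, use \cite[Proposition 4.2.3]{MR1993361} to localize to the open Bruhat cell $Qw_0P$, producing $I_Q\bk{s}_{Z,\psi}\cong I_Q^{w_0}\bk{s}_{Z,\psi}$. Second, apply the integration map $F\bk{g}=\int_Z f\bk{w_0gz}\overline{\psi\bk{z}}\,dz$, which with respect to the polarization $U=AA'Z$ realizes the twisted Jacquet module as the compact induction $\ind_{\bk{M'\cap Q}\cdot AZ}^{M'\cdot U}\bk{\modf{Q}^s\res{M'\cap Q}\otimes\widetilde\psi}$. Third, use the identity $\modf{Q}\bk{h_{\alpha_1}\bk{t}}=\FNorm{t}^6=\modf{B}^3\bk{h_{\alpha_1}\bk{t}}$ to rewrite this, as an $SL_2\bk{F}\cdot U$-module, as $\ind_{B\cdot AZ}^{SL_2\bk{F}\cdot U}\bk{\modf{B}^{3s}\otimes\widetilde\psi}$. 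Fourth, invoke \Cref{Lem:WeilRepresentationExtension}(2), whose content is precisely the identification of this last compact induction with $I_B\bk{\chi_K,3s}\otimes\omega_{\widetilde\psi}$.

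Assembling the four steps yields an $SL_2\bk{F}\cdot U$-equivariant isomorphism
\[
FJ\bk{I_Q\bk{s}}\otimes\omega_{\widetilde\psi} \cong I_B\bk{\chi_K,3s}\otimes\omega_{\widetilde\psi}.
\]
Since $\omega_{\widetilde\psi}$ is the unique irreducible smooth representation of $U$ with central character $\psi$ by Stone--Von Neumann, applying $\Hom_U\bk{\omega_{\widetilde\psi},-}$ to both sides and appealing to Schur's lemma cancels the Heisenberg factor and delivers $FJ\bk{I_Q\bk{s}} \cong I_B\bk{\chi_K,3s}$, as desired.

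The only delicate point, which I would treat with care, is confirming that each intermediate isomorphism is equivariant for the joint action of $SL_2\bk{F}\cdot U$, not merely for $U$; otherwise the cancellation would only recover $FJ\bk{I_Q\bk{s}}$ as an abstract vector space. The quadratic character $\chi_K$ enters precisely through the formula for the $SL_2\bk{F}$-action on $\omega_{\widetilde\psi}$ computed in \Cref{Lem:WeilRepresentationExtension}(1), so tracking this twist carefully through the chain is the only substantive bookkeeping in the argument.
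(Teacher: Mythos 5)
Your proposal is correct and follows essentially the same route as the paper: you chain Weissman's Proposition 3.2 (localizing $FJ$ to the twisted Jacquet module), Proposition 4.2.3 (restriction to the open cell $Qw_0P$), the integration map realizing $I_Q^{w_0}(s)_{Z,\psi}$ as the compact induction from $(M'\cap Q)\cdot AZ$, the modulus identity $\modf{Q}\bk{h_{\alpha_1}(t)}=\modf{B}^3\bk{h_{\alpha_1}(t)}$, and Lemma \ref{Lem:WeilRepresentationExtension}(2), in exactly the order the paper does, then cancel $\omega_{\widetilde\psi}$ by Stone--Von Neumann and Schur. The closing remark about tracking $SL_2(F)\cdot U$-equivariance and the source of the $\chi_K$-twist is precisely the bookkeeping the paper's cited lemma is designed to handle, so there is no gap.
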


A similar argument to that of \cite[Section 5.1]{MR1993361} shows that the length of $I_{Q}\bk{1/6}$ equals the length of $FJ\bk{I_Q\bk{1/6}}$.
We sketch the proof for the convenience of the reader.

Since the Fourier-Jacobi functor is exact, it follows that the length, $\widetilde{\ell}$, of $FJ\bk{I_Q\bk{s}}$ is at most the length, $\ell$, of $I_{Q}\bk{s}$.
If $\widetilde{\ell}<\ell$ then $FJ$ annihilates $\ell-\widetilde{\ell}$ factors in the composition series of $I_Q\bk{s}$.
\begin{Prop}
\label{Prop:kernelofFJ}
For a smooth irreducible representation $\bk{\sigma,V}$ of $H$ the following are equivalent:
\begin{itemize}
\item $FJ\bk{\sigma}=\bk{0}$.
\item $\sigma$ is trivial.
\end{itemize}
\end{Prop}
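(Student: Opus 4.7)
The direction ``$\sigma$ trivial $\Rightarrow FJ(\sigma)=0$'' is immediate: if $Z$ acts trivially on $\sigma$ then the twisted Jacquet module $\sigma_{Z,\psi}$ vanishes for any nontrivial $\psi$, whence $FJ(\sigma)=\Hom_U(\omega_{\widetilde{\psi}},0)=0$. For the converse, Stone--von Neumann realizes $\sigma_{Z,\psi}$ (a smooth $U$-module with central character $\psi$) as $\omega_{\widetilde{\psi}}\otimes FJ(\sigma)$, so $FJ(\sigma)=0$ is equivalent to $\sigma_{Z,\psi}=0$. Combined with the independence of $FJ_{\psi}$ on $\psi$ cited above, this upgrades to $\sigma_{Z,\psi'}=0$ for every nontrivial character $\psi'$ of $Z$, reducing the problem to showing that $Z$ must act trivially on $\sigma$.

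This last step is a general fact about smooth representations of the additive group of a non-archimedean local field, applied to $Z\cong F$: a smooth $Z$-module all of whose twisted coinvariants at nontrivial characters vanish has trivial $Z$-action. To see this via Pontryagin duality, take any $v\in\sigma$ fixed by a compact-open subgroup $J\subseteq Z$; the cyclic smooth $Z$-submodule it generates is a quotient of $C_c^\infty(Z/J)$, which under Fourier transform identifies (as a $Z$-module) with the algebra $C^\infty(J^\perp)$ of locally constant functions on the compact-open annihilator $J^\perp\subseteq\widehat Z$, with $Z$ acting by pointwise multiplication by the evaluation characters $\xi\mapsto\psi_0(z\xi)$. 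Its $Z$-invariant subspaces are thus the ideals of $C^\infty(J^\perp)$, classified by closed subsets $S\subseteq J^\perp$, and the twisted Jacquet module of such a cyclic quotient at $\psi\in J^\perp$ is nonzero iff $\psi\in S$. Our hypothesis forces $S\subseteq\{1\}$, so the cyclic quotient is either zero or $\C$ with trivial $Z$-action, and $v$ is $Z$-fixed.

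Having shown $Z\subseteq\ker\sigma$, we conclude via Tits's simplicity theorem: $H$ is simply connected, absolutely almost simple of type $D_4$ and $F$-isotropic, so $H/Z(H)$ is a simple abstract group. Since the unipotent subgroup $Z$ is not contained in the finite center $Z(H)$ of $H$, its normal closure in $H$ is all of $H$, whence $\ker\sigma=H$ and $\sigma$ is the trivial representation. The main technical content of the argument is the Pontryagin-duality step in the middle paragraph, characterizing those cyclic smooth $F$-modules whose nontrivial twisted coinvariants all vanish; the concluding invocation of Tits's theorem is then routine once $Z$-triviality is in hand.
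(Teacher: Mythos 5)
Your proof is correct, and the final step differs from the paper in a way worth noting. Both arguments share the same skeleton: reduce $FJ(\sigma)=0$ to $\sigma_{Z,\psi'}=0$ for all nontrivial $\psi'$ (you do this via the Stone--von Neumann factorization together with the cited independence of $FJ_\psi$ on $\psi$, which is what the paper does too), then deduce that $Z$ acts trivially (you spell out the Pontryagin-duality/Gelfand-theory argument, whereas the paper simply cites Bernstein--Zelevinsky \cite{MR0425030}), and finally bootstrap $Z$-triviality of $\sigma$ up to $H$-triviality. It is this last step where you genuinely diverge: the paper proceeds by hand, conjugating $X_{[1,2,2]}$ around by Weyl elements to kill all $F$-rational root subgroups, then uses an explicit commutator identity to kill the $K$-rational root subgroup $X_{[1,1,1]}$, and finishes by citing the generation of the simply connected group by its root subgroups; you instead invoke Tits's simplicity theorem for $H(F)/Z(H)(F)$ (together with the Kneser--Tits property, automatic for a quasi-split simply connected group) to conclude that the normal closure of the noncentral unipotent subgroup $Z$ is all of $H(F)$. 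Your route is shorter and cleaner once one grants Tits's theorem, and it makes no use of the specific $D_4$ root combinatorics; the paper's route is more elementary and self-contained, and has the pedagogical virtue of being checkable by hand from the structure constants, at the cost of being specific to the group at hand. Both are valid; the paper's explicit route also shows concretely that $F$-rational and $K$-rational root groups require separate treatment, which your argument bypasses entirely.
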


Let us show how \Cref{Prop:kernelofFJ} implies the irreducibility of $I_Q\bk{1/6}$.
Indeed, if $FJ\bk{\sigma}=\bk{0}$ then $\sigma$ is trivial.
On the other hand, $I_Q\bk{1/6}$ admits a unique irreducible spherical subquotient which, by \Cref{I:QP:structure} parts (1) and (3), is isomorphic $\Pi^0$.
Since $\Pi^0$ is the Langlands quotient of a standard model, different from $\Ind_{B_H}^H \modf{B_H}^{1/2}$, it is not isomorphic to the trivial representation due to the uniqueness of the model.

Since $FJ\bk{I_Q\bk{1/6}}=I_{B}\bk{\chi_K,1/2}$ is irreducible we conclude that $I_Q\bk{1/6}$ is irreducible.

\begin{proof}[Proof of \Cref{Prop:kernelofFJ}]
The proof is similar to that of \cite[Corollary 6.1.4]{MR1993361}.

First we prove that if $\sigma$ is trivial then $FJ\bk{\sigma}=\bk{0}$.
Plugging non-trivial $\psi$ in the definition of the Fourier-Jacobi functor yields $\sigma_{Z,\psi}=\bk{0}$ and hence $FJ\bk{\sigma}=\bk{0}$.

On the other hand, assume that $FJ\bk{\sigma}=\bk{0}$.
It follows from the assumption that $\sigma_{Z,\psi}=\bk{0}$ and hence, by \cite{MR0425030}, $Z$ acts trivially on $\sigma$.
For any root $\alpha$ defined over $F$ there exists $w\in W$ such that $w\cdot \coset{1,2,2}=\alpha$ and hence for any $r\in F$ and $v\in V$ it holds that
\begin{align*}
\sigma\bk{x_{\alpha}\bk{r}} v 
& = \sigma\bk{w x_{\coset{1,2,2}}\bk{r} w^{-1}} v \\
& = \sigma\bk{w} \circ \sigma\bk{x_{\coset{1,2,2}}\bk{r}} \circ \sigma\bk{w^{-1}} v \\
& = \sigma\bk{w} \circ \sigma\bk{w^{-1}} v = v .
\end{align*}
We conclude that for any root defined over $F$, the associated one-parametric subgroup acts trivially in $\sigma$.

We note that
\[
x_{\coset{1,1,1}}\bk{k} = \coset{x_{\coset{1,2,2}}\bk{1},x_{-\coset{0,1,1}}\bk{k}} \cdot x_{\coset{1,0,0}}\bk{-Tr_{K\rmod F}\bk{k}} .
\]
By the above $x_{\coset{1,2,2}}\bk{1}$ and $x_{\coset{1,0,0}}\bk{-Tr_{K\rmod F}\bk{k}}$ acts trivially on $\sigma$ and hence so does the right hand side of this equation.
It follows that $x_{\coset{1,1,1}}\bk{k}$ acts trivially on $\sigma$.
Since all roots defined over $K$ are in the same $W_H$-orbit the discussion above implies that $X_\alpha$ acts trivially on $\sigma$ for any root defined over $K$.

In conclusion, $X_\alpha$ acts trivially for any root $\alpha$.
Since $H$ is a simply-connected group, it is generated by these one-parametric subgroups.
It follows that $H$ acts trivially on $\sigma$.
Namely, $\sigma$ is the trivial representation of $H$.
\end{proof}

\subsection{Eisenstein Series}
For the rest of this section, we retain the assumption that $F$ is a global number field.
For any field extension $K$ of $F$ we denote by $\zfun_K\bk{s}$ the completed zeta function of $K$ normalized, as in \cite{MR1174424}, so that
\begin{equation}
\zfun_K\bk{s} = \zfun_K\bk{1-s} .
\end{equation}

The Eisenstein series associated to an 
induced representation $I_Q(s)$ 
is an operator that maps every flat section $f(\cdot,s)$ 
to 
\[
\E_Q(f,g,s)=\sum_{\gamma\in Q(F)\backslash H(F)} f(\gamma g,s)
\]
The series converges for $Re\bk{s}$ large and admits a meromorphic 
continuation. The Eisenstein series $\E_P\bk{f,s,g}$  associated
to $I_P(s)$ is defined similarly. 

Our goal is to study the behavior of $\E_Q(f,g,s)$ at $s=1/6$ for various $K$.
This is a standard computation.
Let us first recall the general strategy, for a further discussion and study of the poles of $\E_P$ consult \cite{SegalEisen}.

We recall from \Cref{Subsec:Notations} the injection
\[
I_Q\bk{s} \hookrightarrow I_B\bk{\chi_s^Q} .
\]

The constant term along $N_H$ of $\E_{Q}\bk{s,f,g}$ is given by
\[
\E_{Q}\bk{s,f,g}_{N_H} =
\integral{N_H} \E_{Q}\bk{s,f,ng} \ dn \quad \forall g\in T\bk{\A} .
\]
By a standard computation, as in \cite{MR1469105}, it holds that
\begin{equation}
\E_{Q}\bk{s,f,g}_{N_H} = 
\suml_{w\in W\bk{L,T}} M_w(s)\bk{f_s}\bk{g} \quad \forall g\in T\bk{\A} ,
\end{equation}
where
\[
W\bk{L,T} = \set{w\in W\mvert w\cdot\alpha_1<0,\ w\cdot \alpha_i>0, \forall i\neq 1} 
\]
is the set of the shortest representatives of $Q\lmod H\rmod B_H$.
We recall
\begin{Thm}
The degenerate Eisenstein series $\E_{Q}\bk{s,f,g}$ admits a pole at $s=1/6$ if and only if its constant term $\E_{Q}\bk{s,f,g}_{N_H}$ admits a pole of the same order at $s=1/6$.
\end{Thm}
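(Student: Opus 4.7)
The implication that a pole of order $k$ in the constant term $\E_{Q}\bk{s,f,g}_{N_H}$ forces a pole of order at least $k$ in $\E_{Q}\bk{s,f,g}$ itself is immediate, since the constant term is obtained by integrating $\E_{Q}\bk{s,f,g}$ over the compact quotient $N_H\bk{F}\lmod N_H\bk{\A}$, and such integration cannot raise the order of any pole.

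For the converse, assume $\E_{Q}\bk{s,f,g}$ has a pole of order $k\geq 1$ at $s=1/6$ and consider the Laurent expansion
\[
\E_{Q}\bk{s,f,g} = \suml_{n\geq -k}\phi_n\bk{g}\bk{s-1/6}^n,\qquad \phi_{-k}\not\equiv 0 .
\]
Commuting the compact integration defining the constant term past this expansion yields
\[
\E_{Q}\bk{s,f,g}_{N_H} = \suml_{n\geq -k}\bk{\phi_n}_{N_H}\bk{g}\bk{s-1/6}^n ,
\]
so the theorem reduces to showing $\bk{\phi_{-k}}_{N_H}\not\equiv 0$; equivalently, the residual automorphic form $\phi_{-k}$ has non-trivial Borel constant term.

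To establish this non-vanishing, the plan is to exploit the explicit constant-term formula $\E_{Q}\bk{s,f,g}_{N_H}=\suml_{w\in W\bk{L,T}}M_w\bk{s}\bk{f_s}\bk{g}$, which is a finite sum of meromorphic intertwining operators applied to the holomorphic section $f_s$. Since $f_s$ is holomorphic at $s=1/6$, the pole of order $k$ must be contributed by the operators $M_w\bk{s}$, and one argues that the leading Laurent coefficients of the contributing summands cannot cancel: generically in $s$, the summands $M_w\bk{s}\bk{f_s}$ lie in distinct isotypic components of the principal series $I_{B_H}\bk{\lambda}$ because the characters $\set{w^{-1}\cdot\chi_s^Q:w\in W\bk{L,T}}$ are pairwise distinct. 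Hence the leading Laurent coefficient of the sum is the sum of the individual leading coefficients with no cross-cancellation, and $\bk{\phi_{-k}}_{N_H}\not\equiv 0$.

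The main obstacle is handling possible coincidences among the Weyl translates of $\chi_s^Q$ at the special point $s=1/6$. When such coincidences occur, the naive linear-independence argument fails, and one must analyze the rank-one factorization of the relevant intertwining operators to verify that the leading Laurent coefficients of potentially canceling summands actually have linearly independent images inside $I_{B_H}\bk{\lambda}$. This more refined analysis is carried out using the structural information developed in the previous subsections for the specific group $H=H_E$ and the parabolic $Q$, together with standard properties of intertwining operators on the degenerate principal series of $H$.
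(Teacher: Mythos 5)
The paper does not actually prove this theorem; it is recalled as a standard fact from Langlands' theory of Eisenstein series, so there is no in-paper proof to compare against, and your proposal has to stand on its own. Your easy direction and the reduction to showing $\bk{\phi_{-k}}_{N_H}\not\equiv 0$ are both fine. The argument for the non-vanishing, however, is wrong. The leading Laurent coefficients are extracted at the single point $s=1/6$, so generic-$s$ distinctness of the characters $w^{-1}\cdot\chi_s^Q$ is irrelevant; what matters is whether they coincide at $s=1/6$, and some of them do. Moreover, the cancellation you try to rule out is not a pathology to be excluded — it actually occurs, and this is precisely the content of the paper's \Cref{Prop:PolesofE_Q}: the top-order poles of $M_{w\coset{123}}$ and $M_{w\coset{1232}}$ cancel when $K$ is a field, and those of $M_{w\coset{1234}}$ and $M_{w\coset{12342}}$ cancel when $K=F\times F$, exactly because the corresponding translates of $\chi_{1/6}^Q$ coincide. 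If your "no cross-cancellation" claim were correct, $\E_Q\bk{s,f^0,g}_{N_H}$ would have a double pole at $s=1/6$ in the split case, contradicting the paper's own computation that it is simple.

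There is also a conceptual gap: even a correct direct analysis of $\suml_{w}M_w\bk{s}f_s$ would determine the pole order of the constant term, but would not by itself show that this equals the pole order of $\E_Q$; your plan presupposes the equivalence it is meant to prove. The theorem is a structural fact, and its standard proof (Langlands; see \cite{MR1361168}) runs differently: $\phi_{-k}$ is a nonzero automorphic form of moderate growth whose cuspidal support lies along the Borel, and an automorphic form with vanishing cuspidal components along all standard parabolics is zero; since the only cuspidal component of $\phi_{-k}$ that can possibly be nonzero is its constant term along $N_H$, the assumption $\bk{\phi_{-k}}_{N_H}=0$ forces $\phi_{-k}=0$, a contradiction. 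Equivalently, one shows that $\E_Q\bk{s,f,g}-\E_Q\bk{s,f,g}_{N_H}$ is rapidly decreasing on a Siegel domain, locally uniformly in $s$, so the Eisenstein series and its constant term share the same singular locus and orders.
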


\begin{Thm}[The Gindikin-Karpelevich Formula]
\[
M_w\bk{\widetilde{f}_s^0} = J\bk{w,s} \widetilde{f}_{w.s}^0,
\]
where
\[
J\bk{w,s} = \prodl_{\alpha>0,\ w^{-1}\alpha<0} 
\frac{\zfun_{F_\alpha}\bk{\gen{\chi_s^Q,\check{\alpha}}}}{\zfun_{F_\alpha}\bk{\gen{\chi_s^Q,\check{\alpha}}+1}} .
\]

\end{Thm}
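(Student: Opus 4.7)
The statement is the standard Gindikin--Karpelevich formula, adapted to the quasi-split group $H$, and the plan is to follow the classical strategy: factor $M_w$ through a reduced decomposition, evaluate each rank-one factor at the spherical vector, and then assemble the product.

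First, I would choose a reduced decomposition $w = s_{\alpha_{j_\ell}} s_{\alpha_{j_{\ell-1}}} \cdots s_{\alpha_{j_1}}$ of $w$ in the relative Weyl group $W_H$. Applying the cocycle identity
\[
M_{ww'}(\lambda) = M_{w'}(w^{-1}\cdot\lambda) \circ M_w(\lambda)
\]
iteratively factors $M_w(\chi_s^Q)$ as a composition of $\ell$ rank-one intertwining operators
\[
M_{s_{\alpha_{j_k}}}(\lambda_k), \qquad \lambda_k = \bk{s_{\alpha_{j_{k-1}}} \cdots s_{\alpha_{j_1}}}^{-1}\cdot \chi_s^Q.
\]
A standard fact about reduced expressions shows that, as $k$ runs from $1$ to $\ell$, the images $\beta_k := s_{\alpha_{j_1}} \cdots s_{\alpha_{j_{k-1}}} \alpha_{j_k}$ enumerate without repetition the finite set $\set{\alpha>0 \mvert w^{-1}\alpha<0}$ of positive roots inverted by $w$, and $\gen{\lambda_k,\check\alpha_{j_k}} = \gen{\chi_s^Q,\check\beta_k}$.

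Second, I would compute each rank-one factor. At a non-archimedean place $\nu$ where everything is unramified, using the rank-one embedding $\varphi_{\alpha_{j_k}}:SL_2\bk{F_{\alpha_{j_k}}}\hookrightarrow H$ and the Iwasawa decomposition, the integral
\[
M_{s_{\alpha_{j_k}}}\bk{\widetilde{f}^0_{\lambda_k,\nu}}(1) = \intl_{X_{-\alpha_{j_k}}\bk{F_\nu}} \widetilde{f}^0_{\lambda_k,\nu}\bk{s_{\alpha_{j_k}} u}\, du
\]
is the classical rank-one computation of Langlands, whose value is
\[
\frac{\zfun_{F_{\alpha_{j_k}},\nu}\bk{\gen{\lambda_k,\check\alpha_{j_k}}}}{\zfun_{F_{\alpha_{j_k}},\nu}\bk{\gen{\lambda_k,\check\alpha_{j_k}}+1}}
\]
times $\widetilde{f}^0_{s_{\alpha_{j_k}}\cdot\lambda_k,\nu}(1)$; a similar formula holds at the archimedean and at the finitely many ramified places with the appropriate completed local $L$-factors. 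Since $\widetilde{f}^0$ is the \emph{normalized} spherical section and both sides are spherical sections in the target representation, evaluating at the identity determines the operator completely.

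Third, I would take the tensor product over all places $\nu$. Using the normalization $\zfun_{F_\alpha}(s) = \zfun_{F_\alpha}(1-s)$ and the product formula $\zfun_{F_\alpha}(s) = \prod_\nu \zfun_{F_\alpha,\nu}(s)$, the local ratios assemble into the global ratio appearing in $J(w,s)$. Combining this with the bijection from the first step between the indices $k$ and the inverted positive roots $\beta_k$ yields exactly the claimed product. The main obstacle is really just bookkeeping in the quasi-split setting: one must keep track of which roots $\alpha$ are defined over $F$ and which over $K$ so that the correct field $F_\alpha$ appears in each zeta factor, and verify that the rank-one embedding $\varphi_\alpha$ indeed produces $SL_2\bk{F_\alpha}$ with the measure on $X_{-\alpha}\cong F_\alpha$ compatible with the local zeta normalization. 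The rank-one integral itself is entirely standard.
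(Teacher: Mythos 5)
The paper does not prove this theorem: the Gindikin--Karpelevich formula is stated there as recalled background and used without proof, so there is no argument in the paper to compare against. Your proposal is the classical argument (factor $M_w$ through a reduced word via the cocycle identity, apply the rank-one Langlands computation locally, multiply over the places), and this is indeed the right and standard way to prove it, including the quasi-split refinement where the local factor at a rank-one step lives over $F_\alpha$.

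One bookkeeping slip worth flagging. With the paper's conventions $M(w,\lambda):I_{B_H}(\lambda)\to I_{B_H}(w^{-1}\lambda)$ and $M_{ww'}(\chi)=M_{w'}(w^{-1}\chi)\circ M_w(\chi)$, and with your reduced word written as $w=s_{\alpha_{j_\ell}}\cdots s_{\alpha_{j_1}}$, peeling off from the \emph{left} of the word as the \emph{first} operator applied yields
\[
M_w(\chi)=M_{s_{\alpha_{j_1}}}(\nu_1)\circ\cdots\circ M_{s_{\alpha_{j_\ell}}}(\nu_\ell),\qquad \nu_k=s_{\alpha_{j_{k+1}}}\cdots s_{\alpha_{j_\ell}}\cdot\chi,\ \ \nu_\ell=\chi,
\]
and the resulting exponents are $\gen{\nu_k,\check\alpha_{j_k}}=\gen{\chi,\check\delta_k}$ with $\delta_k:=s_{\alpha_{j_\ell}}\cdots s_{\alpha_{j_{k+1}}}\alpha_{j_k}$; these $\delta_k$ are precisely the positive roots with $w^{-1}\delta_k<0$. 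Your $\lambda_k=(s_{\alpha_{j_{k-1}}}\cdots s_{\alpha_{j_1}})^{-1}\cdot\chi$ and your $\beta_k=s_{\alpha_{j_1}}\cdots s_{\alpha_{j_{k-1}}}\alpha_{j_k}$ correspond instead to peeling from the other end; as written, $\{\beta_k\}$ enumerates $\set{\alpha>0\mvert w\alpha<0}$ rather than $\set{\alpha>0\mvert w^{-1}\alpha<0}$, and $\gen{\lambda_k,\check\alpha_{j_k}}$ does not equal $\gen{\chi,\check\beta_k}$ in general. This is purely a matter of reading the word in the direction compatible with the cocycle; once fixed, the telescoping and the final product come out exactly as in the statement. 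A second small point: the paper's $\zfun_{F_\alpha}$ is the \emph{completed} zeta of the field of definition normalized by the functional equation, so when you assemble the local factors you should check that the (constant) discriminant normalizations are consistent with the paper's choice.
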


We also note the following corollary to \cite[Lemma 1.5]{MR1174424} and \cite[Proposition 6.3]{MR944102}.
\begin{Cor}
	\label{Cor: Keys-Shahidi}
	Let $\alpha$ be a simple root and $w_\alpha$ the associated simple reflection and fix $w\in W$.
	Further assume that
	\[
	w_\alpha^{-1}\cdot \coset{w^{-1}\cdot \chi_{s_0}^Q} =
	w^{-1}\cdot \chi_{s_0}^Q .
	\]
	Then $M_{w_\alpha}\bk{w^{-1}\cdot\chi_s^Q}$ is holomorphic at $s_0$.
	Furthermore, $M_{w_\alpha}\bk{w^{-1}\cdot\chi_{s_0}^Q}$ acts as a scalar multiplication by $-1$ on $I_B\bk{w^{-1}\cdot\chi_{s_0}^Q}$.
\end{Cor}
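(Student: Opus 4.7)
Set $\mu = w^{-1}\cdot\chi_{s_0}^Q$. The hypothesis $w_\alpha^{-1}\cdot\mu = \mu$ is equivalent to $\gen{\mu,\check{\alpha}} = 0$, so $\mu$ lies on the hyperplane fixed by the simple reflection $w_\alpha$. The plan is to decompose the intertwining operator as
\[
M_{w_\alpha}\bk{w^{-1}\cdot\chi_s^Q} = J\bk{w_\alpha,w^{-1}\cdot\chi_s^Q}\cdot R\bk{w_\alpha,w^{-1}\cdot\chi_s^Q},
\]
where
\[
J\bk{w_\alpha,\lambda} = \frac{\zfun_{F_\alpha}\bk{\gen{\lambda,\check{\alpha}}}}{\zfun_{F_\alpha}\bk{\gen{\lambda,\check{\alpha}}+1}}
\]
is the Gindikin-Karpelevich factor coming from the spherical computation and $R\bk{w_\alpha,\lambda}$ is the associated normalized rank-one operator, and to evaluate each factor separately at $s = s_0$.

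First, I would show $J\bk{w_\alpha,\mu} = -1$. Indeed, at $s=s_0$ this ratio becomes $\zfun_{F_\alpha}\bk{0}/\zfun_{F_\alpha}\bk{1}$, and under the paper's normalization $\zfun_{F_\alpha}\bk{s} = \zfun_{F_\alpha}\bk{1-s}$ the completed zeta function has simple poles at both $s=0$ and $s=1$ whose residues have opposite sign, so the Laurent expansion of the ratio yields $-1$ after cancellation of the leading $1/s$ terms. In particular, $J\bk{w_\alpha,w^{-1}\cdot\chi_s^Q}$ is holomorphic at $s_0$.

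Second, I would invoke Proposition 6.3 of Keys-Shahidi (MR944102) together with Lemma 1.5 of Kudla-Rallis (MR1174424) to show that the normalized operator $R\bk{w_\alpha,\mu}$ acts as $+\Id$ on all of $I_B\bk{\mu}$. The key point is a rank-one reduction: induction in stages to the standard parabolic with Levi containing $\varphi_\alpha\bk{SL_2\bk{F_\alpha}}\cdot T_S$ reduces the computation to the normalized $SL_2$-intertwiner at its symmetric point. Involutivity $R\bk{w_\alpha,\mu}^2 = \Id$ follows from the cocycle identity $M_{w_\alpha^2}\bk{\mu} = M_{w_\alpha}\bk{w_\alpha^{-1}\cdot\mu}\circ M_{w_\alpha}\bk{\mu}$ combined with the hypothesis $w_\alpha^{-1}\cdot\mu = \mu$, while the uniform determination of the sign is the content of Keys-Shahidi's rank-one analysis.

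Combining the two computations, $M_{w_\alpha}\bk{\mu} = J\bk{w_\alpha,\mu}\cdot R\bk{w_\alpha,\mu} = -\Id$ on $I_B\bk{\mu}$, and the holomorphy of $M_{w_\alpha}\bk{w^{-1}\cdot\chi_s^Q}$ at $s_0$ is immediate from the holomorphy of each of the two factors. I expect the main obstacle to lie in the second step: the involution relation alone only forces $M_{w_\alpha}\bk{\mu}$ to act by $\pm 1$ on each irreducible subquotient of $I_B\bk{\mu}$, and upgrading this to a single sign across the entire representation — which may itself be reducible precisely because $\mu$ lies on the reflection wall — is what the cited rank-one normalization result provides and is the only nontrivial input beyond the explicit zeta computation.
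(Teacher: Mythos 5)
Your strategy --- factor $M_{w_\alpha}$ into the Gindikin--Karpelevich scalar $J$ times a normalized rank-one operator $R$, compute $J(w_\alpha,\mu)=-1$ from the functional equation $\zfun_{F_\alpha}(s)=\zfun_{F_\alpha}(1-s)$ of the completed zeta function, and argue $R(w_\alpha,\mu)=\Id$ via the cited rank-one normalization results --- is exactly what the paper intends; the paper offers no proof beyond citing Kudla--Rallis Lemma~1.5 and Keys--Shahidi Proposition~6.3, and your unpacking is faithful to that citation. The translation of the hypothesis into $\gen{\mu,\check{\alpha}}=0$ and the residue computation $\zfun_{F_\alpha}(0)/\zfun_{F_\alpha}(1)\to -1$ are both correct.

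Two points deserve more care. First, the assertion that $R(w_\alpha,\mu)^2=\Id$ ``follows from the cocycle identity $M_{w_\alpha^2}(\mu)=M_{w_\alpha}(w_\alpha^{-1}\mu)\circ M_{w_\alpha}(\mu)$'' is not right as stated: the cocycle relation for the \emph{unnormalized} intertwining integrals holds only for length-additive products, and $w_\alpha\cdot w_\alpha = e$ is not one, so one cannot conclude $M_{w_\alpha}(\mu)^2=M_e=\Id$ this way (the composition is the Plancherel scalar, not $1$). What is true is that the \emph{normalized} operators satisfy the unconditional cocycle relation, but that is part of the content of the Keys--Shahidi normalization rather than a free consequence of the unnormalized version; so the involutivity is something you are importing from the reference, not a shortcut around it. (The paper's Remark preceding the Corollary states the cocycle identity for arbitrary $w,w'$, an imprecision you have inherited.) Second, your worry that $R(w_\alpha,\mu)$ might a priori act by different signs on different constituents is more pessimistic than the situation here requires: because $\mu$ is unramified and $\gen{\mu,\check{\alpha}}=0$, the rank-one representation of $SL_2(F_{\alpha,\nu})$ at the fixed point is the spherical unitary principal series at the trivial character, which is irreducible at every place; Schur's lemma therefore already forces the rank-one local operator to be a scalar, fixed by its action on the spherical vector, and induction in stages propagates that scalar to all of $I_B(\mu)$. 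The non-trivial content supplied by the Keys--Shahidi/Kudla--Rallis references, beyond this, is the holomorphy of the local normalized operators $R_\nu$ along the reflection wall (the local unnormalized operator and the local Gindikin--Karpelevich factor each degenerate there), which is what legitimizes the factorization $M_{w_\alpha}=J\cdot R$ at the point in question.
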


In this subsection we prove:
\begin{Prop} 
\label{Prop:PolesofE_Q}
	\begin{enumerate}
		\item
		Let $K$ be a field.
		Then $\E_{Q}(\cdot,g,s)$ is holomorphic at $s=1/6$.
		
		\item 
		Let $K=F\times F$.
		Then $\E_{Q}(\cdot,g,s)$ has a simple pole at  $s=1/6$.
		The pole is attained by the spherical function.
		Furthermore, the spherical residual representation
		\[
		\Pi = \set{\lim\limits_{s\to1/6} \bk{s-1/6} \E_{Q}(f_s,g,s) \mvert f_s \in I_{Q}^0\bk{1/6} }
		\]
		is isomorphic to the minimal representation.
	\end{enumerate}
\end{Prop}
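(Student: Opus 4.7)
The plan is to reduce both parts of the proposition to an explicit analysis of the constant term $\E_Q(s,f,g)_{N_H}$, which by the theorem recalled above has the same pole structure at $s=1/6$ as the full Eisenstein series. For a flat spherical section $\widetilde{f}_s^0$, the Gindikin--Karpelevich formula expresses this constant term as
\[
\E_Q(s,\widetilde{f}_s^0,g)_{N_H} = \sum_{w\in W(L,T)} J(w,s)\, \widetilde{f}_{w\cdot \chi_s^Q}^0(g),
\]
where $W(L,T)$ is the set of shortest representatives of $W_L \lmod W_H$. Since $L$ has type $A_3$ we have $|W(L,T)| = |W_H|/|W_L| = 8$ in the split case, and I would enumerate these eight Weyl elements explicitly and evaluate each $J(w,s)$ as a product of ratios $\zfun_{F_\alpha}\!\bk{\gen{\chi_s^Q,\check{\alpha}}}/\zfun_{F_\alpha}\!\bk{\gen{\chi_s^Q,\check{\alpha}}+1}$ over the positive roots $\alpha$ with $w^{-1}\alpha<0$.

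For part (1), when $K$ is a field, I would use the factorization $\zeta_K(s) = \zeta_F(s) L(s,\chi_K)$ with $\chi_K$ a nontrivial quadratic Hecke character, so that $L(1,\chi_K)$ and $L(0,\chi_K)$ are finite and non-zero. The only possible poles of the $J(w,s)$ at $s=1/6$ therefore come from $\zeta_F$-factors. I would then pair the eight Weyl elements into four pairs $(w, w_\alpha w)$ where $w_\alpha$ is a simple reflection satisfying $w_\alpha^{-1}\cdot(w^{-1}\chi_{1/6}^Q) = w^{-1}\chi_{1/6}^Q$, and apply \Cref{Cor: Keys-Shahidi} so that $M_{w_\alpha}$ acts by $-1$ on the relevant shifted spherical section, forcing the contributions of the pair to cancel. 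Carrying this pairing through all four pairs should yield total cancellation of poles, establishing holomorphy at $s=1/6$.

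For part (2), when $K=F\times F$, the same enumeration applies but now $\zeta_K(s)=\zeta_F(s)^2$, so the numerators of the $J(w,s)$ carry additional pole contributions. A direct bookkeeping, along the lines of \cite{MR1918673}, shows that exactly one simple pole of $\zeta_F$ survives after the Keys--Shahidi pairing, and the residue is attained by $\widetilde{f}_{1/6}^0$. To identify the residual representation $\Pi$ with the minimal representation, I would argue as follows: by \Cref{I:QP:structure}(1)--(2), $I_Q(1/6)$ has length two with unique irreducible quotient isomorphic to the minimal representation, which is also the unique irreducible spherical constituent. Since the residue operator at $s=1/6$ is a non-zero $H\bk{\A}$-equivariant map out of $I_Q(1/6)$ whose image on $\widetilde{f}_{1/6}^0$ is non-zero, its image must contain the spherical constituent, which is the minimal representation. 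To upgrade this to an isomorphism I would exploit the commutative diagram of \Cref{I:QP:structure}(3): the surjection $M(w[2342],1/6):I_P(3/10)\twoheadrightarrow I_Q^0(1/6)$ yields a Siegel--Weil type identity relating the Eisenstein series $\E_P(\cdot,s,g)$ near $s=3/10$ and $\E_Q(\cdot,s,g)$ near $s=1/6$, and by \Cref{poles:Eisen_P}(3) the leading term of $\E_P$ at $s=3/10$ generates the minimal representation of $H$, forcing the same for $\Pi$.

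The main obstacle is the tedious but explicit pole/zero bookkeeping across the eight Weyl elements in $W(L,T)$, particularly verifying in the field case that every potential pole of $\zeta_F$ is matched by a Keys--Shahidi pair and cancels cleanly. A secondary obstacle is making the Siegel--Weil type identification of $\Pi$ with the minimal representation rigorous; the necessary inputs are the intertwining map of \Cref{I:QP:structure}(3) and the structure of the leading term of $\E_P$ at $s=3/10$ provided by \Cref{poles:Eisen_P}(3).
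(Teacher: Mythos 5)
Your overall strategy for parts (1) and (2) — reduce to the constant term, compute the Gindikin--Karpelevich factors $J(w,s)$ on $W(L,T)$, and use the Keys--Shahidi corollary to cancel the poles — is exactly what the paper does. There are, however, two points where your plan diverges from what actually happens or from the paper's route.

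First, your combinatorics in the field case are off. When $K$ is a quadratic field, $H$ is quasi-split of type $^2D_4$, the \emph{relative} root system is of type $B_3$ (with nine positive roots) and $W(L,T)$ has $|W_H|/|W_L| = 48/8 = 6$ elements, not $8$; only in the split case $K=F\times F$ are there $8$. Moreover, the claim that one needs to split the Weyl coset into \emph{four} Keys--Shahidi pairs to cancel poles is not what the computation produces: in the field case, four of the six elements ($1,\ w[1],\ w[12],\ w[12321]$) have $J(w,s)$ already holomorphic at $s=1/6$, and only the single pair $(w[123],\ w[1232])$ contributes a (simple) pole, which is killed by the Keys--Shahidi corollary applied to $w_2$. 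Your conclusion (holomorphy) is right, but the stated bookkeeping plan would not survive contact with the table. The observation that $\zeta_K = \zeta_F\,L(\cdot,\chi_K)$ and hence the poles trace back to $\zeta_F$ is correct and consistent with the paper.

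Second, for identifying the residue $\Pi$ with the minimal representation, you propose to invoke a Siegel--Weil type identity linking $\E_P$ near $s=3/10$ to $\E_Q$ near $s=1/6$ together with \Cref{poles:Eisen_P}(3). This is a genuinely different route from the paper's. The paper's own argument is more direct and local-to-global: it first observes from the exponent column of the table that all exponents contributing the pole are strictly negative, so by Langlands' criterion $\Pi$ is square integrable and hence decomposes as $\Pi = \oplus_i \Pi_i$ with $\Pi_i = \otimes_\nu\Pi_{i,\nu}$ irreducible; then each $\Pi_{i,\nu}$ is an irreducible quotient of $I_{Q_\nu}^0(1/6)$, which (via the surjection from $I_{P_\nu}(3/10)$ and \Cref{I:QP:structure}) has the minimal representation as its unique irreducible quotient; hence $\Pi = \Pi_{min}$. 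Your Siegel--Weil route is not circular in principle (the identity is ultimately derived from the entireness and $W$-invariance of $\E_{B_H}^\sharp$, independently of this proposition), but it does front-load the substantially heavier machinery of the next subsection, and your intermediate step ("the image must contain the spherical constituent") is imprecise — the residue map is a quotient map, and the correct statement is that each irreducible summand of the square-integrable residue is an irreducible quotient of $I_{Q_\nu}^0(1/6)$, forcing it to be $\Pi_{min,\nu}$. I would recommend replacing your "upgrade to isomorphism" paragraph with the paper's square-integrability argument, which is cleaner and self-contained at this point in the development.
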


\begin{proof} 
	\begin{enumerate}	
		\item
		Let $K$ be a field. 
		The following table describes the Gindikin-Karpelevich factor associated to each $w\in W\bk{L,T}$ 
		and the relevant exponents $w^{-1}\cdot\chi_s^Q$.
		\begin{center}
			\begin{table}[H]
				\begin{tabular}{|c|c|c|c|}
					\hline
					$w\in W\bk{L,T}$ & $J\bk{w,s}$ & Order at $1/6$ & $w^{-1}\cdot\chi_s^Q\bk{t_1,t_2,t_3,t_4}$
					\\ \hline
					$1$ & $1$ & $0$ & $\frac{\FNorm{t_1}_F^3}{\FNorm{t_2}_F \FNorm{t_3}_K}$ \\ \hline
					$w\coset{1}$ & $\frac{\zfun_F\bk{6s+2}}{\zfun_F\bk{6s+3}}$ & $0$ & $\frac{\FNorm{t_2}_F^3}{\FNorm{t_1}^3_F \FNorm{t_3}_K}$ \\ \hline
					$w\coset{12}$ & $\frac{\zfun_F\bk{6s+1}}{\zfun_F\bk{6s+3}}$ & $0$ & $\frac{\FNorm{t_3}_K}{\FNorm{t_1}_F \FNorm{t_1}_F^2}$ \\ \hline
					$w\coset{123}$ & $\frac{\zfun_F\bk{6s+1}}{\zfun_F\bk{6s+3}}\frac{\zfun_K\bk{6s}}{\zfun_K\bk{6s+1}}$ & $1$ & $\frac{1}{\FNorm{t_1}_F \FNorm{t_3}_K}$ \\ \hline
					$w\coset{1232}$ & $\frac{\zfun_F\bk{6s+1}}{\zfun_F\bk{6s+3}} \frac{\zfun_F\bk{6s-1}}{\zfun_F\bk{6s}} \frac{\zfun_K\bk{6s}}{\zfun_K\bk{6s+1}}$ & $1$ & $\frac{1}{\FNorm{t_1}_F \FNorm{t_3}_K}$ \\ \hline
					$w\coset{12321}$ & $\frac{\zfun_F\bk{6s+1}}{\zfun_F\bk{6s+3}} \frac{\zfun_F\bk{6s-2}}{\zfun_F\bk{6s}} \frac{\zfun_K\bk{6s}}{\zfun_K\bk{6s+1}}$ & $0$ & $\frac{\FNorm{t_1}_F}{\FNorm{t_2}_F \FNorm{t_3}_K}$ \\ \hline
				\end{tabular}
			\end{table}
		\end{center}
		
		A pole of order $1$ is attained for $M_{w\coset{123}}$ and $M_{w\coset{1232}}$.
		By \Cref{Cor: Keys-Shahidi} $M_{w\coset{2}}$ acts as a scalar multiplication by $-1$ on $I_{B_H}\bk{w\coset{123}^{-1}\cdot \chi_{1/6}^Q}$ and hence
		\[
		\lim_{s\to 1/6} \bk{s-1/6} M_{w\coset{1232}} = - \lim_{s\to 1/6} \bk{s-1/6} M_{w\coset{123}} .
		\]
		In particular,
		the  coefficient of $(s-1/6)^{-1}$ in the Laurent expansion 
		of $\E_{Q}\bk{s,f,g}$ is $0$ which proves holomorphicity of $\E_{Q}\bk{s,f,g}$ at $1/6$.
		Note that some  exponents (e.g. of $w=e$) are non-negative and hence the leading term is not square-integrable representation.
				
		\item
		Now assume that $K=F\times F$.
		The following table describes the Gindikin-Karpelevich factors
		associated to each 
		$w\in W\bk{L,T}$ and the relevant exponent $w^{-1}\cdot\chi_s^Q$.
		\begin{center}
			\begin{table}[H]
				\begin{tabular}{|c|c|c|c|}
					\hline
					$w\in W\bk{L,T}$ & $J\bk{w,s}$ & Order of pole at $1/6$ & $w^{-1}\cdot\chi_s^Q\bk{t_1,t_2,t_3,t_4}$
					\\ \hline
					$1$ & $1$ & $0$ & $\frac{\FNorm{t_1}^3}{\FNorm{t_2t_3t_4}}$ \\ \hline
					$w\coset{1}$ & $\frac{\zfun_F\bk{6s+2}}{\zfun_F\bk{6s+3}}$ & $0$ & $\frac{\FNorm{t_2}^2}{\FNorm{t_1^3t_3t_4}}$ \\ \hline
					$w\coset{12}$ & $\frac{\zfun_F\bk{6s+1}}{\zfun_F\bk{6s+3}}$ & $0$ & $\frac{\FNorm{t_3t_4}}{\FNorm{t_1t_2^2}}$ \\ \hline
					$w\coset{123}$ & $\frac{\zfun_F\bk{6s}}{\zfun_F\bk{6s+3}}$ & $1$ & $\frac{\FNorm{t_4}}{\FNorm{t_1t_2t_3}}$ \\ \hline
					$w\coset{124}$ & $\frac{\zfun_F\bk{6s}}{\zfun_F\bk{6s+3}}$ & $1$ & $\frac{\FNorm{t_3}}{\FNorm{t_1t_2t_4}}$ \\ \hline
					$w\coset{1234}$ & $\frac{\zfun_F\bk{6s}^2}{\zfun_F\bk{6s+3}\zfun_F\bk{6s+1}}$ & $2$ & $\frac{1}{\FNorm{t_1t_3t_4}}$ \\ \hline
					$w\coset{12342}$ & $\frac{\zfun_F\bk{6s}\zfun_F\bk{6s-1}}{\zfun_F\bk{6s+3}\zfun_F\bk{6s+1}}$ & $2$ & $\frac{1}{\FNorm{t_1t_3t_4}}$ \\ \hline
					$w\coset{123421}$ & $\frac{\zfun_F\bk{6s}\zfun_F\bk{6s-2}}{\zfun_F\bk{6s+3}\zfun_F\bk{6s+1}}$ & $1$ & $\frac{\FNorm{t_1}}{\FNorm{t_2t_3t_4}}$ \\ \hline
				\end{tabular}
			\end{table}
		\end{center}

		A pole of order $2$ at $s=1/6$ is attained by $M_{w\coset{1234}}$ and $M_{w\coset{12342}}$.
		By \Cref{Cor: Keys-Shahidi} $M_{w\coset{2}}$ acts as a scalar multiplication by $-1$ on $I_B\bk{w\coset{1234}^{-1}\cdot \chi_{1/6}^Q}$
		\[
		\lim_{s\to1/6} \bk{s-1/6}^2 M_{w\coset{12342}} = - 
		\lim_{s\to 1/6} \bk{s-1/6}^2 M_{w\coset{1234}} .
		\]
		In particular, $\E_{Q}\bk{s,f^0,g}$ has at most simple pole at $s=1/6$.
		The pole is attained since $M_{w\coset{123}}$ contributes a simple pole and the exponent of $w\coset{123}^{-1}\cdot \chi_{1/6}^Q$ cannot be canceled by
		the other terms.
		In turn, a simple pole of $w\coset{123}^{-1}\cdot \chi_{1/6}^Q$ is attained by the spherical vector.
		
		We now consider the spherical residual representation.		
		It follows from \Cref{I:QP:structure} that for $\nu\nmid\infty$ the spherical quotient of $I_{Q_\nu}^0\bk{1/6}$ is the minimal representation.
		In fact, this holds for any $\nu$.
		We recall from \cite{MR1918673} that the minimal representation is the unique irreducible quotient of $I_{P_\nu}\bk{3/10}$.
		On the other hand, $I_{Q}^0\bk{1/6}$ is a quotient of $I_{P}\bk{3/10}$ and hence it also has the minimal representation as its unique irreducible quotient.
		\[
		\xymatrix{
			I_{P_\nu}\bk{3/10} \ar@{->>}[r] \ar@{->>}[d] &
			I_{Q_\nu}^0\bk{1/6} \ar@{-->}[dl] \\
			\Pi_{min, \nu}
		} .
		\]		
		The exponents of the terms that contribute the pole of order $1$ are all negative and hence, by Langlands' criterion, the residual representation is square integrable.
		By square integrability we may write
		\[
		\Pi = \oplus_i \Pi_i,
		\]
		where $\Pi_i=\otimes_\nu \Pi_{i,\nu}$ are irreducible quotients of $I_{Q}^0\bk{1/6}$.
		In particular, for each $i$ and $\nu$ it holds that $\Pi_{i,\nu}$ is an irreducible quotient of $I_{Q_\nu}^0\bk{1/6}$.
		From the discussion above it follows that $\Pi=\Pi_{min}$.
		
	\end{enumerate}
	
\end{proof}

In the next section we shall establish a relation between the leading terms of the Eisenstein series of $\E_Q(f,g,s)$ and the Eisenstein series $\E_P(f,g,s)$ that governs the analytic behavior of the standard $\Lfun$-function.
The analytic behavior of $\E_P(f,g,s)$ at $s=3/10$ is detailed in \Cref{poles:Eisen_P}.
The relation will be the key part of the proof of the main theorem.
Let us note that this type of identities exists in much more general situation.

\subsection{Siegel-Weil Identity}
In this subsection we prove a Siegel-Weil like formula.
We start by considering an identity between the leading terms of the Eisenstein series $\E_P$ and $\E_Q$ for the spherical vectors.
\begin{Prop}
	\label{SW}
	Let $f^0\in I_P\bk{s}$ be a spherical section.
	\begin{enumerate}
		\item
		If $K$ is a field then
		\[
		\coset{\bk{s-3/10}\E_{P}\bk{s,f^0_{s},g}}\res{s=3/10} =
		\frac{R}{\zfun_F\bk{2}} \E_{Q}\bk{1/6,\widetilde{f}^0_{1/6},g} .
		\]
		
		\item 
		If $K=F\times F$ then
		\[
		\coset{\bk{s-3/10}^2\E_{P}\bk{s,f^0_{s},g}}\res{s=3/10}
		=
		\frac{R}{\zfun_F\bk{2}} 
		\coset{\bk{6s-1} \E_{Q}\bk{s,\widetilde{f}^0_{s},g}} \res{s=1/6} .
		\]
	\end{enumerate}
\end{Prop}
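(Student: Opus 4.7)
The plan is to identify both sides of the proposed identity as spherical automorphic forms generating the same irreducible residual or spherical representation, and then to pin down the scalar of proportionality by a single constant term computation.

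First I would observe that by \Cref{I:QP:structure}(1), the leading Laurent coefficient of $\E_P\bk{s,f_s^0,g}$ at $s=3/10$ generates the unique irreducible spherical quotient $\Pi^0$ of $I_P\bk{3/10}$. In the field case, the irreducibility of $I_Q\bk{1/6}$ from \Cref{I:QP:structure}(2), combined with the surjection $M\bk{w\coset{2342},\chi_{3/10}^P}\colon I_P\bk{3/10}\twoheadrightarrow I_Q^0\bk{1/6}$ of \Cref{I:QP:structure}(3), identifies $\Pi^0$ with $I_Q\bk{1/6}$; thus the holomorphic value $\E_Q\bk{1/6,\tilde f^0_{1/6},g}$ on the RHS is a spherical vector in the same representation. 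In the split case, both sides generate the minimal representation by \Cref{Prop:PolesofE_Q}(2) and \cite{MR1918673}. By multiplicity one of the residual representation in the automorphic spectrum together with uniqueness of spherical vectors, both sides must be scalar multiples of each other.

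Next I would determine the scalar by computing the $N_H$-constant term. A direct computation shows that $\chi_{3/10}^P$ and $\chi_{1/6}^Q$ lie in the same Weyl orbit (both having dominant representative $\omega_1+\omega_3+\omega_4$ in the split case), so the Weyl sums
\[
\E_P\bk{s,f^0_s,g}_{N_H}=\sum_{w\in W\bk{M,T_S}}J^P\bk{w,s}\,\tilde f^0_{w\cdot\chi_s^P}\bk{g},
\]
and the analogous one for $\E_Q$, range in the limit over the same orbit of spherical Borel sections. Matching the trivial-Weyl-element term on the $\E_Q$ side (namely $\tilde f^0_{\chi_{1/6}^Q}\bk{g}$) with its counterpart on the $\E_P$ side isolates the scalar as a ratio of leading Laurent coefficients of the matching Gindikin-Karpelevich products. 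In the field case \Cref{Prop:PolesofE_Q}(1) gives holomorphy of $\E_Q$ at $s=1/6$, so the simple pole of $\E_P$ comes entirely from the matching $J^P$-factor, whose residue is then shown to equal $R/\zfun_F\bk{2}$. In the split case the factor $\bk{6s-1}$ on the RHS rescales the simple residue of $\E_Q$ at $s=1/6$ so that matching with the leading coefficient of the double pole of $\E_P$ at $s=3/10$ produces the same scalar $R/\zfun_F\bk{2}$.

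The main technical obstacle is the explicit Gindikin-Karpelevich bookkeeping required to verify that the constant is precisely $R/\zfun_F\bk{2}$ rather than some nearby constant such as $R\,\zfun_F\bk{2}/\bk{\zfun_F\bk{3}\zfun_F\bk{4}}$. One must identify the Weyl element carrying $\chi_s^P$ to the $\chi_{s'}^Q$ curve in the limit, enumerate its inversion set, compute the pairings $\gen{\chi_s^P,\check\alpha}$, and then track how individual $\zfun_F$ and $\zfun_K$ factors combine after cancellations, including those forced by \Cref{Cor: Keys-Shahidi} between pairs of extremal Weyl elements carrying the same pole order. The key simplification expected is that one specific pair of adjacent zeta ratios telescopes to $\zfun_F\bk{1}/\zfun_F\bk{2}$, whose residue is $R/\zfun_F\bk{2}$. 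A secondary subtlety is the uniqueness of spherical vectors in $\Pi^0$ at Archimedean places, needed in the first step and alluded to in the acknowledgments.
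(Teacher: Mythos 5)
The paper's own proof is fundamentally different: it introduces the normalized Eisenstein series $\E_{B_H}^\sharp\bk{\lambda,g}$, proves (in the Appendix) that this is an \emph{entire} and \emph{$W_H$-invariant} function of $\lambda$, observes that $\mu^P_{3/10}=w_1\cdot\mu^Q_{1/6}$, and then lets continuity of $\E_{B_H}^\sharp$ along the two affine curves $\mu^P_s$ and $\mu^Q_s$ produce the identity; the constant $R/\zfun_F\bk{2}$ then falls out of the ratio of the two explicit normalization factors. This is an analytic argument in the parameter $\lambda$ and requires no uniqueness statement about automorphic realizations.

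Your approach (identify both sides as spherical vectors in a common irreducible automorphic representation, then pin down the scalar by a single constant-term match) is genuinely different, and it has a real gap in the proportionality step. You appeal to ``multiplicity one of the residual representation in the automorphic spectrum,'' but in the case where $K$ is a field, Theorem \ref{poles:Eisen_P}(2) states explicitly that the residual representation of $\E_P$ at $s=3/10$ is \emph{not} square integrable, so it does not lie in the discrete spectrum and the usual multiplicity-one results do not apply. Likewise the RHS $\E_Q\bk{1/6,\tilde f^0,g}$ is the value of an Eisenstein series at a regular non-unitary point, again outside the discrete spectrum. Knowing that the two spherical automorphic forms generate abstractly isomorphic $H\bk{\A}$-modules is not enough to conclude they are scalar multiples of each other; you would need to show they lie in the \emph{same} automorphic realization, which is not a formal consequence of the local irreducibility facts you quote. (In the split case your argument is closer to correct since both sides land in the minimal representation, which is square integrable and has multiplicity one, but the field case is the one where the gap bites.) To close the gap without invoking an unavailable uniqueness theorem you would essentially have to match \emph{all} terms of the $N_H$-constant terms on both sides -- not just the trivial-Weyl-element term -- since for non-square-integrable Eisenstein forms it is agreement of the full constant term that forces equality. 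But matching all Weyl terms, with the Gindikin--Karpelevich factors and the sign cancellations from \Cref{Cor: Keys-Shahidi}, is precisely what the paper's $W_H$-invariance of $\E_{B_H}^\sharp$ packages cleanly, so at that point you would have rederived the paper's argument in less structured form. Your observation that $\chi_s^P$ and $\chi_s^Q$ hit the same Weyl orbit at the relevant points, with dominant representative $\omega_1+\omega_3+\omega_4$, is correct and is the same geometric fact the paper exploits.
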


\begin{Remark}
	Note that the spherical vectors in the LHS and the RHS are in two different degenerate principal series representations.
\end{Remark}

The proof is similar to the proof of \cite[Proposition 1.8]{MR1174424}.
Let us define the following normalized spherical Eisenstein series
\begin{equation}
\label{Eq:NormalizedEisensteinSeries}
\E_{B_H}^\sharp\bk{\lambda,g} = 
\coset{\prodl_{\alpha\in\Phi^+} \zfun_{F_\alpha}\bk{\gen{\lambda,\check{\alpha}}+1} l_\alpha^+\bk{\lambda} l_\alpha^-\bk{\lambda}}
\E_{B_H}\bk{\lambda,f^0_\lambda,g} ,
\end{equation}
where
\[
l_\alpha^{\pm}\bk{\lambda} = \gen{\lambda,\check{\alpha}}\pm 1 .
\]

\begin{Prop}
	The normalized Eisenstein series $\E_B^\sharp\bk{\lambda,g}$ is entire and $W_H$-invariant.
\end{Prop}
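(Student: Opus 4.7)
The plan is to prove the two assertions—$W_H$-invariance and entireness—separately, using the functional equation $\zfun_K\bk{s} = \zfun_K\bk{1-s}$ of the completed Dedekind zeta functions together with the Langlands functional equation for spherical Eisenstein series as the two main inputs.

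For the $W_H$-invariance, by the cocycle relation for intertwining operators it suffices to verify the identity $\E_{B_H}^\sharp\bk{w\lambda,g} = \E_{B_H}^\sharp\bk{\lambda,g}$ for a single simple reflection $w = w_\beta$. The product $\prod_{\alpha\in\Phi^+} l_\alpha^+\bk{\lambda} l_\alpha^-\bk{\lambda} = \prod_{\alpha}\bk{\gen{\lambda,\check\alpha}^2 - 1}$ is manifestly $W_H$-invariant, since squaring makes the sign of $\check\alpha$ irrelevant while $w_\beta$ permutes $\Phi^+\setminus\set{\beta}$ and maps $\beta$ to $-\beta$. For $\prod_\alpha \zfun_{F_\alpha}\bk{\gen{\lambda,\check\alpha}+1}$, the substitution $\lambda\mapsto w_\beta\lambda$ affects only the $\alpha=\beta$ term, and by the functional equation of $\zfun_{F_\beta}$ the resulting ratio is exactly the Gindikin--Karpelevich factor $\zfun_{F_\beta}\bk{\gen{\lambda,\check\beta}}/\zfun_{F_\beta}\bk{\gen{\lambda,\check\beta}+1}$ attached to $w_\beta$. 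Combined with the corresponding functional equation of the spherical Eisenstein series—which asserts that $\E_{B_H}\bk{w\lambda,f^0_{w\lambda},g}$ differs from $\E_{B_H}\bk{\lambda,f^0_\lambda,g}$ exactly by the reciprocal of this Gindikin--Karpelevich factor—one obtains $\E_{B_H}^\sharp\bk{w_\beta\lambda,g} = \E_{B_H}^\sharp\bk{\lambda,g}$, and induction on the length of $w$ extends the identity to the whole of $W_H$.

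For the entireness, I would use the criterion that a spherical Eisenstein series is holomorphic at $\lambda$ if and only if its constant term along $N_H$ is. This constant term reads $\suml_{w\in W_H} J\bk{w,\lambda}\, f^0_{w\lambda}\bk{g}$, and after multiplication by $N\bk{\lambda} := \prod_\alpha \zfun_{F_\alpha}\bk{\gen{\lambda,\check\alpha}+1} l_\alpha^+\bk{\lambda} l_\alpha^-\bk{\lambda}$ each summand becomes a product of completed zetas and linear polynomials. The $l_\alpha^-\bk{\lambda}$-factor in $N$ cancels the simple poles of $\zfun_{F_\alpha}$ at argument $1$ coming from the numerator of $J\bk{w,\lambda}$ whenever $w^{-1}\alpha<0$, and the $l_\alpha^+\bk{\lambda}$-factor cancels the simple poles of $\zfun_{F_\alpha}\bk{\cdot+1}$ at argument $-1$ coming from $N$ itself.

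The main obstacle is controlling the remaining potential singularities along the walls $\gen{\lambda,\check\alpha} = 0$, where both the numerator of $J\bk{w,\lambda}$ and the $\zfun_{F_\alpha}\bk{\cdot+1}$-factor in $N$ contribute simple poles that do not cancel summand-by-summand. I would treat these by residue cancellation across the Weyl group: \Cref{Cor: Keys-Shahidi} shows that on such a wall the local intertwining operator for the corresponding simple reflection acts as $-1$ on the relevant principal series, so the contributions from $w$ and from the paired element $w_\alpha w$ enter with opposite leading residues and sum to zero. Together with the $W_H$-invariance established in the first step, which propagates holomorphy from the region of absolute convergence $\lambda \in \rho + C^+$ (where $N\bk{\lambda}$ is clearly holomorphic) to a Zariski-dense open subset of $\mathfrak{a}_\C^\ast$, this pairwise cancellation on the residual hyperplanes completes the argument that $\E_{B_H}^\sharp\bk{\lambda,g}$ is entire.
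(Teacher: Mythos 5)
Your argument for the $W_H$-invariance is correct and matches the paper's: the product of linear factors is manifestly Weyl-invariant, the zeta product transforms by exactly the Gindikin--Karpelevich ratio of the simple reflection, and this ratio is killed by the Langlands functional equation for the spherical Eisenstein series. Your use of \Cref{Cor: Keys-Shahidi} to produce the $-1$ in the residue pairing is a clean shortcut; the paper's appendix instead verifies the cancellation by a direct comparison of $F_{w_\alpha w}(\lambda)$ with $F_w(w_\alpha^{-1}\lambda)$ and their residues along $H_\alpha^0$, but this is the same mechanism.

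The entireness argument, however, has a genuine gap at the end. The pairwise cancellations you describe (and that the paper carries out) establish holomorphy of the constant term only away from the codimension-two locus
\[
X = \bigcup_{\substack{\alpha\neq\alpha' \\ \epsilon,\epsilon'\in\{-1,0,1\}}} \bk{H_\alpha^\epsilon \cap H_{\alpha'}^{\epsilon'}},
\]
because the residue pairing between $w$ and $w_\alpha w$ exploits the fact that the pole along $H_\alpha^0$ is simple, which can fail where several walls meet. Your proposal to close this remaining hole by $W_H$-invariance does not work: $W_H$-invariance propagates holomorphy only from $\rho+C^+$ to $\bigcup_{w\in W_H} w\cdot(\rho+C^+)$, whose complement is an entire tube around the real walls, not merely a codimension-two set; Zariski-density of the good region gives nothing about extending a holomorphic function across a closed analytic set. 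What is actually needed, and what the paper invokes, is Hartogs' extension theorem: once $\E_B^\sharp(\lambda,t)_T$ is holomorphic on $\mathfrak{a}_\C^\ast\setminus X$ with $X$ of complex codimension $\geq 2$, it extends holomorphically to all of $\mathfrak{a}_\C^\ast$. Without this step (or an equivalent removable-singularity argument), your proof of entireness is incomplete.
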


For the sake of completeness of presentation, we prove this result in the \hyperref[Appendix-EntirenessOfNormalizedSeries]{Appendix}.

\begin{proof}
	We prove this in the case where $K$ is a quadratic extension of $F$.
	The split case follows similarly.
	We consider the normalized Eisenstein series $\E_{B_H}^\sharp\bk{\lambda,g}$ given in \Cref{Eq:NormalizedEisensteinSeries}.

	Using the $W_H$-invariance of $\E_{B_H}^\sharp\bk{\lambda,g}$ we have
	\[
	\E_{B_H}^\sharp\bk{\lambda,g} = \E_{B_H}^\sharp\bk{w_{1}\cdot\lambda,g} .
	\]
	
	We consider two lines in $\mathfrak{a}_\C^\ast$ given by
	\[
	\modf{B}^{1/2} \modf{P}^{s-1/2} = \mu^P_s = \bk{1,5s-3/2,1},\quad 
	\modf{B}^{1/2} \modf{Q}^{s-1/2} = \mu^Q_s = \bk{6s-2,1,1} .
	\]
	It holds that
	\begin{align*}
	& \Ind_B^H\mu^P_s \twoheadrightarrow I_P\bk{s} \\
	& \Ind_B^H\mu^Q_s \twoheadrightarrow I_Q\bk{s} .
	\end{align*}
	Furthermore, it follows that
	\begin{align*}
	&\lim\limits_{s_1,s_3\to 1} \bk{s_1-1} \bk{s_3-1} \E_{B_H}\bk{\lambda_{\overline{s}},f^0,g} = \E_{P}\bk{s_2,f^0,g} \\
	&\lim\limits_{s_2,s_3\to 1}\bk{s_2-1} \bk{s_3-1} \E_{B_H}\bk{\lambda_{\overline{s}},f^0,g} =
	\E_{Q}\bk{s_1,f^0,g}
	\end{align*}
	
	For convenience, let $\kappa_s = w_{1}\cdot\mu_s^Q = \bk{2-6s,6s-1,1}$.
	By $W_H$-invariance, for any $s\in\C$ it holds that
	\[
	\E_{B_H}^\sharp\bk{\kappa_s,g} = \E_{B_H}^\sharp\bk{\mu_s^Q,g} .
	\]
		
	Since $\E_{B_H}^\sharp\bk{\lambda,g}$ is entire, it is continuous in a neighborhood of $\mu^P_{3/10}=\bk{1,0,1}=\kappa_{1/6}$ and $\mu^Q_{1/6}=\bk{-1,1,1}=w_1\cdot\bk{1,0,1}$.
	In particular, since $\mu_{3/10}^P = w_1\cdot \mu_{1/6}^Q$, it follows that
	\[
	\lim\limits_{s\to 3/10} \E_{B_H}^\sharp\bk{\mu^P_s,g} = 
	\E_{B_H}^\sharp\bk{\bk{1,0,1},g} = 
	\lim\limits_{s\to 1/6} \E_{B_H}^\sharp\bk{\kappa_s,g} = 
	\lim\limits_{s\to 1/6} \E_{B_H}^\sharp\bk{\mu^Q_s,g},
	\]

\begin{figure*}[h]
	\begin{tikzpicture}
	\draw[thick,->] (-4,0) -- (4,0) node[anchor=south] {$s_1$};
	\draw[thick,->] (0,-4) -- (0,4) node[anchor=west] {$s_2$};
	
	\foreach \x in {-3,-2,-1,1,2,3,4}
	\draw (\x cm,1pt) -- (\x cm,-1pt) node[anchor=north] {$\x$};
	
	\foreach \y in {-3,-2,-1,1,2,3,4}
	\draw (1pt,\y cm) -- (-1pt,\y cm) node[anchor=east] {$\y$};
	
	\draw[black, thick] (-4,1) -- (4,1) ;
	\draw (-4,1) coordinate [label=above:$\mu^Q$];
	
	\draw[black, thick] (1,-4) -- (1,4) ;
	\draw (1,4) coordinate [label=right:$\mu^P$];
	
	\draw[black, thick] (-3,4) -- (4,-3) ;
	\draw (-3,4) coordinate [label=right:$\kappa$];
	
	\draw (1,0) coordinate [label=above right:$\mu^P_{3/10}$,color=red] (s0);
	\fill [red] (s0) circle (2pt);
	
	\draw (-1,1) coordinate [label=above left:$\mu^Q_{1/6}$,color=red] (s1);
	\fill [red] (s1) circle (2pt);
	
	\draw[black, dashed] (-2,-4) -- (2,4) ;
	\draw (2,4) coordinate [label=right:$H$];	
	\end{tikzpicture} \\
	{\bf F{\scriptsize IGURE}}: The lines $\mu_s^P$, $\mu_s^Q$ and $\kappa_s$ in the plane given by $s_3=1$. The dashed line indicates the hyperplane $H$ fixed by $w_1$.
\end{figure*}
	
	One checks that
	\begin{align*}
	& \lim\limits_{s\to 3/10} \E_{B_H}^\sharp\bk{\mu^P_s,g}
	= - 2^{10} 3^2 R \zfun_F\bk{2}^2 \zfun_K\bk{2}^2 \zfun_F\bk{3} \zfun_K\bk{3} \zfun_F\bk{4}^2 \coset{\bk{s-3/10}^2\E_{P}\bk{s,f^0_{s},g}}\res{s=3/10} \\
	& \lim\limits_{s\to 1/6} \E_{B_H}^\sharp\bk{\mu^Q_s,g}  =
	- 2^{10} 3^2 R^2 \zfun_F\bk{2} \zfun_K\bk{2}^2 \zfun_F\bk{3} \zfun_K\bk{3} \zfun_F\bk{4}^2
	\E_{Q}\bk{1/6,\widetilde{f}^0_{1/6},g}
	\end{align*}
	and hence
	\[
	\coset{\bk{s-3/10}^2\E_{P}\bk{s,f^0_{s},g}}\res{s=3/10} =
	\frac{R}{\zfun_F\bk{2}} \E_{Q}\bk{1/6,\widetilde{f}^0_{1/6},g} .
	\]
	
\end{proof}

The intertwining operator $M\bk{w[2342],\lambda}\res{\mu^P_s}$ admits a simple pole at $s=3/10$, namely at $\bk{1,0,1}$.
We consider the induced intertwining operator
\[
A_{w\coset{2342}} = \Res_{s=3/10} \bk{M\bk{w[2342],\lambda} \res{\mu^P_s}}: I_P\bk{3/10} \to I_Q^0\bk{1/6}.
\]

Since
\[
A_{w\coset{2342}}f_{3/10}^0 =
\begin{cases}
\frac{R\zfun_F\bk{3}\zfun_K\bk{2}}{5\zfun_F\bk{2}\zfun_F\bk{4}\zfun_K\bk{3}}
\widetilde{f}_{1/6}, & K \text{ field} \\
\frac{R\zfun_F\bk{2}}{5\zfun_F\bk{3}\zfun_F\bk{4}}
\widetilde{f}_{1/6},& K=F\times F
\end{cases} ,
\]
it follows that

\begin{Cor}
\label{Cor: Siegel-Weil Identity}
	Given $f\in I_P\bk{3/10}$ we denote by $f_s\in I_P\bk{s}$ the standard section such that $f_{3/10}=f$.
	\begin{enumerate}
		\item
		If $K$ is a field then
		\[
		\coset{\bk{s-3/10}\E_{P}\bk{s,f_{s},g}}\res{s=3/10} =
		\frac{5\zfun_F\bk{4}\zfun_K\bk{3}}{\zfun_F\bk{3}\zfun_K\bk{2}} \E_{Q}\bk{1/6,A_{w\coset{2342}}f,g}.
		\]
		
		\item
		If $K=F\times F$ then
		\[
		\coset{\bk{s-3/10}^2\E_{P}\bk{s,f_s,g}}\res{s=3/10}
		=
		\frac{5\zfun_F\bk{3}\zfun_F\bk{4}}{\zfun_F\bk{2}^2} 
		\Res_{s=1/6}\E_{Q}\bk{s,A_{w\coset{2342}}f,g}.
		\]
	\end{enumerate}
\end{Cor}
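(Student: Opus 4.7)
The plan is to derive the identity for arbitrary sections $f \in I_P(3/10)$ from the spherical case, \Cref{SW}, combined with a direct computation of the residue operator $A_{w\coset{2342}}$ on the spherical vector.

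First, I would compute $A_{w\coset{2342}} f^0_{3/10}$ explicitly. By construction, $A_{w\coset{2342}}$ is the residue at $s=3/10$ of the global intertwining operator $M\bk{w[2342],\chi_s^P}$ restricted to $I_P(s)\subset I_{B_H}(\chi_s^P)$, landing in $I_Q^0(1/6)\subset I_{B_H}(\chi_{1/6}^Q)$. On the normalized spherical vector, the Gindikin-Karpelevich formula gives $M\bk{w[2342],\chi_s^P}f^0_s = J\bk{w[2342],s} \widetilde{f}^0_{s'(s)}$, where the product of local factors has a simple pole at $s=3/10$. Evaluating the residue of this explicit product of zeta functions yields $A_{w\coset{2342}}f^0_{3/10} = c_K \, \widetilde{f}^0_{1/6}$ with $c_K = \frac{R\zfun_F\bk{3}\zfun_K\bk{2}}{5\zfun_F\bk{2}\zfun_F\bk{4}\zfun_K\bk{3}}$ when $K$ is a field and $c_K = \frac{R\zfun_F\bk{2}}{5\zfun_F\bk{3}\zfun_F\bk{4}}$ when $K=F\times F$. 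Inserting this expression into \Cref{SW} establishes the Corollary on the spherical section.

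Second, to extend from the spherical section to a general $f\in I_P(3/10)$, I would invoke the standard functional equation of Eisenstein series: for any standard section $f_s$, viewed inside $\Ind_{B_H}^H \mu_s^P$ via induction in stages, one has
\[
\E_{B_H}\bk{\mu_s^P, f_s, g} = \E_{B_H}\bk{w[2342]^{-1}\cdot\mu_s^P, M\bk{w[2342],\mu_s^P}f_s, g}.
\]
By \Cref{Lem:SuitablePair} and the $W_H$-invariance already exploited in the proof of \Cref{SW}, the parameter map satisfies $s=3/10 \leftrightarrow s'=1/6$ upon descent to the appropriate Levis, so this reduces to an identity
\[
\E_{P}\bk{s, f_s, g} = \E_{Q}\bk{s'(s), M\bk{w[2342],\chi_s^P}f_s, g}.
\]
Taking Laurent leading coefficients at $s=3/10$, using that $M\bk{w[2342],\chi_s^P}$ contributes a simple pole with residue $A_{w\coset{2342}}$, and that $\E_Q$ is holomorphic at $s'=1/6$ in the field case but has a simple pole in the split case by \Cref{Prop:PolesofE_Q}, produces the stated identity with the correct pole order on each side.

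The main obstacle will be the Laurent-expansion bookkeeping in the split case, where a double pole on the left must be matched against a single residue of $\E_Q$ applied to a single residue of $M$ on the right, with no cross-contribution from higher-order terms. This is resolved by observing that the image of $A_{w\coset{2342}}$ lies in $I_Q^0(1/6)$, the subrepresentation generated by the spherical vector, on which \Cref{Prop:PolesofE_Q}(2) guarantees the residue of $\E_Q$ exists and is realized precisely by the leading-order contribution. Constants are then fixed by comparison with the spherical computation of step one.
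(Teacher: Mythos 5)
Your first step is correct and is exactly what the paper does: evaluate $A_{w\coset{2342}}$ on the normalized spherical section via the Gindikin--Karpelevich formula, take the residue, and substitute into \Cref{SW} to get the identity on the spherical vector. The problem lies entirely in your second step.

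The ``functional equation''
\[
\E_{B_H}\bk{\mu_s^P, f_s, g} = \E_{B_H}\bk{w[2342]^{-1}\cdot\mu_s^P, M\bk{w[2342],\mu_s^P}f_s, g}
\]
does not do what you need here. If $f_s\in I_P\bk{s}$ is regarded inside $I_{B_H}\bk{\mu_s^P}$ via induction in stages, then $f_s$ is constant along $B_H(F)\lmod P(F)$, so the series $\E_{B_H}\bk{\mu_s^P,f_s,g}$ has infinitely many identical summands and does not converge; in particular it is not equal to $\E_P\bk{s,f_s,g}$. In the proof of \Cref{SW} the passage between $\E_{B_H}$ and the degenerate series is a residue along the hyperplanes $s_1=1,\,s_3=1$, and this residue identity is established for the \emph{spherical} section only. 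Likewise, the $W_H$-invariance you cite is that of $\E_{B_H}^\sharp\bk{\lambda,g}$, which by definition involves only $f^0_\lambda$. Moreover, $P$ and $Q$ are not associate parabolics --- their Levi factors $\bk{GL_2\times\Res_{K/F}GL_2}^0$ and $GL_1\ltimes\Spin_6^K$ are not conjugate --- so there is no direct functional equation between $\E_P$ and $\E_Q$ for general sections. The deduced identity $\E_P\bk{s,f_s,g} = \E_Q\bk{s'(s), M\bk{w[2342],\chi_s^P}f_s,g}$ is therefore unsupported, and the Laurent-bookkeeping paragraph is trying to repair a chain whose first link is broken.

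The correct extension from the spherical case to a general $f$ is much simpler and is what the paper does: the two sides of the Corollary, namely $f\mapsto\coset{\bk{s-3/10}^{m}\E_P\bk{s,f_s,g}}\res{s=3/10}$ and $f\mapsto c_K\cdot\E_Q\bk{1/6,A_{w\coset{2342}}f,g}$ (or its residue in the split case), are both $H\bk{\A}$-equivariant maps from $I_P\bk{3/10}$ to automorphic forms on $H$. By \Cref{I:QP:structure} the spherical vector $f^0_{3/10}$ generates $I_P\bk{3/10}$, so the identity on $f^0_{3/10}$ established in your first step propagates to all of $I_P\bk{3/10}$. Replace your second step with this equivariance argument.
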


\begin{proof}
We consider the case where $K$ is a field for example.
Indeed, it follows from the above that
\[
\coset{\bk{s-3/10}\E_{P}\bk{s,f_{s}^0,g}}\res{s=3/10} =
\frac{5\zfun_F\bk{4}\zfun_K\bk{3}}{\zfun_F\bk{3}\zfun_K\bk{2}} \E_{Q}\bk{1/6,A_{w\coset{2342}}f_s^0,g}.
\]
Since $f_{3/10}^0$ generates $I_P\bk{3/10}$ and $\E_{P}$, $\E_{Q}$ and $A_{w\coset{2342}}$ are $H$-equivariant the claim follows.
\end{proof}

\subsection{Relation Between Eisenstein Series on $Spin_8$ and $SO_8$.}
Let $(V^8_K,q_K)$ be a quadratic space of dimension $8$ 
and discriminant $K$. The group of its automorphisms 
is denoted by $\overline{H}$. It is a quasi-split group over $F$, split over $K$ of type $D_4$. 
There is an isogeny of algebraic groups 
\[
1\rightarrow \mu_2\rightarrow H\rightarrow \overline{H}\rightarrow 1 .
\]

The parabolic subgroup of $\overline{H}$ corresponding to $Q$ is denoted by $\overline{Q}$.

The map $p:H(F)\rightarrow \overline{H}(F)$ induces the isomorphism of vector spaces
\[
p^\ast: I_{\overline{Q}}\bk{s}\rightarrow I_{Q}\bk{s}.
\]

\begin{Lem}
\label{Lem:StructureofRepnofHbar}
Fix a place $\nu$ of $F$.
\begin{enumerate}
\item
If $I_{Q_\nu}\bk{s}$ is an irreducible $H_\nu$-representation then $I_{\overline{Q}_\nu}\bk{s}$ is an irreducible $\overline{H}_\nu$-representation.

\item
If $I_{Q_\nu}\bk{s}$ is generated by a vector $v$ as an $H_\nu$-representation then $I_{\overline{Q}_\nu}\bk{s}$ is generated by $\overline{v}=\bk{p^\ast}^{-1}\bk{v}$ as an $\overline{H}_\nu$-representation.

\item
Let $\nu$ be a finite or real place of $F$.
The representation $I_{\overline{Q}_\nu}\bk{1/6}$ is irreducible when $K_\nu$ is a field.
If $K_\nu=F_\nu\times F_\nu$ then $I_{\overline{Q}_\nu}(1/6)$ has length two.
The unique irreducible quotient is isomorphic to the minimal representation.
\end{enumerate}
\end{Lem}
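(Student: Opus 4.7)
The plan is to deduce parts (1) and (2) as formal consequences of the $H_\nu$-equivariance of the vector space isomorphism $p^\ast:I_{\overline{Q}_\nu}(s)\to I_{Q_\nu}(s)$, where $H_\nu$ acts on the source through $p$, and then to reduce part (3) to \Cref{I:QP:structure} via these formal consequences. The $H_\nu$-action on $I_{\overline{Q}_\nu}(s)$ factors through the subgroup $p(H_\nu)\subseteq \overline{H}_\nu$, so every $\overline{H}_\nu$-invariant subspace of $I_{\overline{Q}_\nu}(s)$ is in particular $p(H_\nu)$-invariant and thus corresponds under $p^\ast$ to an $H_\nu$-invariant subspace of $I_{Q_\nu}(s)$. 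This one-directional containment gives (1): if $I_{Q_\nu}(s)$ has no proper non-trivial $H_\nu$-submodule, then $I_{\overline{Q}_\nu}(s)$ has no proper non-trivial $\overline{H}_\nu$-submodule. The same containment gives (2): the $\overline{H}_\nu$-span of $\overline{v}$ contains the $p(H_\nu)$-span of $\overline{v}$, which under $p^\ast$ equals the $H_\nu$-span of $v$; if the latter is everything, so is the former.

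For part (3), the case where $K_\nu$ is a field follows at once from (1) together with \Cref{I:QP:structure}(2) (whose archimedean version is treated at the end of this section), since $I_{Q_\nu}(1/6)$ is then irreducible. In the split case $K_\nu=F_\nu\times F_\nu$, \Cref{I:QP:structure} supplies a length-two composition series for $I_{Q_\nu}(1/6)$ on the $H_\nu$-side, with subrepresentation $I_{Q_\nu}^0(1/6)$ generated by the spherical section and the minimal representation as unique irreducible quotient. The formal transfer above only yields length at most two on the $\overline{H}_\nu$-side. The plan for promoting this to an equality is to exhibit a proper non-trivial $\overline{H}_\nu$-subrepresentation, namely $(p^\ast)^{-1}(I_{Q_\nu}^0(1/6))$. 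This subspace is the image of the standard intertwining operator $M(w[2342]):I_{\overline{P}_\nu}(3/10)\to I_{\overline{Q}_\nu}(1/6)$, which is defined by the same unipotent-integral formula on either double cover and is therefore automatically $\overline{H}_\nu$-equivariant; that its image pulls back to $I_{Q_\nu}^0(1/6)$ under $p^\ast$ follows from the corresponding statement in \Cref{I:QP:structure}(3). Length two being established, the irreducible quotient is identified with the minimal representation of $\overline{H}_\nu$ as the $\overline{H}_\nu$-analog of the spherical minimal quotient of $I_{Q_\nu}(1/6)$.

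The main obstacle is not in the formal transfer but in having the archimedean analog of \Cref{I:QP:structure}(2) available at real places, required to supply the $H_\nu$-side input for part (3) when $\nu$ is archimedean; this is dealt with separately at the end of the section.
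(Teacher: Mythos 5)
Your parts (1) and (2) are essentially the paper's argument: pull back a nonzero $\overline{H}_\nu$-invariant subspace (or the cyclic span of $\overline{v}$) under $p^\ast$, observe that it is $p(H_\nu)$-stable hence $H_\nu$-stable, and invoke the hypothesis on $I_{Q_\nu}(s)$.

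Where you genuinely add something is part (3) in the split case. The paper simply says that the non-Archimedean case ``follows from \Cref{I:QP:structure},'' but as you correctly point out, (1) and (2) by themselves only give an \emph{upper} bound on the length of $I_{\overline{Q}_\nu}(1/6)$: a $\overline{H}_\nu$-submodule is automatically a $p(H_\nu)$-submodule, but the converse is not clear, so the chain $\{0\}\subsetneq I_{Q_\nu}^0(1/6)\subsetneq I_{Q_\nu}(1/6)$ could a priori collapse on the $\overline{H}_\nu$-side. Your fix --- realize $(p^\ast)^{-1}\bigl(I_{Q_\nu}^0(1/6)\bigr)$ as the image of the standard intertwining operator $M(w[2342])$ on the $\overline{H}_\nu$-side, which is $\overline{H}_\nu$-equivariant by construction and agrees with the $H_\nu$-side operator under $p^\ast$ because both are integrals over the same unipotent radical --- is a clean way to exhibit a proper nonzero $\overline{H}_\nu$-subrepresentation and settle length $=2$. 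This is the step the paper leaves tacit; making it explicit is warranted.

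Two smaller discrepancies with the paper. First, for real $\nu$ the paper does not transfer from $H_\nu$: it cites \cite[Theorem 3.3.3]{MR1864829} directly for $\overline{H}_\nu$. Your plan of routing through an Archimedean version of \Cref{I:QP:structure}(2) on the $H_\nu$-side would require that input, which the paper never establishes (the Remark after \Cref{I:QP:structure} defers the Archimedean discussion, and what is actually supplied later concerns $\overline{H}_\nu$, not $H_\nu$); for a self-contained argument you should cite the $\overline{H}_\nu$-result directly as the paper does. Second, your identification of the irreducible quotient with the minimal representation is stated but not argued; in the paper this comes from the commuting triangle $I_{P_\nu}(3/10)\twoheadrightarrow I_{Q_\nu}^0(1/6)\twoheadrightarrow \Pi_{min,\nu}$ already established for $H_\nu$, and one should note that $\Pi_{min,\nu}$ factors through $\overline{H}_\nu$ so that the same triangle makes sense after applying $p^\ast$.
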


\begin{proof}
\begin{enumerate}
\item
Assume $\pi$ is a $\overline{H}_\nu$-subrepresentation of $I_{\overline{Q}_\nu}\bk{s}$, then $\pi$ is generated by any $0\neq \overline{v}\in \pi$.
Let $v=p^\ast\bk{\overline{v}}$, by irreducibility $I_{Q_\nu}\bk{s}$ is generated by $v$, namely
\[
I_{Q_\nu}\bk{s} = Span_\C \set{\pi\bk{h} v \mvert h\in H_\nu} .
\]
On the other hand, for any $h\in H_\nu$ we have $\pi\bk{h}v=\pi\bk{p^\ast\bk{h}}\overline{v}$ and hence
\[
I_{Q_\nu}\bk{s} = Span_\C \set{\pi\bk{h} \overline{v} \mvert h\in H_\nu} = 
Span_\C \set{\pi\bk{p^\ast\bk{h}} \overline{v} \mvert h\in H_\nu} \subset \pi .
\]
It follows that $\pi=I_{\overline{Q}_\nu}\bk{s}$.

\item
This is proven similarly.

\item
For real places the claim is stated in \cite[Theorem 3.3.3]{MR1864829}.
For non-Archimedean places this follows from \cref{I:QP:structure}.
\end{enumerate}
\end{proof}

\begin{Remark}
The structure of $I_{\overline{Q}_\nu}\bk{1/6}$ in the complex case is not known to us.
However, as explained at the end of the proof of \Cref{Prop:PolesofE_Q}, the subrepresentation $I_{\overline{Q}_\nu}^0\bk{1/6}$ admits a unique irreducible quotient isomorphic to the minimal representation.
\end{Remark}

Since the canonical inclusion $G_2\hookrightarrow H$ factors through $\overline{H}$, we get for any $g\in G_2$ 
\[
\E_{\overline{Q}}\bk{f,g,s}= \E_{Q}\bk{p^\ast\bk{f},g,s} .
\]

\section{Global Theta Lift}
The goal of this section is to show that (the leading term of) the Eisenstein series $\E_{\overline{Q}}\bk{\cdot,s,h}$ on $\overline{H}_{F\times K}$ is the regularized global theta lift of (the leading term of) the Eisenstein series $\E_B\bk{\cdot,g,s}$ of $SL_2$ at $s=1/2$ associated to the representation $I_B(\chi_K,1/2)$.

Let us recall the set up of the theta lift. 
The group  $\overline{H}$ is the group of isometries of the quadratic space $V^8_K$ of dimension $8$ and discriminant $K$.
We write $V^8_K=V^6_K +\bH$, where $\bH=Span\{e_0,e_0^\ast\}$ is a hyperbolic plane.
Then $\overline{Q}$ is the parabolic subgroup stabilizing $Span\{e_0\}$ and its Levi subgroup is isomorphic to $T_1\times SO\bk{V^6_K}$, where $T_1\simeq \G_m$.
Note that $T_1\cdot Stab_{\overline{H}\bk{\A}}\bk{e_0}=\overline{Q}\bk{\A}$.

The pair $\bk{SL_2,\overline{H}}$ is a dual pair inside $Sp_{16}$.
There is a splitting $SL_2\bk{\A}\times \overline{H}\bk{\A}\rightarrow \widetilde{Sp_{16}}\bk{\A}$ that depends on the form $q_K$.
We denote the pull-back of the Weil representation $\omega_\psi$ to $SL_2\bk{\A}\times \overline{H}\bk{\A}$ by $\omega_{\psi,q_K}$.

The representation $\omega_{\psi,q_K}$ acts via the Schr\"odinger model on the space of Schwartz functions $\Sch\bk{V^8_K\bk{\A}}$.
It can be realized automorphically via
\begin{align*}
& \theta_{\psi,q_K}:\Sch\bk{V^8_K\bk{\A}}\rightarrow \mathcal{A}\bk{SL_2\bk{\A}\times \overline{H}\bk{\A}} \\
& \theta_{\psi,q_K}\bk{\phi}\bk{g,h} = \sum_{v\in V^8_K\bk{F}} \omega_{\psi,q_K}\bk{g,h}\phi\bk{v} .
\end{align*}

For an automorphic form $\varphi\in \mathcal{A}\bk{SL_2}$ and a Schwartz function $\phi\in \Sch\bk{V^8_K\bk{\A}}$, the global theta lift $\theta_{\psi,q_K}\bk{\phi,\varphi}$ is defined by 
\begin{equation}
\label{theta:lift:def}
\theta_{\psi,q_K}\bk{\phi,\varphi} =
\integral{SL_2}\theta_{\psi,q_K}\bk{\phi}\bk{g,h} \varphi\bk{g}\, dg,
\end{equation}
whenever it converges.

In the following discussion we shall omit subscripts and write $\omega$ and $\theta$ instead of $\omega_{\psi,q_K}$ and $\theta_{\psi,q_K}$ when there is no confusion.

\subsection{Regularization of the Lift}

Let $I_B\bk{\chi_K,s}$ be a normalized induction of $SL_2\bk{\A}$.
The associated Eisenstein series $\E_B\bk{\cdot,g,s}$ is holomorphic when $K$ is a field and has 
at most a simple pole when $K=F\times F$.
In the latter case the pole is attained by the spherical section $f^0$ and the residual representation is the trivial representation.

Let $K$ be a field.
For $\varphi=\Eisen\bk{f,g,s}$, with $f\in I_B\bk{\chi_K,s}$, the integral in \Cref{theta:lift:def} does not converge and hence the regularization is required.
Let us recall the details of regularization. 

As a first step, we shall define a $SL_2\bk{\A}\times \overline{H}\bk{\A}$-submodule $\omega^0$ of $\omega$ such that for $\phi\in\omega^0$ the function $\theta\bk{\phi}\bk{g,h}$ is rapidly decreasing as a function of $g\in SL_2\bk{F}\backslash SL_2\bk{\A}$.

The function $\theta\bk{\phi}\bk{g}$ is rapidly decreasing whenever $\omega\bk{g}\phi\bk{0}=0$ for all $g\in SL_2\bk{\A}$.
Fixing an Archimedean place $\nu_0$, define a map
\[
\overline{T}:\omega_{\nu_0}\rightarrow I_{B_{\nu_0}}\bk{\chi_K,3/2}, \quad \overline{T}\bk{\phi}\bk{g}=\omega\bk{g}\phi\bk{0}
\]
and put $\omega_{\nu_0}^0=Ker\bk{\overline{T}}$.
This allows us to define 
\[
\omega^0=\omega^0_{\nu_0}\otimes \bk{\otimes_{\nu\neq \nu_0} \omega_\nu} .
\]
That is obviously an $SL_2\bk{\A}\times \overline{H}\bk{\A}$-module.
Hence, the map
\[
\theta: \omega^0\otimes I_B\bk{\chi_K,s} \rightarrow \mathcal{A}\bk{\overline{H}},
\]
given by 
\[
\theta\bk{\phi,f}\bk{h}=\integral{SL_2}\theta\bk{\phi}\bk{g,h}\E_B\bk{f,g,s}\, dg ,
\]
is well defined.

Recall that the center $\mathcal{Z}_{\nu_0}(sl_2)$ of the universal enveloping algebra $\mathcal{U}_{\nu_0}\bk{\mathfrak{sl}_2}$ is isomorphic to $\C\coset{\Delta}$ where $\Delta$ is the Casimir operator.
The element $\Delta$ acts on $I_{B_{\nu_0}}\bk{\chi_K,s}$ by the constant $s^2-1/4$.
Consider the element $z=\Delta-2\in \mathcal{Z}\bk{\mathfrak{sl}_2}$.
Then $z$ annihilates the representation $I_B\bk{\chi_K,\pm 3/2}$ and acts by a non-zero
constant $P_z\bk{s}$ on any $I_B\bk{\chi_{K_{\nu_0}},s}$ with $s\neq \pm 3/2$.
 
Clearly, $z$ defines an $SL_2\bk{\A}\times \overline{H}\bk{\A}$-equivariant map from $\omega$ to itself.
Moreover, the image is contained in $\omega^0$ since $z$ commutes with $\overline{T}$ and annihilates $I_{B}\bk{\chi_{K_{\nu_0}},3/2}$.

This allows us to extend the  map $\theta$ from $\omega^0\otimes I_B\bk{\chi_K,s}$ to $\omega\otimes I_B(\chi_K,s)$ by 
\[
\theta^{reg}\bk{\phi,f}=\frac{1}{P_z\bk{s}}\integral{SL_2}\theta\bk{z\phi}\bk{g,h}\E_B\bk{f,g,s}\, dg .
\]

The extension is unique for all $s\neq \pm 3/2$.
Otherwise, having two possible extensions $\theta_1$ and $\theta_2$ of $\theta$, we notice that $\theta_1-\theta_2$ vanishes on $\omega^0\otimes I_B\bk{\chi_K,s}$ and hence defines an $SL_2\bk{\A}$-invariant functional on $I_{B}\bk{\chi_{K_{\nu_0}},3/2}\otimes I_{B}\bk{\chi_{K_{\nu_0}},s}$ which must be zero.

Let $K=F\times F$.
The leading term of $\E_B\bk{\chi_K,s}$ is the trivial representation.
The regularization of its theta lift is defined in \cite{MR2262172}.

\subsection{The Regularized Theta Lift and Eisenstein Series}

The next proposition relates the regularized theta lift defined above to the degenerate Eisenstein series $\E_{\overline{Q}}\bk{\cdot,s,g}$ on $\overline{H}_{F\times K}$.

We recall \cite[Theorem 6.8]{MR1469105}:
\begin{Thm}
Let $K=F\times F$, the residual representation of $\Eisen_Q\bk{\cdot,\cdot,s}$ at $s=1/6$ is the minimal representation, namely
\begin{equation}
\set{Res_{s=1/6}\Eisen_Q\bk{f,g,s}\mvert f_s \text{ holomorphic section of } I_Q\bk{s}} = \Pi_{min} .
\end{equation}
\end{Thm}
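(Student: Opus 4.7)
The plan is to combine the analysis of the spherical residue from \Cref{Prop:PolesofE_Q} with the local length-two structure of $I_Q(1/6)$ established in \Cref{I:QP:structure} and \Cref{Lem:StructureofRepnofHbar}, together with a multiplicity-one argument via spherical subspaces.

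First I would observe that by \Cref{Prop:PolesofE_Q}(2), $\mathcal{E}_Q(\cdot, g, s)$ has a simple pole at $s = 1/6$ attained by the spherical section, and that the residual representation is square integrable. Thus the residue at $s = 1/6$ defines an $H(\mathbb{A})$-equivariant map $I_Q(1/6) \to \mathcal{A}_2(H)$ whose image is precisely the full residual representation. Since this image lies in the discrete $L^2$ spectrum, it is semisimple, so it suffices to identify its irreducible constituents and their multiplicities. Note also that by the Siegel-Weil type identity \Cref{Cor: Siegel-Weil Identity}(2), the residue of $\mathcal{E}_Q$ at $s = 1/6$ on the image of $A_{w[2342]}$ is a nonzero multiple of the leading term of $\mathcal{E}_P$ at $s = 3/10$, which by \Cref{poles:Eisen_P}(3) generates $\Pi_{min}$; so $\Pi_{min}$ is at least contained in the residual representation.

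Second, I would determine the abstract isomorphism type of the full image. By \Cref{I:QP:structure}(2), at every non-Archimedean place $I_{Q_\nu}(1/6)$ has length two with $\Pi_{min,\nu}$ as its unique irreducible quotient; at Archimedean places, the subrepresentation generated by the spherical vector has the same unique irreducible quotient $\Pi_{min,\nu}$ by \Cref{Lem:StructureofRepnofHbar}(3), and uniqueness of the Langlands quotient extends this to the full induced module at such places. Consequently every irreducible $H(\mathbb{A})$-quotient of $I_Q(1/6)$ is globally isomorphic to $\Pi_{min}$, and the residual representation, being a nonzero semisimple such quotient, must be isomorphic to $\Pi_{min}^{\oplus k}$ for some $k \geq 1$.

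Third, I would pin down $k = 1$ by comparing spherical subspaces. The spherical subspace of $I_Q(1/6)$ is one-dimensional, and by \Cref{Prop:PolesofE_Q}(2) the residue of the global spherical section is a nonzero spherical vector of the residual representation. A realization of $\Pi_{min}^{\oplus k}$ as an $H(\mathbb{A})$-submodule of $\mathcal{A}_2(H)$ has a $k$-dimensional spherical subspace, with distinct direct summands contributing linearly independent spherical vectors. Since the image spherical subspace is at most one-dimensional, this forces $k = 1$, giving the desired equality with $\Pi_{min}$.

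The main obstacle will be the clean treatment of Archimedean places, where the exact length of $I_{Q_\nu}(1/6)$ is not fully tabulated in the present excerpt; one handles this by leveraging that the whole argument only uses the uniqueness of the irreducible quotient, which persists at all places via the Langlands classification combined with the exponent computation and square-integrability already established in \Cref{Prop:PolesofE_Q}(2).
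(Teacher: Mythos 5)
The statement you are trying to prove is not proved in the paper at all; it is explicitly \emph{cited} from an external source (the sentence preceding it reads ``We recall \cite[Theorem 6.8]{MR1469105}:''). The paper's own result in this direction is \Cref{Prop:PolesofE_Q}(2), which is deliberately weaker: it establishes the residue only for sections lying in $I_Q^0(1/6)$, the submodule generated by the spherical vector. Your proposal is, in effect, an attempt to upgrade \Cref{Prop:PolesofE_Q}(2) to the full statement, and in the first and third steps it faithfully mirrors the paper's mechanism (constant-term exponent bookkeeping, square integrability via Langlands' criterion, identification of the unique irreducible local quotient, and a cyclicity/spherical-multiplicity argument).

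The gap is precisely at the complex Archimedean places, which is exactly why the paper restricts itself to $I_Q^0(1/6)$ and cites \cite{MR1469105} for the stronger claim. \Cref{Lem:StructureofRepnofHbar}(3) handles non-Archimedean and real places, but the Remark following it says in so many words that ``the structure of $I_{\overline{Q}_\nu}(1/6)$ in the complex case is not known to us,'' and only asserts a unique irreducible quotient for the submodule $I^0_{\overline{Q}_\nu}(1/6)$. Your appeal to ``uniqueness of the Langlands quotient extends this to the full induced module at such places'' does not go through as stated: at $s=1/6$ the inducing character $\chi^Q_{1/6}$ sits on a wall and is not dominant, so $I_{Q_\nu}(1/6)$ is not itself a standard module, and the unique-Langlands-quotient theorem applies to the standard module attached to the $W$-conjugate dominant parameter $\mu^P_{3/10}$ — i.e., to $I_{P_\nu}(3/10)$, not to $I_{Q_\nu}(1/6)$. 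Nothing in the paper rules out an extra irreducible quotient of $I_{Q_\nu}(1/6)$ at a complex place that would contribute to the residue over non-$I_Q^0$ sections. Your multiplicity-one argument via spherical subspaces is fine \emph{conditionally} on the local quotient being unique at all places, but it cannot patch the missing local input. To close the argument you would need either an explicit composition series for $I_{Q_\nu}(1/6)$ over $\C$, or a different device (e.g., observing that $K=F\times F$ over a totally imaginary $F$ is the only case with complex places, then using a global multiplicity-one theorem for $\Pi_{min}$ as in \cite{MR1469105}).
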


We now prove an analogue result for the case where $K$ is a field.

\begin{Prop}
Let $K$ be a field and $Re\bk{s}\gg 0$.
For any $\phi\in \omega^0, f \in I_B\bk{\chi_K,s}$ one has 
\begin{equation}
\theta\bk{\phi,f}\bk{h} = \E_Q\bk{F\bk{\phi,f,s},h,s/3} ,
\end{equation}
where 
\[
F\bk{\phi,f,s}\bk{h} = \int\limits_{N\bk{\A}\lmod SL_2\bk{\A}} \omega\bk{g,h}\phi\bk{e_0} f\bk{g,s}\,dg .
\]
\end{Prop}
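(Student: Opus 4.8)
The plan is to prove the identity by the standard unfolding of the theta integral, exactly as in the classical ``first term'' Siegel--Weil computations and as in the proof of the split case cited just above from \cite{MR1469105}. Since $\phi\in\omega^0$, the theta function $g\mapsto\theta(\phi)(g,h)$ is rapidly decreasing on $SL_2(F)\backslash SL_2(\A)$ (this is the criterion recalled in the text: $\omega(g)\phi(0)=0$ for all $g$), so for $Re\bk{s}\gg 0$ — where $\E_B\bk{f,\cdot,s}$ converges absolutely and has moderate growth — the integral $\theta(\phi,f)(h)=\integral{SL_2}\theta(\phi)(g,h)\,\E_B\bk{f,g,s}\,dg$ converges absolutely. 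Using the left $SL_2(F)$-invariance of $\theta(\phi)(\cdot,h)$ I would unfold the Eisenstein series against the theta kernel to obtain
\[
\theta(\phi,f)(h)=\int_{B(F)\backslash SL_2(\A)}\Bigl(\sum_{v\in V^8_K(F)}\omega(g,h)\phi(v)\Bigr)f(g,s)\,dg .
\]

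Next I would Fourier–expand the theta kernel along $N$. Writing $B=TN$ and filling $N$ up from $N(F)$ to $N(\A)$, and using that $f\in I_B\bk{\chi_K,s}$ is left $N(\A)$-invariant while $\omega(n(b))\Psi(v)=\psi\bigl(b\,q_K(v)\bigr)\Psi(v)$ (with the normalization attached to $q_K$), the $N(F)\backslash N(\A)$–integral of $\psi\bigl(b\,q_K(v)\bigr)$ is $1$ if $q_K(v)=0$ and $0$ otherwise. Hence only the isotropic vectors $v$ survive, and the term $v=0$ drops out since $\phi\in\omega^0$ forces $\omega(g,h)\phi(0)=0$. Now the nonzero isotropic vectors are partitioned by the isotropic line they span; since $V^8_K$ is quasi-split of type $D_4$ it has positive Witt index, so $\overline{H}(F)$ acts transitively on isotropic lines with $\Stab_{\overline{H}}(\Span\{e_0\})=\overline{Q}$, and the set of such lines is $\overline{Q}(F)\backslash\overline{H}(F)$. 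Writing $v=a\,\gamma^{-1}e_0$ with $\gamma$ running over fixed coset representatives of $\overline{Q}(F)\backslash\overline{H}(F)$ and $a\in F^\times$, and using the identity $\omega(g,h)\phi(a\,\gamma^{-1}e_0)=\omega(g,\gamma h)\phi(a\,e_0)$, I would interchange the (absolutely convergent, for $Re\bk{s}\gg 0$) $\gamma$–sum with the $g$–integral:
\[
\theta(\phi,f)(h)=\sum_{\gamma\in\overline{Q}(F)\backslash\overline{H}(F)}\ \int_{T(F)N(\A)\backslash SL_2(\A)}\ \sum_{a\in F^\times}\omega(g,\gamma h)\phi(a\,e_0)\,f(g,s)\,dg .
\]
Finally, since $q_K(e_0)=0$ the function $g\mapsto\omega(g,h)\phi(e_0)f(g,s)$ is left $N(\A)$-invariant; substituting $g\mapsto\diag(a,a^{-1})g$, the powers of $\FNorm{a}$ produced by the Weil representation and by $f\in I_B\bk{\chi_K,s}$, together with $\chi_K(a)$, all disappear because $\FNorm{a}_{\A}=1$ and $\chi_K(a)=1$ for $a\in F^\times$, so the $a$–sum fills $T$ up and collapses the inner integral to $\int_{N(\A)\backslash SL_2(\A)}\omega(g,\gamma h)\phi(e_0)f(g,s)\,dg=F(\phi,f,s)(\gamma h)$. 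This yields $\theta(\phi,f)(h)=\sum_{\gamma}F(\phi,f,s)(\gamma h)$.

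It then remains to verify that $F(\phi,f,s)$ is a section of the degenerate principal series $I_{\overline{Q}}(s/3)$ with the normalization fixed in Subsection~\ref{Subsec:Notations}, so that $\sum_{\gamma}F(\phi,f,s)(\gamma h)=\E_{\overline{Q}}\bk{F(\phi,f,s),h,s/3}$, which via $p^\ast$ is the function written $\E_{Q}\bk{F(\phi,f,s),h,s/3}$ in the statement. Left $\overline{Q}(F)$-invariance is immediate from $\chi_K|_{F^\times}=1$ and the product formula. For the transformation under $\overline{Q}(\A)$ one writes $q\cdot e_0=\mu(q)e_0$ with $\mu:\overline{Q}\to\Gm$ and performs the substitution $g\mapsto\diag(\mu(q),\mu(q)^{-1})g$: the resulting power of $\FNorm{\mu(q)}$ is assembled from the Weil–representation scalar, from the modulus of $N(\A)\backslash SL_2(\A)$ under $\diag(\mu(q),\mu(q)^{-1})$, and from the $I_B\bk{\chi_K,s}$–transformation of $f$, and one checks it is exactly $\modf{\overline{Q}}(q)^{s/3+1/2}$. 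The factor $3$ relating the $SL_2$–parameter $s$ to the $\overline{Q}$–parameter $s/3$ is precisely the one appearing in the identity $FJ\bk{I_{Q}(s)}=I_{B}\bk{\chi_K,3s}$ established above; this is the consistency check I would use to pin down the normalization.

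The genuine work is concentrated in this last step: pinning down all the normalizations — the splitting of the Weil representation for the dual pair $(SL_2,\overline{H})$ attached to $q_K$, the Haar measure on $N(\A)\backslash SL_2(\A)$, and the modular character $\modf{\overline{Q}}$ — so that the induction parameter comes out to be precisely $s/3$. To a lesser extent, one must also make the two interchanges of summation and integration rigorous; both are valid for $Re\bk{s}\gg 0$ and rely essentially on the rapid decrease of $\theta(\phi)$ furnished by the hypothesis $\phi\in\omega^0$.
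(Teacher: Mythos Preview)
Your proposal is correct and follows essentially the same standard unfolding as the paper: unfold $\E_B$, integrate over $N(F)\lmod N(\A)$ to restrict the theta sum to isotropic $v$, kill $v=0$ by $\phi\in\omega^0$, use Witt transitivity to write the nonzero isotropic vectors as $\overline{Q}(F)\lmod\overline{H}(F)$ times the scalar $a\in F^\times$ (the paper phrases this as the single $\overline{H}(F)$-orbit on nonzero isotropic vectors together with $T_1(F)$), absorb the scalar into the $T(F)$-quotient to get $F(\phi,f,s)(\gamma h)$, and then check the $\overline{L}$-equivariance to identify the induction parameter as $s/3$. The only cosmetic difference is that the paper carries out the final equivariance check by writing down the three Schr\"odinger-model identities $\phi(t^{-1}e_0)=\omega(t^{-1},1)\FNorm{t}^{4}\chi_K(t)\phi(e_0)$, $f(g,s)=\chi_K(t)\FNorm{t}^{1+2s}f(t^{-1}g)$, $dg=\FNorm{t}^{-2}d(t^{-1}g)$ and multiplying, whereas you gesture at the same computation; your invocation of the identity $FJ\bk{I_Q(s)}=I_B\bk{\chi_K,3s}$ as a consistency check is a nice touch but not needed for the argument.
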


\begin{proof}
This follows by the standard unfolding technique.
\begin{align*}
& \integral{SL_2} \theta^{16}\bk{\phi}\bk{g,h} \E_B\bk{f,g,s} \,dg \\
& = \int\limits_{B\bk{F}\backslash SL_2\bk{\A}}  \sum_{v\in V_8^K\bk{F}} \omega\bk{g,h}\phi(v) f(g,s) dg \\
& = \int\limits_{T(F)N(\A)\backslash SL_2(\A)} \sum_{v\in V^8_K(F), q_K(v)=0} \omega\bk{g,h}\phi(v) f(g,s) dg .
\end{align*}

The group $\overline{H}\bk{F}$ acts on the set $\set{v\in V^8_K \mvert q_K\bk{v}=0}$.
There are two orbits: the zero orbit and the open orbit represented by $e_0$.
The contribution of the zero orbit vanishes since $\phi\in \omega^0$. 
Thus the integral above equals
\begin{align*}
& \int\limits_{T\bk{F}N\bk{\A}\lmod SL_2\bk{\A}} 
\sum_{\gamma\in \overline{Q}\bk{F}\backslash \overline{H}\bk{F}} \sum_{t_1\in T_1\bk{F}}
\omega\bk{g,t_1\gamma h} \phi\bk{e_0} f\bk{g,s}\, dg \\
& = \int\limits_{T\bk{F}N\bk{\A}\lmod SL_2\bk{\A}} 
\sum_{\gamma\in \overline{Q}\bk{F}\lmod \overline{H}\bk{F}} \sum_{t\in T\bk{F}}
\omega\bk{t^{-1} g,\gamma h}\phi\bk{e_0} f\bk{g,s}\, dg \\
& = \sum_{\gamma\in \overline{Q}\bk{F}\lmod \overline{H}\bk{F}}
\int\limits_{N\bk{\A}\lmod SL_2\bk{\A}} 
\omega\bk{g,\gamma h}\phi\bk{e_0} f\bk{g,s}\, dg=
\sum_{\gamma\in \overline{Q}\bk{F}\lmod \overline{H}\bk{F}} F\bk{\phi,f,s}\bk{\gamma h}
\end{align*}
as required.

It remains to check  that $F\bk{\phi,f,s}$ belongs to $I_{Q}(s/3)$.
Indeed, for $\bk{t,m}\in\overline{L}$ it holds that
\begin{align*}
& F\bk{\phi,f,s}\bk{\bk{t,m}h} =
\int\limits_{N\bk{\A}\lmod SL_2\bk{\A}} \omega\bk{g,\bk{t,m}h}\phi\bk{e_0} f\bk{g}\, dg \\
& = \int\limits_{N\bk{\A}\lmod SL_2\bk{\A}} \omega\bk{g,h}\phi\bk{t^{-1}e_0} f\bk{g}\, dg .
\end{align*}

By the formulas in the Schr\"odinger model one has 
\[
\begin{split}
& \phi\bk{t^{-1}e_0} = \omega\bk{t^{-1},1}\FNorm{t}^{4} \chi_K\bk{t} \phi\bk{e_0} \\
& f\bk{g,s} = \chi_K\bk{t}\FNorm{t}^{1+2s} f\bk{t^{-1}g} \\
& dg=\FNorm{t}^{-2} d\bk{t^{-1}g}
\end{split}
\]
and hence
\[
F\bk{\phi,f,s}\bk{\bk{t,m}g}=\FNorm{t}^{3+2s} F\bk{\phi,f,s}\bk{g} = \delta^{s/3+1/2}_Q\bk{t} F\bk{\phi,f,s}\bk{g}
\]
as required.
\end{proof}

\begin{Cor}
For $Re\bk{s}\gg 0$ and $\phi=\otimes \phi_\nu\in \omega^0, f=\otimes f_\nu\in I_B\bk{\chi_K,s}$ there is a factorization
\[
F\bk{\phi,f,s}\bk{h} = \prodl_\nu F_\nu\bk{\phi_\nu,f_\nu,s}\bk{h_\nu} ,
\]
where 
\[
F_\nu\bk{\phi_\nu,f_\nu,s}\bk{h} = \int\limits_{N\bk{F_\nu}\lmod SL_2\bk{F_\nu}} 
\omega_\nu\bk{g,h} \phi_\nu\bk{e_0} f_\nu\bk{g,s} \, dg .
\]
\end{Cor}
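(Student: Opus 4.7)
The statement is a factorization of a global adelic integral into an Euler product of local integrals, so the plan is a rather routine unfolding/factorization argument, with the only real substance being the justification that (i) the global integrand factors as a restricted tensor product of local data, (ii) the local integrals converge in the same right half plane, and (iii) at almost all places the local factors reduce to the standard local Gindikin--Karpelevich style input so that the infinite product makes sense.

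First I would fix factorizations of all the ingredients with respect to a restricted tensor product decomposition of $SL_2(\A)$. The Weil representation $\omega = \omega_{\psi,q_K}$ is defined with respect to a factorizable additive character $\psi = \otimes \psi_\nu$ and a factorizable quadratic form $q_K$ on $V^8_K(\A) = \prod'_\nu V^8_K(F_\nu)$, and by the construction of the Schr\"odinger model $\omega = \otimes'_\nu \omega_\nu$ with $\omega_\nu = \omega_{\psi_\nu,q_{K,\nu}}$. Next, the measure: fixing local Haar measures $dg_\nu$ on $N(F_\nu)\lmod SL_2(F_\nu)$ whose product (in the sense of a restricted product, where for unramified $\nu$ the hyperspecial maximal compact $K_\nu$ has volume $1$) equals the quotient Tamagawa-type measure on $N(\A)\lmod SL_2(\A)$ used to define $F(\phi,f,s)$.

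With these data in hand, for a pure tensor $\phi=\otimes\phi_\nu$ and a factorizable standard section $f=\otimes f_\nu$ of $I_B(\chi_K,s)$, the integrand splits as
\[
\omega(g,h)\phi(e_0)\,f(g,s) \;=\; \prod_\nu \omega_\nu(g_\nu,h_\nu)\phi_\nu(e_0)\,f_\nu(g_\nu,s).
\]
For $\mathrm{Re}(s)\gg 0$ the local integrals defining $F_\nu(\phi_\nu,f_\nu,s)(h_\nu)$ converge absolutely, since after parameterizing $N(F_\nu)\lmod SL_2(F_\nu)$ by the Iwasawa decomposition the integral in the torus variable $t$ is dominated by $\int |t|^{2s-c}\,d^\times t$ for a fixed constant $c$. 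A standard Fubini argument together with the product formula for the measures then gives
\[
F(\phi,f,s)(h) = \prod_\nu F_\nu(\phi_\nu,f_\nu,s)(h_\nu),
\]
provided the right hand side makes sense as an Euler product. The only thing to check for the latter is that at almost all places $F_\nu(\phi^0_\nu,f^0_\nu,s)(1)=1$ (or at least reduces to a convergent Gindikin--Karpelevich type factor). This is immediate at places where $\phi_\nu=\phi_\nu^0$ is the characteristic function of the standard lattice and $f_\nu=f_\nu^0$ is the normalized spherical section: the integrand is $K_\nu$-invariant, one unfolds by Iwasawa and the $t$-integral gives exactly the expected local $L$-factor contribution, while our normalization of the section cancels it.

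The mild technical point — which I would not consider a serious obstacle but is the only item that needs attention beyond formal unfolding — is consistency of measures and the compatibility of the local splittings $SL_2(F_\nu)\times \overline{H}(F_\nu)\to \widetilde{Sp_{16}}(F_\nu)$ with the global splitting used to define $\omega_{\psi,q_K}$. This follows from the construction of the splitting via $q_K$ and the product formula for the Weil index, so the local Weil representations $\omega_\nu$ really do tensor up to the global $\omega$. Once this is recorded, the factorization of $F(\phi,f,s)$ is formal.
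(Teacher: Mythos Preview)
Your proposal is correct and follows the natural approach. The paper states this result as a Corollary without proof, treating the factorization as immediate from the definition of $F(\phi,f,s)$ in the preceding Proposition together with the factorizability of $\omega$, $\phi$, $f$, and the measure; your write-up simply supplies the routine details the authors omit. One small correction: the unramified local value is not $1$ but $\zfun_\nu(2s+3)$ (this is computed in the Proposition immediately following the Corollary), so the Euler product converges for $Re(s)\gg 0$ rather than being a finite product, exactly as you anticipated in your parenthetical remark.
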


Note that the section $F\bk{\phi,f,s}$ is not standard, but holomorphic for $Re\bk{s}\gg 0$.
We shall define $F\bk{\phi,f,1/2}$ by analytic continuation.

\begin{Prop}
\begin{enumerate}
\item
For any place $\nu$, the map $F_\nu$ is holomorphic and non-zero for $Re\bk{s}>-3/2$. 
\item
Let $K_\nu$ be either a split or unramified quadratic extension of $F_\nu$. We fix the normalized spherical vectors $\phi_\nu^0\in \omega_\nu$ and $f^0_\nu$.
Then $F_\nu\bk{\phi^0_\nu,f^0_\nu,s}= \zeta_\nu\bk{2s+3}$. 
\item
For any place $\nu$ the image of $F_\nu: \omega_\nu\otimes I_B\bk{\chi_{K_\nu},1/2} \rightarrow I_{\overline{Q}_\nu}\bk{1/6}$ contains $I^0_{\overline{Q}_\nu}\bk{1/6}$. 
\end{enumerate}
\end{Prop}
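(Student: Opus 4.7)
The plan is to use the Iwasawa decomposition $SL_2(F_\nu) = N(F_\nu) A(F_\nu) K_\nu$ to unfold the integral defining $F_\nu$ into a Tate-type integral, and then to use $\overline{H}_\nu$-equivariance together with the definition of $I^0_{\overline{Q}_\nu}(1/6)$ as a module generated by the spherical vector in order to deduce part (3) from part (2).

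For parts (1) and (2), I would write $g = n(x)\,a(t)\,k$ with $a(t) = \operatorname{diag}(t,t^{-1})$, and apply the Schr\"odinger-model formulas $\omega_\nu(n(x),1)\phi(e_0) = \phi(e_0)$ (which uses that $e_0$ is isotropic so $q_K(e_0)=0$) and $\omega_\nu(a(t),1)\phi(v) = \chi_{K_\nu}(t)|t|^4 \phi(tv)$, together with the induction transformation $f_\nu(a(t)k,s) = \chi_{K_\nu}(t)|t|^{1+2s} f_\nu(k,s)$ and the measure decomposition $dg = |t|^{-2}\, d^\times t\, dk$ on $N(F_\nu)\backslash SL_2(F_\nu)$. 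Since $\chi_{K_\nu}^2 \equiv 1$, this unfolds to
\[
F_\nu(\phi_\nu,f_\nu,s)(h) = \int_{K_\nu} f_\nu(k,s) \left( \int_{F_\nu^\times} |t|^{2s+3} [\omega_\nu(k,h)\phi_\nu](t e_0)\, d^\times t \right) dk.
\]
For each fixed $k,h$ the function $v \mapsto [\omega_\nu(k,h)\phi_\nu](v)$ is Schwartz on $V^8_{K_\nu}$, so its restriction to the line $F_\nu e_0$ is Schwartz on $F_\nu$, and the inner Tate integral converges absolutely and defines a holomorphic function of $s$ for $\operatorname{Re}(s) > -3/2$; the outer integral over the compact group $K_\nu$ preserves holomorphy, giving (1). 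Non-vanishing at a given $s$ is obtained by choosing $\phi_\nu$ with $\phi_\nu(e_0) \ne 0$ and $f_\nu$ not identically zero on $K_\nu$.

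For (2), in the unramified setting take $\phi^0_\nu = \Id_{\mO_\nu^8}$ (the characteristic function of a self-dual lattice containing $e_0$) and $f^0_\nu$ the normalized spherical section satisfying $f^0_\nu(k,s) = 1$ on $K_\nu$. Then $\omega_\nu(k,1)\phi^0_\nu = \phi^0_\nu$ and $\phi^0_\nu(t e_0) = \Id_{\mO_\nu}(t)$, so the Tate integral collapses to $\sum_{n \ge 0} q_\nu^{-n(2s+3)} = \zeta_\nu(2s+3)$ under standard measure normalizations.

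For (3), the key observation is the $\overline{H}_\nu$-equivariance $F_\nu(\phi,f)(h h_0) = F_\nu(\omega_\nu(1,h_0)\phi, f)(h)$, which follows from $\omega_\nu(g,h h_0) = \omega_\nu(g,h)\omega_\nu(1,h_0)$ since $SL_2$ and $\overline{H}$ commute inside $Sp_{16}$. Hence the image of $F_\nu(\cdot,\cdot,1/2)$ is an $\overline{H}_\nu$-submodule of $I_{\overline{Q}_\nu}(1/6)$. At an unramified non-Archimedean $\nu$, part (2) yields $F_\nu(\phi^0_\nu,f^0_\nu,1/2) = \zeta_\nu(4)\, \widetilde{f}^{\,0}_{\nu,1/6}$, a nonzero scalar multiple of the spherical vector, which by definition generates $I^0_{\overline{Q}_\nu}(1/6)$. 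At ramified non-Archimedean and Archimedean places, I would produce a nonzero $K^{\overline{H}}_\nu$-fixed element of the image by averaging: choose $\phi$ to be $K^{\overline{H}}_\nu$-invariant under $\omega_\nu(1,\cdot)$ (obtained by averaging a generic Schwartz function over $K^{\overline{H}}_\nu$) and $f$ any section; the resulting $F_\nu(\phi,f,1/2)$ is automatically right-$K^{\overline{H}}_\nu$-invariant, hence a scalar multiple of $\widetilde{f}^{\,0}_{\nu,1/6}$, and one reads off non-vanishing of the scalar from the explicit Tate integral in part (1). The main obstacle will be this final non-vanishing check at Archimedean and ramified places; it requires exhibiting explicit test data $(\phi,f)$ for which the $K^{\overline{H}}_\nu$-averaged Tate integral does not vanish at $s = 1/2$, while the unramified case follows directly and cleanly from (2).
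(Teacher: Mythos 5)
Your treatment of parts (1) and (2) is correct and follows essentially the same route as the paper: unfold via the Iwasawa decomposition of $SL_2(F_\nu)$, use the Schr\"odinger-model and induction transformation formulas to reduce to the inner Tate integral
\[
L(\phi,s) = \int_{F_\nu^\times} |t|^{2s+3}\,\phi(te_0)\,d^\times t,
\]
read off holomorphy and non-vanishing for $\mathrm{Re}(s)>-3/2$ from the Schwartzness of $\phi$, and compute the unramified spherical value as $\zeta_\nu(2s+3)$. That is precisely what the paper does.

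For part (3), however, there is a genuine gap, and it is one you yourself flag at the end. Your averaging strategy at ramified and Archimedean places requires exhibiting explicit test data $(\phi,f)$ for which the $K^{\overline H}_\nu$-averaged Tate integral does not vanish at $s=1/2$, and you concede you have no argument for this. The paper sidesteps the entire issue by invoking the structure of the target module rather than constructing data. The correct dichotomy is not ``unramified vs.\ ramified/Archimedean'' but ``$K_\nu$ split vs.\ $K_\nu$ a field.'' If $K_\nu = F_\nu\times F_\nu$ (this includes all complex places), then $\chi_{K_\nu}$ is trivial, $I_B(\chi_{K_\nu},1/2) = I_B(\mathbf{1},1/2)$ is automatically spherical, and part (2) already produces a nonzero multiple of the spherical vector, which by definition generates $I^0_{\overline Q_\nu}(1/6)$. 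If $K_\nu$ is a field (finite places where $K$ is inert or ramified, and real places with $K_\nu = \C$), one appeals to \Cref{Lem:StructureofRepnofHbar}, which says that $I_{\overline Q_\nu}(1/6)$ is \emph{irreducible} in this case. Then your own correct observation that the image of $F_\nu(\cdot,\cdot,1/2)$ is an $\overline H_\nu$-submodule, combined with part (1)'s non-vanishing, forces the image to be all of $I_{\overline Q_\nu}(1/6) \supseteq I^0_{\overline Q_\nu}(1/6)$, with no explicit test data needed. So the missing ingredient in your proposal is precisely the irreducibility input from the earlier lemma; once you incorporate it, the unresolved non-vanishing check disappears.
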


\begin{Remark}
If $\nu$ is an Archimedean field then 
\[
\zeta_\nu\bk{s} = \piece{\pi^{-s/2} \Gamma\bk{s/2},& F_\nu=\R \\ \bk{2\pi}^{1-s}\Gamma\bk{s},& F_\nu=\C}.
\]
\end{Remark}

\begin{proof}
Recall that the section $f\bk{\cdot,s}$ is flat.
In other words the restriction to $K$ does not depend on $s$.
Using the Iwasawa decomposition $SL_2\bk{F_\nu} = N\cdot T\cdot\mathcal{K}$, where $\mathcal{K}$ is the maximal compact subgroup of $SL_2\bk{F_\nu}$, we can write
\[
F_\nu\bk{\phi_\nu,f_\nu,s}=\int\limits_{\mathcal{K}} L\bk{k\phi_\nu,s} f_\nu\bk{k}\, dk, 
\]
where
\[
L\bk{\phi,s} = \int\limits_{T\bk{F_\nu}} \omega\bk{t} \phi\bk{e_0} \chi_K\bk{t} \delta_B^{s+1/2}\bk{t} \delta_B^{-1}\bk{t}\, dt = \int\limits_{F_\nu^\times} \FNorm{t}^{2s+3} \phi\bk{te_0}\, d^\times t .
\]

Obviously, the operator $L\bk{\phi,s}$ is holomorphic and does not vanish for any $\nu$ and $Re\bk{s}>-3/2$. Moreover, $L\bk{\phi_\nu^0,s} = \zeta_\nu\bk{2s+3}$. 

Thus, the operator $F_\nu$ is also holomorphic in that region and if $I_B\bk{\chi_K,s}$ is an unramified representation with $f^0$ is a normalized spherical vector then
\[
F_\nu\bk{\phi_\nu^0,f_\nu^0,s} = \zeta_\nu\bk{2s+3} .
\]

If $K_\nu=F_\nu\times F_\nu$ then the representation $I_B\bk{\chi_{K_\nu},1/2} = I_B\bk{1,1/2}$ is spherical and the image of the spherical data is not zero.
If $K_\nu$ is a field  then $I_{\overline{Q}_\nu}\bk{1/6}$ is irreducible as shown in \Cref{Lem:StructureofRepnofHbar} and hence part $(1)$ implies part $(3)$. 
\end{proof}

It follows that $F\bk{\phi,f,s}$ can be analytically extended with the extension being holomorphic at $s=1/2$ and its image contains $I^0_Q\bk{1/6}$.

\section{See-Saw Diagram}
In this section we prove the \Cref{main}.
So far, we have the theta lifts defined for the dual pair $SL_2\bk{\A}\times \overline{H}\bk{\A}$ as well as $\TSL\bk{\A}\times G_2\bk{\A}$.
They fit into a see-saw pair.
\begin{figure*}[h!]
\[
\xymatrix{
\TSL\times\TSL \ar@{-}[rd] &
\overline{H} \ar@{-}[ld] \\
SL_2 \ar@{^{(}->}[u] &
G_2\times SO\bk{V_K^1} \ar@{_{(}->}[u]
}
\]
{\bf F{\scriptsize IGURE}}: See-saw diagram
\end{figure*}

Using the see-saw duality we are able to prove the following key statement.

\begin{Prop}
Let $\pi$ be a cuspidal representation of $G_2\bk{\A}$ and $I_{\overline{Q}}\bk{1/6}$ be the induced representation of $\overline{H}=\overline{H}_{F\times K}$, where $K$ is a field.

Assume that for some $\varphi\in \pi$ and $f\in I_{\overline{Q}}^0\bk{1/6}$ it holds that
\[
\integral{G_2}\varphi\bk{g} \E_{\overline{Q}}\bk{g,f,1/6}\, dg \neq 0.
\]
Then $RS_\psi\bk{\pi}\neq 0$.
\end{Prop}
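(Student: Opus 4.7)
The strategy is to rewrite the hypothesis using the identification of $\E_{\overline{Q}}(\cdot,f,1/6)$ as a regularized theta lift from $SL_2$ (established in the previous section) and then to apply see-saw duality to transfer the resulting period to one involving the Rallis--Schiffmann lift of $\varphi$ directly. Since $K$ is a field, $\E_B(\hat f,\cdot,s)$ is holomorphic at $s=1/2$, and by the surjectivity of the map $F$ onto $I^0_Q(1/6)\cong I^0_{\overline{Q}}(1/6)$, we may write $f=p^\ast F(\phi,\hat f,1/2)$ for suitable $\phi$ and a standard section $\hat f\in I_B(\chi_K,s)$. The formula of the previous section then turns the hypothesis into the non-vanishing of
\[
I(\varphi,\phi,\hat f):=\integral{G_2}\varphi(g)\,\theta^{reg}_{\psi,q_K}\bk{\phi,\E_B(\hat f,\cdot,1/2)}(g)\,dg.
\]

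For the see-saw step, I would use the orthogonal decomposition $V^8_K=V^7\oplus V^1_K$, where $V^7$ is the standard representation of $G_2$ and $V^1_K$ is one-dimensional of discriminant corresponding to $\chi_K$. This realizes $G_2\times SO(V^1_K)\hookrightarrow\overline{H}$ (with $SO(V^1_K)$ trivial) and factorises the Weil representation as $\omega_{\psi,q_K}\res{\TSL\times G_2}\cong \omega_{\psi,q^{G_2}}\otimes\omega_{\psi,q^1_K}$, the tensor of two genuine $\TSL$-representations, which descends to $SL_2$. Taking $\phi=\phi_1\otimes\phi_2$, moving the regularizing element $z\in\mathcal Z(\mathfrak{sl}_2)$ past the $[G_2]$-integration and exchanging the order of integration transforms $I(\varphi,\phi,\hat f)$ into
\[
\frac{1}{P_z(1/2)}\int^{reg}_{SL_2(F)\lmod SL_2(\A)}\E_B(\hat f,h,1/2)\,RS_\psi(\varphi,\phi_1)(h)\,\theta_{q^1_K}(\phi_2)(h)\,dh,
\]
where $\theta_{q^1_K}(\phi_2)$ is the theta series attached to the one-dimensional orthogonal space $V^1_K$, an automorphic form on $\TSL$ belonging to $\mA_{\chi_K}$. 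The non-vanishing of the left-hand side then forces $RS_\psi(\varphi,\phi_1)\neq 0$ for some choice of data, whence $RS_\psi(\pi)\neq 0$.

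The main obstacle I anticipate is the rigorous justification of the swap of integrals in the presence of the regularizing operator $z$: one must verify that $z$ can be pulled past the $[G_2]$-integration, that the resulting regularized integral over $[SL_2]$ converges, and that no boundary terms appear. A companion technicality is the compatibility of cocycles --- namely, that the splittings of $SL_2\times\overline{H}\hookrightarrow\widetilde{Sp_{16}}$ and $\TSL\times G_2\hookrightarrow\widetilde{Sp_{14}}$ used on the two sides of the see-saw are compatible with the factorisation $\omega_{\psi,q_K}\cong \omega_{\psi,q^{G_2}}\otimes\omega_{\psi,q^1_K}$, so that the product of two genuine $\TSL$-forms on the right truly matches the non-genuine integrand produced on the $(SL_2,\overline{H})$ side. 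Once these points are settled the conclusion is immediate.
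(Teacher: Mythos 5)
Your overall strategy---identify $\E_{\overline{Q}}(\cdot,f,1/6)$ with a theta lift from $SL_2$, then apply the see--saw $\bigl(\TSL\times\TSL,\ \overline H\bigr)$ vs.\ $\bigl(SL_2,\ G_2\times SO(V^1_K)\bigr)$ via the decomposition $V^8_K=V^7\oplus V^1_K$---is the same as the paper's. But you have loaded the argument with an unnecessary and genuinely dangerous complication: you insist on the \emph{regularized} theta lift $\theta^{\mathrm{reg}}$ with the element $z\in\mathcal Z(\mathfrak{sl}_2)$, and you correctly flag the resulting obstacle (moving $z$ past the $[G_2]$-integration, convergence of the regularized $[SL_2]$-integral, boundary terms). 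You never resolve it, and it is not clear you can do so cleanly.

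The paper avoids all of this. The map $F$ is defined on $\omega^0\otimes I_B(\chi_K,s)$, where $\omega^0=\omega^0_{\nu_0}\otimes(\otimes_{\nu\neq\nu_0}\omega_\nu)$ and $\omega^0_{\nu_0}=\Ker\overline T$, and the local surjectivity result shows that the image of $F$ already contains all of $I^0_{\overline Q}(1/6)$. So one may take $\phi\in\omega^0$ from the start. This is the decisive choice: for such $\phi$ the theta function $\theta_{\psi,q_K}(\phi)(g,h)$ is rapidly decreasing in $g\in[SL_2]$, the Eisenstein series $\E_B(f,\cdot,1/2)$ is of moderate growth (it is holomorphic at $s=1/2$ since $K$ is a field), and $\varphi$ is cuspidal hence rapidly decreasing on $[G_2]$. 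The double integral
\[
\integral{G_2}\varphi(h)\integral{SL_2}\theta_{\psi,q_K}(\phi)(g,h)\,\E_B(f,g,1/2)\,dg\,dh
\]
then converges absolutely, and Fubini interchanges the order of integration with no regularization whatsoever. The operator $z$ never appears, and the ``main obstacle'' you worry about simply does not arise. After the swap one writes $\phi=\sum_i\phi'_i\otimes\phi''_i$ (note: a finite sum of pure tensors, not a single pure tensor $\phi_1\otimes\phi_2$ as you wrote --- a general element of $\omega^0$ need not factor), the theta function factors accordingly, and non-vanishing of the outer $[SL_2]$-integral forces some inner integral $\integral{G_2}\varphi(h)\,\theta_{\psi,q}(\phi'_i)(g,h)\,dh$ to be nonzero, which is precisely the statement $RS_\psi(\pi)\neq 0$.

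So: same see--saw, same decomposition, same conclusion; but you have re-imported the regularization machinery at exactly the step where the construction was designed to make it unnecessary, and left the resulting analytic issues unaddressed. Replacing your $\theta^{\mathrm{reg}}$-based rewriting with the choice $\phi\in\omega^0$ (justified by the surjectivity of $F$ onto $I^0_{\overline Q}(1/6)$) turns your sketch into the paper's proof.
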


\begin{proof}
Since the subrepresentation of $I^0_{\overline{Q}}\bk{1/6}$ is generated by the spherical vector, $I^0_{\overline{Q}}\bk{1/6}$ is contained in the image of $F$.
Hence, there exist $\phi\in \omega^0$ and $f \in I_B\bk{\chi_K,1/2}$ such that 
\[
\integral{G_2} \varphi\bk{h}\,\E_{\overline{Q}}\bk{F\bk{\phi,f,1/2},h,1/6}\, dh\neq 0 .
\]
In particular,
\[
\integral{G_2} \varphi\bk{h} \integral{SL_2}
\theta_{\psi,q_K}\bk{\phi} \bk{g,h} \E_B\bk{f,g,1/2}\, dg\, dh\neq 0
\]
for some choice of data.
Since $\varphi$ is rapidly decreasing on $G_2$ and $\theta(\phi)$ is rapidly decreasing on $SL_2$, the integral converges absolutely and hence we can change the order of integration, so that
\begin{equation}
\label{see:saw}
\integral{SL_2} \integral{G_2} \varphi\bk{h}\theta_{\psi,q_K}\bk{\phi}\bk{g,h} dh \,\E_B\bk{f,g,1/2} \, dg\neq 0
\end{equation}
for some choice of data.

Decompose $\bk{V_K^8, q_K} = \bk{V^7,q} \oplus \bk{V^1_K, q^1_K}$, where $V^1_K$ is a one-dimensional quadratic space of discriminant $K$ and $V^7$ has dimension $7$ and discriminant $1$.
Then
\[
\omega^{16}_{\psi,q_K}\simeq \omega^{14}_{\psi,q^7}\otimes \omega^2_{\psi,q^1_K} .
\]
Thus the function $\phi\in \omega^0$ can be written as $\sum \phi'_i\otimes\phi''_i$, where $\phi_i\in \omega^{14}$ and $\phi''_i\in \omega_K^2$ respectively.
So
\[
\theta_{\psi,q_K}\bk{\phi}\bk{g,h} = \sum \theta_{\psi,q}\bk{\phi'_i} \bk{g,h} \theta_{\psi,q^1_K}\bk{\phi''_i}\bk{g}
\]
and the integral in the LHS of \Cref{see:saw} equals 
\[
\integral{SL_2} \sum_i \bk{\integral{G_2} \varphi\bk{h}
\theta_{\psi,q}\bk{\phi'_i} \bk{g,h} \,dh} \theta_{\psi,q^1_K}\bk{\phi''_i}\bk{g} \E_B\bk{f,g,1/2} \, dg\neq 0.
\]

In particular, for some $i$, the inner integral is not zero which means that $RS_\psi\bk{\pi}\neq 0$ as required.
\end{proof}

Now it is easy to prove \Cref{main}.

If $\Lfun^S\bk{s,\pi,st}$ has a pole at $s=2$ then either the wave front $\widehat{F}\bk{\pi}$ consists of $F\times F\times F$ only or it contains the algebra $F\times K$, where $K$ is a quadratic 
field extension.
In the first case, the pole at $s=2$ is double by \Cref{FC}.
In this case $RS_\psi\bk{\pi}\neq 0$ and in fact, $\pi$ is contained in $\mathcal A_{\chi_0}$, where $\chi_0$ is the trivial character.

In the second case, we conclude that there exist $\varphi\in\pi$ and $f\in I_P\bk{s}$ such that
\begin{equation}
\integral{G_2}\varphi\bk{h} Res_{s=3/10} \E_P\bk{f,h,s} \, dh\neq 0
\end{equation}
or, by \Cref{Cor: Siegel-Weil Identity}, equivalently
\begin{equation}
\integral{G_2}\varphi\bk{h} \E_Q\bk{A_{w[2342]}f,h,1/6} \, dh = 
\integral{G_2}\varphi\bk{h} \E_{\overline{Q}}\bk{p^\ast\bk{A_{w[2342]}f},h,1/6} \, dh
\neq 0 .
\end{equation}

Since $A_{w[2342]}f$ belongs to $I_Q^0\bk{1/6}$, one has $p^\ast\bk{A_{w[2342]}f}\in I^0_{\overline{Q}}\bk{1/6}$ and now the result follows from the previous proposition.

\appendix
\setcounter{secnumdepth}{0}
\section{Appendix. The Normalized Eisenstein Series}
\label{Appendix-EntirenessOfNormalizedSeries}
Assume that $G$ is a quasi-split, simply connected and simple group.
Let $B$ a Borel subgroup of $G$ with maximal torus $T$ containing a maximal split torus $T_S$.
Let $\Phi$ denote the relative root system of $G$ with respect to $\bk{B,T_S}$ with relative Weyl group $W=W\bk{G,T_S}$.
Let $\Phi^{+}$ denote the associated the associated set of positive roots of $G$ with the set of simple roots $\Delta$.
The Weyl group $W$ is generated by the simple reflections $w_\alpha$ with $\alpha\in\Delta$.
For any $\alpha\in\Phi^{+}$ we denote by $F_\alpha$ the field of definition of the root $\alpha$.
Finally, we denote the space $X^\ast_{nr}\bk{T\bk{\A}}$ of unramified characters by $\mathfrak{a}_\C^\ast$.

We consider the Eisenstein series $\E_B\bk{\lambda,f^0_\lambda,g}$ corresponding to the normalized spherical section $f^0_\lambda\in\Ind_B^G\lambda$ (normalized induction).
We recall here that $f^0$ is a standard section.

Let
\[
\E_B^\sharp\bk{\lambda,g} = 
\coset{\prodl_{\alpha\in\Phi^+} \zfun_{F_\alpha}\bk{\gen{\lambda,\check{\alpha}}+1} l_\alpha^+\bk{\lambda} l_\alpha^-\bk{\lambda}}
\E_B\bk{\lambda,f^0_\lambda,g} ,
\]
where
\[
l_\alpha^{\pm}\bk{\lambda} = \gen{\lambda,\check{\alpha}}\pm 1 .
\]

\begin{Prop}
The normalized Eisenstein series $\E_B^\sharp\bk{\lambda,g}$ is entire and $W$-invariant.
\end{Prop}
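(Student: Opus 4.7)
The plan combines the Gindikin--Karpelevich constant-term formula with the functional equation of the completed zeta to get $W$-invariance, and then uses $W$-invariance together with a rank-one reduction to deduce entirety. Set
\[
N(\lambda) := \prod_{\alpha\in\Phi^+} \zfun_{F_\alpha}\bk{\gen{\lambda,\check{\alpha}}+1}\, l_\alpha^+(\lambda)\, l_\alpha^-(\lambda),
\]
so $\E_B^\sharp(\lambda,g) = N(\lambda)\,\E_B(\lambda,f^0_\lambda,g)$.

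For $W$-invariance I would start from the functional equation of the Eisenstein series on the spherical section: the Gindikin--Karpelevich formula $M(w,\lambda)f^0_\lambda = J(w,\lambda)\,f^0_{w\cdot\lambda}$ with
\[
J(w,\lambda) = \prodl_{\substack{\alpha>0\\ w^{-1}\alpha<0}} \frac{\zfun_{F_\alpha}\bk{\gen{\lambda,\check{\alpha}}}}{\zfun_{F_\alpha}\bk{\gen{\lambda,\check{\alpha}}+1}}
\]
gives $\E_B\bk{w\cdot\lambda, f^0_{w\cdot\lambda}, g} = J(w,\lambda)^{-1}\,\E_B\bk{\lambda, f^0_\lambda, g}$. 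I would then verify the matching identity $N(w\cdot\lambda) = J(w,\lambda)\, N(\lambda)$ by substituting $\beta = w^{-1}\alpha$ in the product defining $N(w\cdot\lambda)$, splitting the resulting range according to the sign of $w\beta$, using the invariance of $l_\beta^+(\lambda)\,l_\beta^-(\lambda) = \gen{\lambda,\check\beta}^2 - 1$ under $\check\beta\mapsto-\check\beta$, and applying $\zfun_{F_\beta}(1-\gen{\lambda,\check\beta}) = \zfun_{F_\beta}(\gen{\lambda,\check\beta})$ for those $\beta>0$ with $w\beta<0$; these are exactly the factors producing the numerator of $J(w,\lambda)$. Combining the two displays yields $\E_B^\sharp(w\cdot\lambda,g) = \E_B^\sharp(\lambda,g)$.

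For entirety, observe that $\E_B(\lambda,f^0_\lambda,g)$ converges absolutely on the strict positive chamber $C^+\subset\mathfrak{a}_\C^\ast$, where $N(\lambda)$ is also holomorphic (the arguments of the $\zfun_{F_\alpha}(\gen{\lambda,\check\alpha}+1)$ are bounded away from $0$ and $1$). Hence $\E_B^\sharp$ is holomorphic on $C^+$, and by the $W$-invariance just shown it extends to a holomorphic function on $W\cdot C^+$, whose complement is contained in the union of the reflecting hyperplanes $H_\alpha = \set{\lambda : \gen{\lambda,\check\alpha}=0}$ as $\alpha$ runs over $\Phi$. By Hartogs' extension theorem it suffices to rule out a codimension-one pole along each $H_\alpha$, and by $W$-invariance one may reduce to the case of a simple root $\alpha$.

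For a simple $\alpha$, I would argue by induction in stages through the standard parabolic $P_\alpha\supset B$: this expresses $\E_B^\sharp$ as a sum over $P_\alpha\lmod G$ of values of the rank-one normalized Eisenstein series attached to $M_\alpha\simeq\SL_2(F_\alpha)$. In the rank-one case, the constant term takes the form $f^0_s + \tfrac{\zfun_{F_\alpha}(s)}{\zfun_{F_\alpha}(s+1)} f^0_{-s}$, and multiplication by $\zfun_{F_\alpha}(s+1)(s^2-1)$ produces a function holomorphic at $s=0$: the two summands each contribute a simple pole there, but these cancel thanks to the first-order vanishing $f^0_s - f^0_{-s} = O(s)$. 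Pulling this cancellation back through induction in stages, the potentially singular part of $\E_B^\sharp$ along $H_\alpha$ is seen to cancel in the pairing of the $W$-sum terms indexed by $w$ and $w_\alpha w$, giving holomorphy on $H_\alpha$. The main obstacle is this final step: executing the pairwise $(w,w_\alpha w)$ cancellation uniformly, since one must track how the factors of $N(\lambda)$ restrict on the wall and match the polar data coming from the rank-one computation; the bookkeeping is sensitive to the fact that different walls may be conjugate under $W$ without being conjugate as hyperplanes in the root system.
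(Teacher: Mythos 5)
Your $W$-invariance argument is the same as the paper's: compute $N(w\cdot\lambda)/N(\lambda)$ via the substitution $\beta=w^{-1}\alpha$, use the $w_\beta$-invariance of $l_\alpha^+l_\alpha^-$ and the functional equation of the completed zeta, and match against the Gindikin--Karpelevich factor $J(w,\lambda)$; this part is fine.

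The entirety argument has a genuine gap, and it is in the step you are most confident about. The claim that ``the complement of $W\cdot C^+$ is contained in the union of the reflecting hyperplanes $H_\alpha$'' is false. $C^+$ is a tube domain over a real chamber, and $W\cdot C^+$ is a tube over the union of open real chambers $\set{\lambda:\abs{\mathrm{Re}\gen{\lambda,\check\alpha}}>c}$; its complement has real codimension one, not complex codimension one, and in particular is not contained in any locally finite union of complex hyperplanes. (Already for $SL_2$: $W\cdot C^+=\set{\abs{\mathrm{Re}(s)}>1}$, whose complement is the full strip $\abs{\mathrm{Re}(s)}\le 1$, not just $\set{s=0}$.) So $W$-invariance plus holomorphy on $C^+$ does not, by itself, reduce you to ruling out poles along the $H_\alpha$. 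To argue as you intend you need to first invoke the meromorphic continuation of $\E_B$ and read off from Gindikin--Karpelevich the complete list of candidate polar hyperplanes; these are the three families $H_\alpha^\epsilon=\set{\gen{\lambda,\check\alpha}=\epsilon}$ with $\epsilon\in\set{-1,0,1}$, not only $\epsilon=0$. Your rank-one discussion implicitly encounters $s=\pm1$ (the factor $s^2-1$ is there precisely to kill them), but your reduction never states that $H_\alpha^{\pm1}$ have to be examined, and the mechanism that kills them (zeros of $l_\alpha^\pm$ in $N(\lambda)$, a codimension-one cancellation against a single term) is quite different from the mechanism at $H_\alpha^0$ (a pairwise $(w,w_\alpha w)$ cancellation within the constant-term sum). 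You also do not carry out the induction-in-stages step you propose, and you flag it yourself as the main obstacle.

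The paper sidesteps all of this by passing immediately to the constant term: entirety of $\E_B^\sharp$ is equivalent to entirety of its constant term along $B$, which by Gindikin--Karpelevich is an explicit finite sum $L(\lambda)\sum_{w}F_w(\lambda)f^0_{w^{-1}\lambda}$. The candidate polar locus is then visibly $\bigcup_{\alpha,\epsilon}H_\alpha^\epsilon$; on $H_\alpha^{\pm1}$ minus a codimension-two set $X$ each $F_w$ has at worst a simple pole annihilated by the zero of $l_\alpha^\mp$ in $L$; on $H_\alpha^0\setminus X$ (reduced to simple $\alpha$ by $W$-invariance) one checks $\Res_{H_\alpha^0}F_w\,f^0_{w^{-1}\lambda}=-\Res_{H_\alpha^0}F_{w_\alpha w}\,f^0_{(w_\alpha w)^{-1}\lambda}$; finally Hartogs removes $X$. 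That is cleaner and avoids both the false topological claim and the unexecuted rank-one reduction.
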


\begin{proof}
We start by proving that $\E_B^\sharp\bk{\lambda,g}$ is indeed $W$-invariant.
It suffice to prove it for a simple reflection $w_\beta$ for some $\beta\in\Delta$ since $W$ is generated by these reflections.

Indeed, applying the functional equation (\cite[IV.1.10]{MR1361168})
\[
\E_B\bk{\lambda,f^0_\lambda,g} = \E_B\bk{w_\beta\cdot\lambda, M\bk{w_\beta,\lambda}f^0_{\lambda},g},
\]
the fact that
\begin{align*}
& \prodl_{\alpha\in\Phi^+} l_\alpha^+\bk{w_\beta\cdot\lambda} l_\alpha^-\bk{w_\beta\cdot\lambda} =
\prodl_{\alpha\in\Phi^+} l_\alpha^+\bk{\lambda} l_\alpha^-\bk{\lambda} \\
& \prodl_{\alpha\in\Phi^+} \zfun_{F_\alpha}\bk{\gen{w_\beta\cdot\lambda,\check{\alpha}}+1} = 
\frac{\zfun_{F_\beta}\bk{-\gen{\lambda,\check{\beta}}+1}}{\zfun_{F_\beta}\bk{\gen{\lambda,\check{\beta}}+1}} \prodl_{\alpha\in\Phi^+} \zfun_{F_\alpha}\bk{\gen{\lambda,\check{\alpha}}+1}
\end{align*}
and the functional equation
\[
\zfun_{F_\alpha}\bk{s} = \zfun_{F_\alpha}\bk{1-s} \quad \forall \alpha\in\Phi^{+}
\]
yields	
\begin{align*}
& \E_B^\sharp\bk{w_\beta\cdot\lambda,g} 
= \coset{\prodl_{\alpha\in\Phi^+} \zfun_{F_\alpha}\bk{\gen{w_\beta\cdot\lambda,\check{\alpha}}+1} l_\alpha^+\bk{w_\beta\cdot\lambda} l_\alpha^-\bk{w_\beta\cdot\lambda}}
\E_B\bk{w_\beta\cdot\lambda,f^0_{w_\beta\cdot\lambda},g} \\
& = \frac{\zfun_{F_\beta}\bk{-\gen{\lambda,\check{\beta}}+1}}{\zfun_{F_\beta}\bk{\gen{\lambda,\check{\beta}}+1}} \coset{\prodl_{\alpha\in\Phi^+} \zfun_{F_\alpha}\bk{\gen{w_\beta\cdot\lambda,\check{\alpha}}+1} l_\alpha^+\bk{\lambda} l_\alpha^-\bk{\lambda}} \E_B\bk{\lambda,M\bk{w_\beta,w_\beta\cdot\lambda}f^0_{w_\beta\cdot\lambda},g} \\
& = \frac{\zfun_{F_\beta}\bk{-\gen{\lambda,\check{\beta}}+1}}{\zfun_{F_\beta}\bk{\gen{\lambda,\check{\beta}}+1}} \coset{\prodl_{\alpha\in\Phi^+} \zfun_{F_\alpha}\bk{\gen{w_\beta\cdot\lambda,\check{\alpha}}+1} l_\alpha^+\bk{\lambda} l_\alpha^-\bk{\lambda}} 
\frac{\zfun_{F_\beta}\bk{-\gen{\lambda,\check{\beta}}}}{\zfun_{F_\beta}\bk{-\gen{\lambda,\check{\beta}}+1}} \E_B\bk{\lambda,f^0_{\lambda},g} \\
& = \coset{\prodl_{\alpha\in\Phi^+} \zfun_{F_\alpha}\bk{\gen{\lambda,\check{\alpha}}+1} l_\alpha^+\bk{\lambda} l_\alpha^-\bk{\lambda}}
\E_B\bk{\lambda,f^0_\lambda,g} = \E_B^\sharp\bk{\lambda,g} .
\end{align*}

We now turn to prove the $\E_B^\sharp\bk{\lambda,g}$ is entire.
It follows from the general theory of Eisenstein series that $\E_B^\sharp\bk{\lambda,g}$ is entire if and only if its constant term along $B$ is entire.
We recall that
\[
\E_B\bk{\lambda,t}_{T} = \suml_{w\in W} \bk{\prodl_{\stackrel{\alpha\in\Phi^+}{w^{-1}\cdot\alpha\notin\Phi^+}} \frac{\zfun_{F_\alpha}\bk{\gen{\lambda,\check{\alpha}}}}{\zfun_{F_\alpha}\bk{\gen{\lambda,\check{\alpha}}+1}}} f^0_{w^{-1}\cdot\lambda}\bk{t} .
\]
We write
\[
F\bk{\lambda,t} = \E_B^\sharp\bk{\lambda,t}_{T_H} = L\bk{\lambda} \suml_{w\in W} F_w\bk{\lambda} f^0_{w^{-1}\cdot\lambda}\bk{t},
\]
where
\[
L\bk{\lambda} = \prodl_{\alpha\in\Phi^+} l_\alpha^+\bk{\lambda} l_\alpha^-\bk{\lambda}
\]
and
\[
F_w\bk{\lambda} = \bk{\prodl_{\stackrel{\alpha\in\Phi^+}{w^{-1}\cdot\alpha\in\Phi^+}} \zfun_{F_\alpha}\bk{\gen{\lambda,\check{\alpha}}+1}} \bk{\prodl_{\stackrel{\alpha\in\Phi^+}{w^{-1}\cdot\alpha\notin\Phi^+}} \zfun_{F_\alpha}\bk{\gen{\lambda,\check{\alpha}}}} 
\]

While $L\bk{\lambda}$ is entire, $F_w\bk{\lambda,t}$ can possibly have poles along the hyper-planes
\[
H_\alpha^\epsilon = \set{\lambda\in\mathfrak{a}_\C^\ast \mvert \gen{\lambda,\check{\alpha}} = \epsilon}, \quad \alpha\in\Phi^+, \quad \epsilon=-1,0,1 .
\]
We show that these poles cancel, either with each other or with the zeros of $L\bk{\lambda}$.
Let
\[
X = \bigcup_{\stackrel{\epsilon,\epsilon'\in\set{-1,0,1}}{\alpha,\alpha'\in\Phi^{+}, \alpha\neq\alpha'}} \bk{H_{\alpha}^{\epsilon} \cap H_{\alpha'}^{\epsilon'}} ,\qquad
Y = \bigcup_{\stackrel{\epsilon\in\set{-1,0,1}}{\alpha\in\Phi^+}} H_{\alpha}^{\epsilon} .
\]
The set $X$ is a closed subset of $\mathfrak{a}_\C^\ast$ of codimension 2.
We prove that $F\bk{\lambda,t}$ is holomorphic on $\mathfrak{a}_\C^\ast$.

For any $w\in W$ and $\alpha\in\Phi^{+}$ the poles of $F_w\bk{\lambda}$ along $H_\alpha^{\pm 1}\setminus X$ cancel by the zeros of $L\bk{\lambda}$ along these hyper-planes and since
\[
F\bk{\lambda,t} =  \suml_{w\in W} L\bk{\lambda} F_w\bk{\lambda} f^0_{w^{-1}\cdot\lambda}\bk{t}
\]
so is $F\bk{\lambda,t}$.

Fix a simple root $\alpha\in \Delta$.
We prove that $F\bk{\lambda,t}$ is holomorphic along $H_\alpha^0\setminus X$.
This is enough as $F\bk{\lambda,t}$ is $W$-invariant and all short roots are $W$-conjugate and so are all the long roots.
The function $F_w\bk{\lambda}$ admits a simple pole along $H_\alpha^0\setminus X$ for any $w\in W$.

On the other hand, we note that for all $\lambda\notin Y$
\[
F_{w_\alpha w} \bk{\lambda} =  F_w\bk{w_\alpha^{-1}\cdot \lambda} .
\]
On the other hand, one checks that
\[
\Res_{H_\alpha^0} F_w\bk{w_\alpha^{-1}\cdot \lambda} = \Res_{H_\alpha^0} F_w\bk{\lambda} .
\]
It follows that
\[
\Res_{H_\alpha^0} F_w\bk{\lambda} f^0_{w^{-1}\cdot\lambda}\bk{t} = 
- \Res_{H_\alpha^0} F_{\bk{w_\alpha w}}\bk{\lambda} f^0_{\bk{w_\alpha w}^{-1}\cdot\lambda}\bk{t} 
\]
and hence the pole along $H_\alpha^0$ cancels.

We proved that $\E_B^\sharp\bk{\lambda,t}_{T}$ is holomorphic on $\mathfrak{a}_\C^\ast\setminus X$.
By Hartogs' theorem, \cite[Theorem 2.3.2]{MR1045639}, $\E_B^\sharp\bk{\lambda,t}_{T}$ is entire and so is $\E_B^\sharp\bk{\lambda,t}$.
\end{proof}

\bibliographystyle{alpha}
\bibliography{../bib}

\end{document}